\documentclass[12pt]{amsart}
\usepackage{latexsym,amssymb,amsmath,amsthm}
\usepackage{comment,float,graphicx,plain}
\newtheorem{thm}{Theorem}

\theoremstyle{definition}

\setlength{\parindent}{0cm}
\def\tcat{{\rm tcat}}
\def\cat{{\rm cat}}

\def\crit{{\rm crit}}
\newcommand{\C}{\mathbb{C}}   
\def\G{\mathcal{G}} \def\A{\mathcal{A}} \def\H{\mathcal{H}} \def\U{\mathcal{U}} 
\def\O{\mathcal{O}} \def\P{\mathcal{P}} \def\B{\mathcal{B}} \def\C{\mathcal{C}}
 \def\dim{\rm{dim}} \def\E{\mathcal{E}}

\title{A notion of graph homeomorphism}
\author{Oliver Knill}
\date{January 12, 2014}
\address{
        Department of Mathematics \\
        Harvard University \\
        Cambridge, MA, 02138
        }
\subjclass{Primary: 05C75, 54A99, 57M15, Secondary: 55M10, 55M20  }
\keywords{Graph theory, Dimension, Topology on graphs, Homotopy}


\begin{document}
\maketitle
\begin{abstract}
We introduce a notion of graph homeomorphisms which  uses the concept
of dimension and homotopy for graphs. It preserves the dimension of a subbasis,
cohomology and Euler characteristic. Connectivity and homotopy look as in classical topology.
The Brouwer-Lefshetz fixed point leads to the following discretiszation of the 
Kakutani fixed point theorem: any graph homeomorphism $T$ with nonzero Lefschetz 
number has a nontrivial invariant open set which is fixed by $T$. 
\end{abstract}

\section{The definition}

A classical topology $\O$ on the vertex set $V$ of a finite simple graph $G=(V,E)$ 
is called a {\bf graph topology} if there is a sub-base $\B$ of $\O$ consisting of 
contractible subgraphs such that the intersection of any
two elements in $\B$ satisfying the {\bf dimension assumption} 
$\dim(A \cap B) \geq {\rm min}(\dim(A),\dim(B))$ 
is contractible, and every edge is contained in some $B \in \B$. 
We ask the {\bf nerve graph} $\G$ of $\B$ to be homotopic to $G$, where 
$\G = (\B,\E)$ has edges $\E$ consisting of all pairs $(A,B) \in \B \times \B$ for which
the dimension assumption is satisfied. \\

The {\bf dimension} of $G$ \cite{elemente11,randomgraph} is defined as 
$\dim(G) = \frac{1}{|V|} \sum_{x \in V} (1+\dim(S(x)))$
with the induction assumption that $\dim(\emptyset)=-1$ 
and that $S(x)$ is the {\bf unit sphere graph} of a vertex $x$, the subgraph of $G$
generated by all vertices attached to $x$. For a subgraph $H=(W,F)$ of $G$, define the
{\bf relative topology} $\dim_G(H) = \frac{1}{|W|} \sum_{x \in W} \dim(S(x))$
and especially $\dim(x)=\dim_G(x)=\dim(S(x))$ for $x \in V$. 
In the requirement for $\B$ we have invoked dimensions $\dim(A),\dim(B)$ and not 
relative dimensions $\dim_G(A), \dim_G(B)$. \\

A topology $\O$ is {\bf optimal} if any intersection $\bigcap_{C \in \C} C$ with $\C \subset \B$
is contractible and the {\bf dimension functional }
$(1/|\B|) \sum_{A \in \B}$ $|\dim_G(A)$ $-$ $\dim(A)|$ can not be reduced by
splitting or merging elements in $\B$, or enlarging or shrinking any $B \in \B$ without 
violating the graph topology condition. In order for $\B$ to be optimal we also ask $\B$ to be minimal, 
in the sense that there is no proper subset of $\B$ producing a topology.
To keep things simple, we do not insist on the topology to be optimal.
Note that $\dim(\G)$ and $\dim(G)$ differ in general.
While it is always achievable that $\G$ is isomorphic to $G$, such a topology 
is not always optimal as Figure~(\ref{fine}) indicates. Figure~(\ref{poster}) illustrates
optimal topologies. \\

A graph with a topology $\O$ generated by a sub-base $\B$ is called a {\bf topological graph}.
As for any topology, the topology $\O$ generated by $\B$ has a lattice structure, 
As we will see, there is always a topology on a graph, an example being the {\bf discrete topology} generated by 
star graphs centered at vertices. Often better is the topology generated by the set $\B$ of unit
balls. \\

The topology defines the weighted nerve graph $(\G,\dim)$, where $\dim$ is the function on vertices $\B$ 
given by the dimension of $A \in \B$. The image of $\dim$ on $\B$ is the {\bf dimension spectrum}. The 
average of ${\rm dim}$ is the {\bf topological dimension} of the topological graph. Unlike the dimension
$\dim(G)$ of the graph, the dimension spectrum $\{ {\rm dim}(B) \; | \; B \in \B \}$ and
the topological dimension $(1/|\B|)\sum_{B \in \B} {\rm dim}(B)$ as well as the dimension of the
nerve graph $\dim(\G)$ depend on the choice of the topology. \\

Given two topological graphs $(G,\B,\O)$ and $(H,\C,\P)$, a map $\phi$ from $\O$ to $\P$ is called 
{\bf continuous}, if it induces a graph homomorphism of the
nerve graphs such that ${\rm dim}(\phi(A)) \leq {\rm dim}(A)$ for every $A \in \B$. 
A graph homomorphism $\phi: \G \to \H$ induces a
lattice homomorphism $\phi: \O \to \P$.
If $\phi$ has an inverse which is continuous too, we call it a {\bf graph homeomorphism}. 
A graph homeomorphism for $G$ is a graph isomorphism for the nerve graph $\G$. 
The notion defines an equivalence relation between topological graphs.  \\

This definition provides also an equivalence relation between finite simple graphs: 
two graphs are {\bf equivalent} if each can be equipped with a graph topology
such that the topological graphs are homeomorphic. A {\bf optimal equivalence} is the property
that the two graphs are equivalent with respect to topologies which are both optimal. 
In order to keep the definitions simple, we again do not focus on optimal equivalence. As examples in
classical topology and functional analysis in particular show, it is often useful to work 
with different topologies on the same space and allow also for very weak or very strong topologies.  \\

\begin{figure}
\scalebox{0.12}{\includegraphics{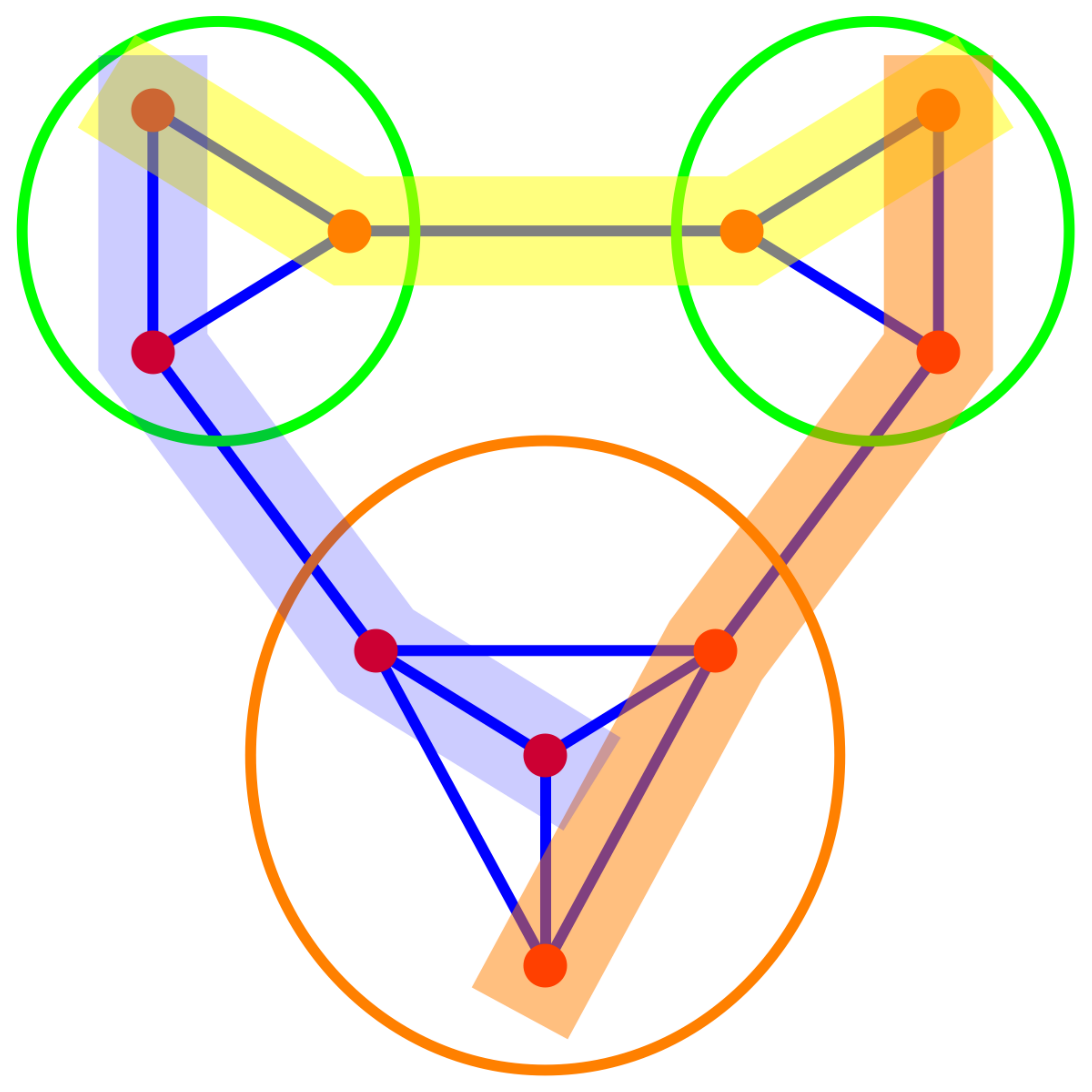}}
\scalebox{0.12}{\includegraphics{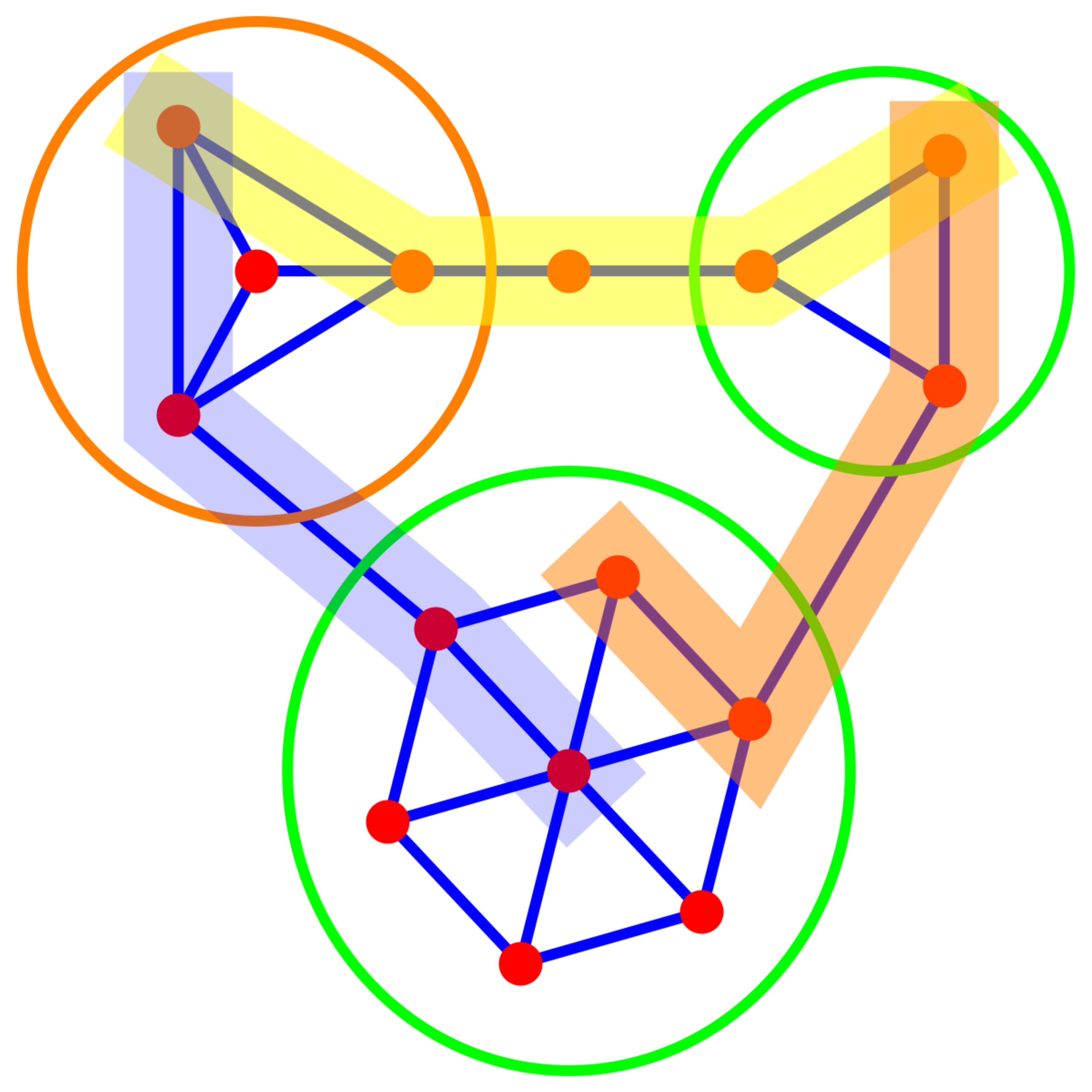}}
\caption{
Two homeomorphic graphs $H,G$ of order 10 and 15 are each equipped 
with an optimal subbase consisting of 6 sets. The dimension spectrum 
for both is $\{1,2,1,2,1,3 \; \}$ so that the topological dimension is $10/6=1.25$. 
We have ${\rm dim}(H)=131/60=2.183 \dots $ and
${\rm dim}(G) = 15/7 = 2.143 \dots$. The nerve graph $\G$
is the cyclic graph $\G=C_6$ with Lebesgue covering dimension $1$. 
When the nerve graph is seen as a subgraph of $G$ it appears as a
{\bf deformation retract} of $G$. 
\label{poster}
}
\end{figure}

Why is this interesting? We want graphs to be deformable in a {\bf rubber geometry} type fashion
and have basic topological properties like connectivity, dimension, Euler characteristic or homotopy
class preserved by the deformation. The wish list contains that all 
noncontractible cyclic graphs should be homeomorphic and that the octahedron
and icosahedron should be homeomorphic. If we want topologies $\O$ in the sense of classical point set
topology, a difficulty is that $\O$ often has different connectivity features
because finite topologies often have many sets which are both open and closed. This difficulty is bypassed by enhancing the topology using a 
sub-basis $\B$ of $\O$ of contractible sets in which dimension plays a crucial role: it is used to see 
which basis elements are linked and require this link structure called "nerve" to be homotopic to the graph 
itself. Dimension is crucial also when defining "continuity" because dimension should not increase under 
a continuous map. The definitions are constructive: we can start with the topology generated by star graphs
which generates the discrete topology and then modify the elements of the subbasis 
so that the dimension of the basis elements approximates the dimensions as embedded in the graph. 
While the proposed graph topology works for arbitrary finite simple graphs, it 
is inspired from constructions for manifolds, where the subbasis $\B$ 
is related to a \v{C}ech cover and the nerve graph corresponds to the nerve graph of the classical cover. 
Furthermore, for graphs without triangles, the homeomorphisms coincide with classical homeomorphisms 
in the sense of topological graph theory in which embedding questions of graphs
in continuum topological spaces plays an important role. An other motivation for a pointless approach 
are fixed point theorems for set-valued maps which are important in applications like game theory. This was
our entry point to this topic. Instead of set-valued maps, we can look directly at automorphisms of the 
lattice given by the topology and forget about the points.  That the notion is natural can be seen also from the fact
that - as mentioned below - the classical notion of homotopy using continuous deformations 
of maps works verbatim for graphs: there are continuous maps $f:H \to G, g: G \to H$ in the pointless topology
sense defined here such that $f \circ g$ and $g \circ f$ are homotopic to the identity.
The classical formulation of homotopy obviously is based on topology. 

\section{Results}

Ivashchenko homotopy \cite{I94} is based on earlier constructs put forward in \cite{Whitehead}. 
With a simplification \cite{CYY}, it works with Lusternik-Schnirelmann
and Morse theory \cite{josellisknill}. The definition is inductive: the one-point graph $K_1$ is {\bf contractible}.
A {\bf homotopy extension} of $G$ is obtained by selecting a contractible subgraph $H$ and making a pyramid
extension over $H$, which so adds an other vertex $z$ building a {\bf cone}.
The reverse step is to take a vertex $z$ with contractible unit sphere 
$S(z)$ and remove $z$ together with all connections from $z$. 
Two graphs are {\bf homotopic} if one can get from one to the other by a sequence of homotopy steps. A graph 
homotopic to the one point graph $K_1$ is called {\bf contractible}. Examples like the ``dunce hat" show
that this can not always be done by homotopy reductions alone. The space might first have to be thickened in order
to be contracted later to a point. 

\begin{figure}[H]
\scalebox{0.35}{\includegraphics{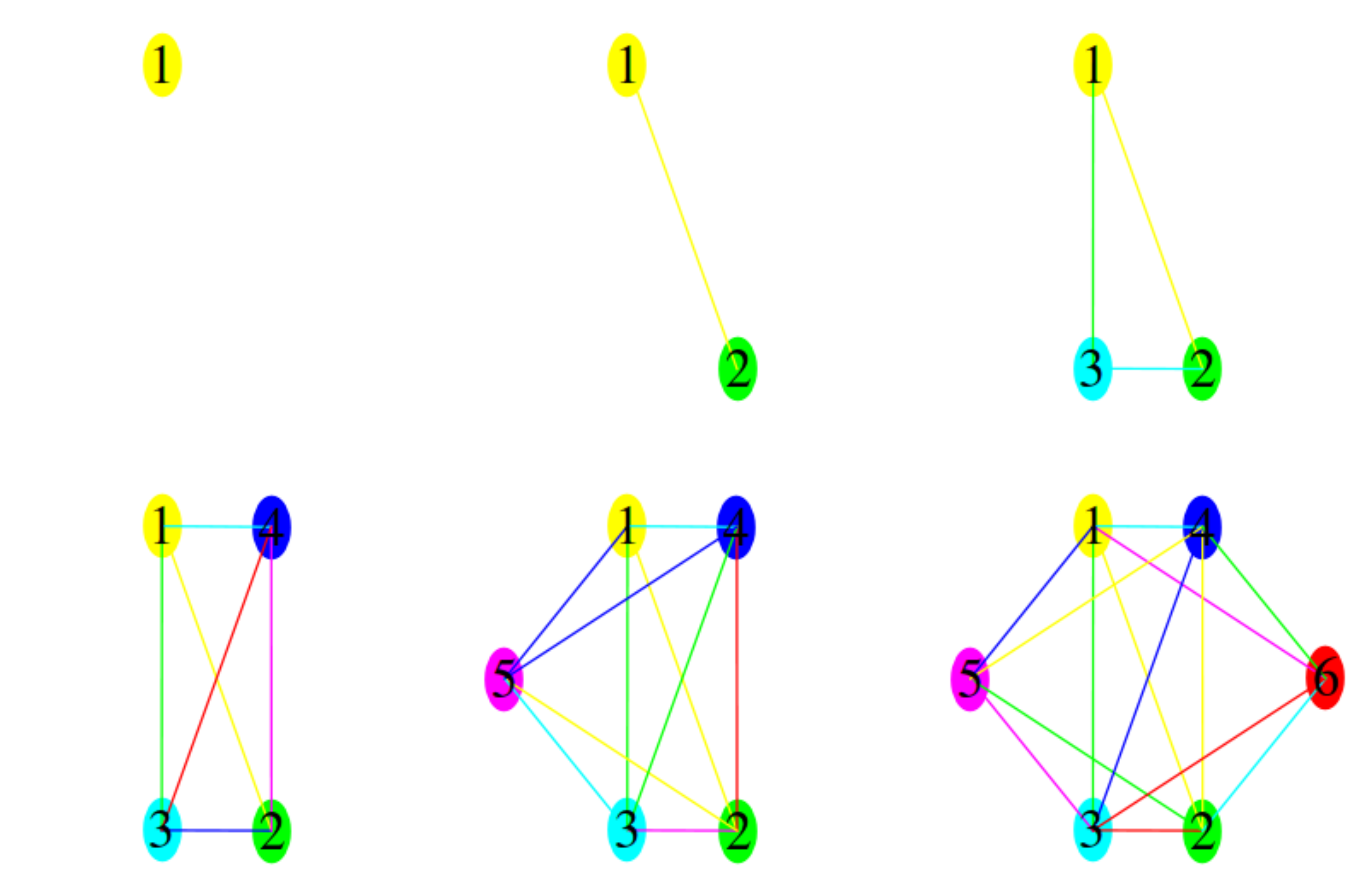}}
\caption{
Ivashkenko homotopy steps illustrate how a contractible graph is built
up from pyramid constructions, which builds cones over smaller contractible
subgraphs. 
\label{homotopy}
}
\end{figure}

Morse theory illustrates why homotopy is natural: given an injective function $f$ on $V$, we can look at the 
filtration $\{ f \leq c \; \}$ of the graph and define the {\bf index} ${\rm i}_f(x) = 1-\chi(S^-(x))$, where 
$S^-(x) = \{ y \in G \; | \; y \in S(x), f(y) \leq f(x) \; \}$. Start with $c={\rm min}(f)=f(x)$ such that $x_0$
is the minimum implying ${\rm i}_f(x_0)=1$. As $c$ increases, more and more vertices 
are added. If $S^-(x)$ is contractible, then $x$ is regular point and the addition a homotopy extension stop and 
${\rm ind}(x)=0$. If $S^-(x)$ is not contractible and in particular if the index $i_f(x)$ is not zero, 
then $x$ is a critical point. Because
Euler characteristic satisfies $\chi(A \cup B) = \chi(A) + \chi(B) - \chi(A \cap B)$, 
the sum of all indices is the Euler characteristic of $G$. We have just proven
the {\bf Poincar\'e-Hopf} theorem 
$\sum_{x \in V} {\bf i}_f(x) = \chi(G)$ \cite{poincarehopf}.
It turns out \cite{indexexpectation} that averaging the index over all possible functions using the product
topology gives the {\bf curvature} $K(x)= \sum_{k=0}^{\infty} (-1)^k V_{k-1}(x)/(k+1)$, where
$V_k(x)$ be the number of $K_{k+1}$ subgraphs of $S(x)$ and $V_{-1}(x)=1$. This leads to an other proof
of the {\bf Gauss-Bonnet-Chern} theorem $\sum_{x \in V} K(x) = \chi(G)$ \cite{cherngaussbonnet}. 
These results are true for any finite simple graph \cite{knillcalculus}. For {\bf geometric graphs} of dimension $d$
graphs for which every unit sphere is a Reeb graph (a geometric graph of dimension $d-1$ which admits an injective
function with exactly two {\bf critical points}, points where $S(x) \cap \{ y \; | \; f(y)<f(x) \; \})$ is not
contractible), then the curvature is zero for odd-dimensional geometric graphs \cite{indexformula}.\\

\begin{figure}
\scalebox{0.29}{\includegraphics{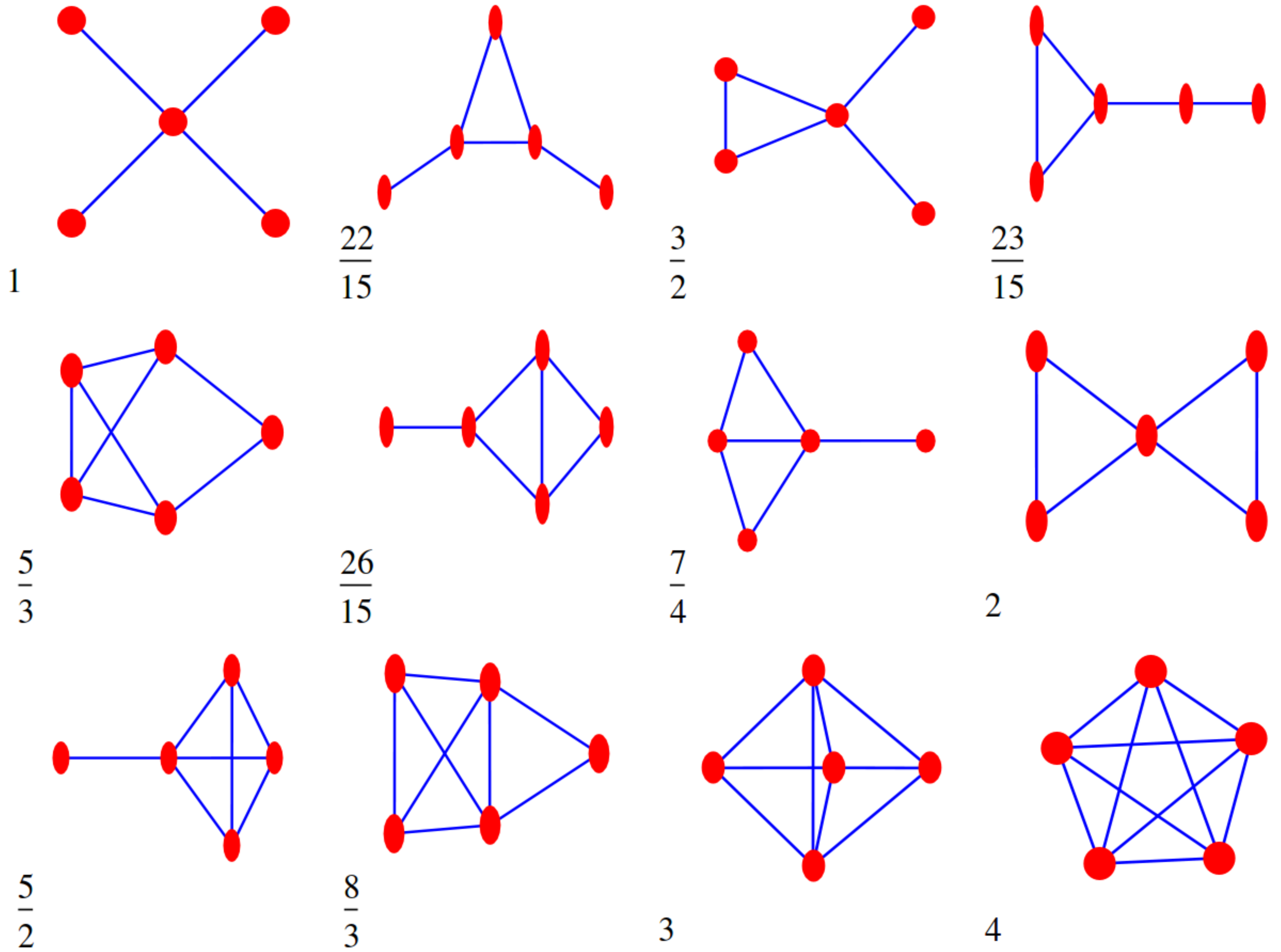}}
\caption{
The dimension functional takes 12 different values on 
the set of 728 connected graphs with $5$ vertices. Here is a
choice of 12 representatives picked for each dimension value. 
\label{dimension}
}
\end{figure}

{\bf Graph cohomology} uses the set $\G_k$ of $K_{k+1}$ subgraphs of $G$ called {\bf cliques}. 
An orientation on each maximal cliques induce an orientations on sub-cliques.
Define the {\bf exterior derivative }
$d_kf(x) = \sum_{k=0}^n f(x_0, \dots, \hat{x}_k, \dots, x_n)$
on the $v_k$-dimensional vector space $\Omega_k$ of all functions
on $\G_k$ which are {\bf alternating} in the sense that
$f(\pi x) = (-1)^{\pi} f(x)$, where $(-1)^{\pi}$ is the sign of the 
permutation $\pi$ of the coordinates $(x_0, \dots ,x_n)=x$ of $x \in \G_k$. The orientation fixes a
still ambiguous sign of $f$. While the linear map $d_k: \Omega_k \to \Omega_{k+1}$
depends on the orientation, the {\bf cohomology groups} $H^k(G) = {\rm ker}(d_{k})/{\rm im}(d_{k-1})$ 
are vector spaces which do not depend on the choice of orientations. It corresponds to a choice of 
the basis.  Hodge theory allows to realize $H^k(G)$ as ${\rm ker}(L_k)$, 
where $L_k$ is the Laplacian $L=(d+d^*)^2=D^2$ restricted to $\Omega_k$. Its dimension 
is the $k$'th Betti number $b_k = {\rm dim}(H^k(G))$. In calculus lingo, $d_0$ is the {\bf gradient}, 
$d_1$ the {\bf curl} and $d_0^*$ the {\bf divergence} and $L_0=d_0^* d_0$ is the {\bf scalar Laplacian}. 
The matrix $D=d+d^*$ is the {\bf Dirac operator} of the graph. It is unique up to orthogonal conjugacy
given by the orientation choice. Many results from the continuum hold verbatim \cite{knillmckeansinger}.
Graph cohomology is by definition equivalent to simplicial cohomology and formally equivalent to any 
discrete adaptation of de Rham cohomology. In particular, if $v_k = \dim(\Omega_k)$ is the cardinality of $\G_k$, then 
the {\bf Euler-Poincar\'e formula} $\sum_{k=0}^{\infty} (-1)^k v_k = \sum_{k=0}^{\infty} (-1)^k b_k$ holds. 
All this works for a general finite simple graph and that no geometric assumptions whatsoever is needed.
Only for {\bf Stokes theorem}, we want to require that the boundary $\delta G$ is a graph. Stokes tells
that if $H$ is a subgraph of $G$ which is a union of $k$-dimensional simplices which have compatible orientation
and $f \in \Omega_{k-1}$ then $\sum_{x \in H} df(x) = \sum_{x \in \delta H} f(x)$. The formula holds
due to cancellations at intersecting simplices. When graphs are embedded in smooth manifolds, one can be
led to the {\bf classical Stokes theorem} after introducing the standard 
calculus machinery based on the concept of "limit". \\

As usual, lets call a graph $G=(V,E)$ {\bf path connected} if for any two vertices $x,y$, there is a path 
$x=x_0,x_1,\dots,x_n=y$ with $(x_{i},x_{i+1}) \in E$ which connects $x$ with $y$. 
Path connectedness is what traditionally is understood with {\bf connected} in graph theory. Lets call a topological
graph $(G,\O)$ to be {\bf connected} if $\B$ can not be written as a union of two sets $\B_1,\B_2$ which have
no common intersection. 
This notion of connectedness is equivalent to the one usually given if a topology has a subbasis
given by connected sets. Lets denote with{\bf $1$-homeomorphic} the classical notion of homeomorphism in topological graph theory:
a graph $H$ is $1$-homeomorphic to $G$ if it can be deformed to $G$ by applying 
or reversing {\bf barycentric subdivision steps} of edges. Finally, lets call a graph {\bf $K_3$-free}, if it contains 
no triangles. 

\begin{thm} Every graph has an optimal graph topology. \label{1} \end{thm}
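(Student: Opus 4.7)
The plan is to argue by finite minimization on the set $\mathcal{T}$ of all graph topologies on $G=(V,E)$. Since $V$ is finite, every subbase $\B$ is a subset of the finite power set $2^V$, so $\mathcal{T}$ is finite. I must verify $\mathcal{T}\neq\emptyset$ and then extract an element satisfying the three optimality clauses: (i) every nonempty intersection $\bigcap_{C\in\C'}C$ with $\C'\subset\B$ is contractible, (ii) the dimension functional $F(\B)=\frac{1}{|\B|}\sum_{A\in\B}|\dim_G(A)-\dim(A)|$ cannot be strictly reduced by a local splitting, merging, enlarging, or shrinking move inside $\mathcal{T}$, and (iii) $\B$ is minimal, meaning no proper $\B'\subsetneq\B$ generates a graph topology.

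For existence of some $\B_0\in\mathcal{T}$ I would take the unit-ball subbase $\B_0=\{B(x):x\in V\}$ with $B(x)$ the subgraph induced by $\{x\}\cup S(x)$: each $B(x)$ is a cone with apex $x$ over $S(x)$ and is hence contractible in the Ivashchenko sense; every edge $(x,y)$ lies in $B(x)$ so the covering condition holds; pairwise intersections satisfying the dimension assumption retain a shared cone structure and are contractible; and a standard nerve-type argument via Ivashchenko reductions shows the nerve graph $\G_0$ is homotopic to $G$. Thus $\mathcal{T}\neq\emptyset$.

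Equip $\mathcal{T}$ with the lexicographic order that first minimizes the number of non-contractible nonempty multi-intersections $\bigcap_{C\in\C'}C$, then $F(\B)$, then the cardinality $|\B|$. A lex-minimizer $\B^\ast$ exists by finiteness of $\mathcal{T}$. Clauses (ii) and (iii) are immediate from the choice of $\B^\ast$: any allowed local move produces another element of $\mathcal{T}$ whose lex-value is not smaller, so in particular $F$ cannot strictly decrease; and any proper sub-topology $\B'\subsetneq\B^\ast$ would strictly decrease $|\B|$ without worsening the earlier tiers, contradicting lex-minimality. For clause (i) one argues the first tier vanishes at $\B^\ast$: otherwise, pick a non-contractible intersection $A_1\cap\cdots\cap A_k$ and refine $A_1$ into a finite family of smaller cones covering it -- in the extreme, split it into singletons, which are contractible and whose pairwise intersections trivially fail the dimension assumption and impose no constraint. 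This refinement remains in $\mathcal{T}$ while strictly reducing the first-tier count, contradicting the choice of $\B^\ast$.

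The main obstacle will be executing the refinement step without breaking the homotopy equivalence of the nerve graph with $G$ or introducing new non-contractible multi-intersections that cancel the gain. I would handle this by induction on the poset of multi-intersections ordered by reverse inclusion, always refining a minimal-size offender first: the finer subbase produces a nerve that differs from the previous one only by an Ivashchenko subdivision, so the homotopy equivalence with $G$ is preserved, and new intersections arising from the split are proper subcones of the original element and carry at least as much control. Finiteness of $V$ guarantees that the iterative refinement terminates, producing an element of $\mathcal{T}$ on which the first tier is zero, and hence the lex-minimum $\B^\ast$ satisfies all three clauses and is an optimal graph topology.
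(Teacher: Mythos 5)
Your overall strategy---finite search over the set of graph topologies plus a minimization argument for optimality---matches the spirit of the paper's proof, which also invokes finiteness and a ``keep improving until you cannot'' step. But your existence step is wrong. You take $\B_0=\{B(x):x\in V\}$, the unit-ball cover, and assert that a ``standard nerve-type argument'' shows $\G_0$ is homotopic to $G$. The paper itself exhibits counterexamples (Remark 14 and Figure~\ref{ball}): for two triangles joined by a vertex, the dimension assumption fails to link the relevant adjacent balls and the nerve graph splits into two components not homotopic to $G$; in another example the unit-ball nerve is contractible while $G$ is not. So the unit-ball cover is \emph{not} always a graph topology, and your claim that ``pairwise intersections satisfying the dimension assumption retain a shared cone structure'' does not rescue the homotopy condition on the nerve. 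The paper instead uses the \emph{star graphs} centered at vertices: each star is a one-dimensional contractible tree, adjacent stars intersect in an edge (dimension $1=\min$), stars at distance $2$ intersect only in a zero-dimensional set so the dimension assumption cuts that link, and the nerve is therefore tautologically $G$ itself, which is trivially homotopic to $G$. That is the clean existence argument you are missing.

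Your treatment of clause (i) also fails. Splitting an offending set $A_1$ ``into singletons'' violates the requirement that every edge be contained in some element of $\B$, and more importantly it wrecks the nerve: singleton (radius-$1/2$) covers give an edgeless nerve graph with Euler characteristic $|V|$, which the paper explicitly flags as not a graph topology ($\B_4$ in Figure~\ref{fullerene}). Your assertion that the refined nerve ``differs from the previous one only by an Ivashchenko subdivision'' is exactly the point that would need proof and is false in this generality. The paper sidesteps this by treating optimality as a \emph{local} extremum of the dimension functional over the finite set of graph topologies reachable by splitting, merging, enlarging, or shrinking without violating the topology conditions---so no constructive refinement procedure is needed; one simply stops when no admissible move improves the functional. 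Your lexicographic ordering is a reasonable formalization of that finiteness argument for clauses (ii) and (iii), but the first tier of your order is carrying a claim (that the non-contractible-intersection count can always be driven to zero) that neither you nor, frankly, the paper actually establishes by the singleton route; the paper's proof simply does not attempt your tier-one reduction.
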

\begin{thm} Homeomorphisms preserve the dimension spectrum.     \label{2} \end{thm}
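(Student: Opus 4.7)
The plan is to unpack the definition of a graph homeomorphism and reduce the statement to a symmetric pair of inequalities. Recall that $\phi: (G,\B,\O) \to (H,\C,\P)$ being a homeomorphism means $\phi$ and $\phi^{-1}$ are both continuous, and by definition continuity of a map includes two pieces of data: it induces a graph homomorphism of the nerve graphs, and it satisfies the dimension-monotonicity $\dim(\phi(A)) \leq \dim(A)$ for every $A$ in the source sub-basis. Because a graph homeomorphism is in particular a graph isomorphism of nerve graphs, $\phi$ restricts to a bijection $\B \to \C$, and $\phi^{-1}$ is its set-theoretic inverse on sub-basis elements.

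The first step is then to apply dimension-monotonicity to $\phi$: for each $A \in \B$ one has $\dim(\phi(A)) \leq \dim(A)$. The second step is to apply dimension-monotonicity to $\phi^{-1}$, evaluated at $\phi(A) \in \C$: this yields $\dim(\phi^{-1}(\phi(A))) \leq \dim(\phi(A))$. Since $\phi^{-1}\circ\phi$ is the identity on $\B$ (as $\phi$ is a bijection of nerve-graph vertex sets), the left-hand side is exactly $\dim(A)$, so we obtain $\dim(A) \leq \dim(\phi(A))$. Combining the two inequalities gives $\dim(\phi(A)) = \dim(A)$ for every $A \in \B$.

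The third and final step is to package this into the statement about spectra. The dimension spectrum of $(G,\B,\O)$ is the image (as a multiset) of $\dim$ on $\B$, and the dimension spectrum of $(H,\C,\P)$ is the image of $\dim$ on $\C$. Since $\phi$ is a bijection $\B \to \C$ with $\dim(\phi(A)) = \dim(A)$, the two multisets coincide, which is exactly what the theorem asserts.

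I do not expect a real obstacle here: the theorem is essentially a tautological consequence of the definitions, and the only subtle point is remembering that ``continuous'' is built out of two clauses (nerve-graph morphism plus dimension monotonicity) so that invertibility transports the inequality in both directions. If anything, the minor care required is to verify that $\phi^{-1}(\phi(A)) = A$ as sub-basis elements, which follows because the nerve graph isomorphism is a bijection on the underlying vertex set $\B$ and so the set-level inverse agrees with $\phi^{-1}$ applied elementwise.
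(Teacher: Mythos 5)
Your proof is correct and follows essentially the same route as the paper: the paper's own proof simply asserts that corresponding sub-basis elements $A$ and $\phi(A)$ have the same dimension, which is exactly the two-sided inequality you derive from continuity of $\phi$ and $\phi^{-1}$. You merely make explicit the step the paper leaves implicit, so there is nothing to correct.
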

\begin{thm} Homeomorphic graphs are homotopic.                  \label{3} \end{thm}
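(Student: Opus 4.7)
The plan is to exploit the defining property of a graph topology: the nerve graph $\G$ of the subbasis $\B$ is required to be homotopic to $G$. Once this is established on both sides of a homeomorphism, the result reduces to transitivity of homotopy.

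First I would unpack what a homeomorphism $\phi$ between topological graphs $(G,\B,\O)$ and $(H,\C,\P)$ buys us. By definition, $\phi$ is continuous, so it induces a graph homomorphism $\G \to \H$ of nerve graphs with $\dim(\phi(A)) \leq \dim(A)$ for every $A \in \B$. Since $\phi^{-1}$ is also continuous, it induces a graph homomorphism $\H \to \G$ in the reverse direction with the analogous dimension bound. The two maps are mutual inverses on the vertex sets of the nerve graphs, and each preserves adjacency, so together they give a graph isomorphism $\G \cong \H$. (This is the content of the parenthetical remark in the definition: ``A graph homeomorphism for $G$ is a graph isomorphism for the nerve graph $\G$.'')

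Next, isomorphic graphs are trivially homotopic, since an isomorphism is just a relabeling of vertices and the empty sequence of Ivashchenko moves connects $\G$ to $\H$. Combining this with the two homotopies supplied by the graph topology axiom, namely $G \simeq \G$ and $H \simeq \H$, we obtain the chain
\[
G \;\simeq\; \G \;\cong\; \H \;\simeq\; H,
\]
and since Ivashchenko homotopy is an equivalence relation on finite simple graphs, we conclude $G \simeq H$.

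The only point requiring some care — and what I would treat as the main (minor) obstacle — is verifying that continuity of both $\phi$ and $\phi^{-1}$ really does upgrade the induced nerve-graph map from a homomorphism to an isomorphism. This is where the symmetric formulation of continuity pays off: one needs to check that no pair $(A,B) \in \B \times \B$ satisfying the dimension assumption is collapsed or duplicated under $\phi$, which is automatic because $\phi$ is a bijection of $\B$ onto $\C$ with $\phi^{-1}$ again a graph homomorphism. Everything else is a direct appeal to the defining axiom of a graph topology and the transitivity of the homotopy relation.
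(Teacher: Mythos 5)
Your proposal is correct and follows exactly the paper's own argument: $G$ and $H$ are each homotopic to their nerve graphs by the graph-topology axiom, the homeomorphism makes the nerve graphs isomorphic, and transitivity of the homotopy relation finishes the proof. The only difference is that you spell out in more detail why the induced nerve-graph map is an isomorphism, which the paper takes as immediate from the definition.
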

\begin{thm} Homeomorphic graphs have the same cohomology. \label{4} \end{thm}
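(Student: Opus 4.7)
The plan is to deduce Theorem \ref{4} from Theorem \ref{3} together with the homotopy invariance of simplicial cohomology of the clique complex. Since the paper takes graph cohomology to be simplicial cohomology on the set of cliques $\G_k$, it suffices to check that each elementary Ivashchenko move preserves the cohomology groups $H^k$; then a homotopy equivalence $G \simeq H$ produces an isomorphism $H^k(G) \cong H^k(H)$ by induction on the length of the homotopy. This reduces the statement to a local computation about pyramid extensions.

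Concretely, I would first recall the chain complex $(\Omega_\bullet, d_\bullet)$ with $\Omega_k$ the alternating functions on $\G_k$, so that $H^k(G) = \ker(d_k)/\mathrm{im}(d_{k-1})$. Let $G'$ be a homotopy extension of $G$ by a new vertex $z$ coned over a contractible subgraph $H \subset G$. The new $k$-cliques of $G'$ are exactly the $k$-cliques of $G$ together with cliques of the form $\{z\} \cup \sigma$ where $\sigma$ is a $(k-1)$-clique of $H$. In matrix terms, the cochain complex of $G'$ fits into a short exact sequence
\begin{equation*}
0 \to \Omega_\bullet(G) \to \Omega_\bullet(G') \to C_\bullet \to 0,
\end{equation*}
where $C_\bullet$ is the cone complex on the cochain complex of $H$ shifted by one. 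Since $H$ is contractible, by induction on the inductive definition of contractibility the cochain complex of $H$ has the cohomology of a point, so the mapping cone $C_\bullet$ is acyclic; the long exact sequence in cohomology then gives $H^k(G') \cong H^k(G)$. Iterating this (and its reverse) along the chain of homotopy moves proves homotopy invariance.

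With homotopy invariance in hand, Theorem \ref{4} follows: a homeomorphism $\phi:(G,\B,\O) \to (H,\C,\P)$ induces a graph isomorphism $\G \to \H$ of the nerve graphs by definition, and isomorphic graphs trivially have the same clique complex, hence the same cohomology. Combined with the graph topology condition $G \simeq \G$ and $H \simeq \H$, one obtains
\begin{equation*}
H^k(G) \;\cong\; H^k(\G) \;\cong\; H^k(\H) \;\cong\; H^k(H),
\end{equation*}
which is the claim. Alternatively, one can bypass the nerve and simply chain $G \simeq H$ from Theorem \ref{3} directly into the invariance result.

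The main obstacle is the acyclicity of the cone complex $C_\bullet$ used above: one must verify that ``contractible in the Ivashchenko sense'' really implies ``cohomologically trivial'' at the level of $\Omega_\bullet$. This is a structural induction mirroring the inductive definition of contractibility itself; the base case is $K_1$, whose only nontrivial cohomology is $H^0 \cong \R$, and the inductive step reuses the same cone-attachment argument. A clean way to present this is via the Euler-Poincar\'e formula and the fact, stated in the paper, that adding a regular (contractible $S^-(x)$) vertex adds a chain contraction; an alternative is to note that graph cohomology coincides with simplicial cohomology of the Whitney complex, for which homotopy invariance under Ivashchenko moves is already in the literature (cf.\ \cite{I94,CYY}).
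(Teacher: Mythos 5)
Your proposal follows essentially the same route as the paper: Theorem~\ref{3} reduces the claim to homotopy invariance of graph cohomology under Ivashchenko moves, which the paper simply asserts with a citation to \cite{I94} while you expand it into a mapping-cone argument. The only wrinkle in your elaboration is that extension by zero is not a cochain map, so the short exact sequence should run $0 \to C_\bullet \to \Omega_\bullet(G') \to \Omega_\bullet(G) \to 0$ with restriction as the surjection and the shifted (augmented) complex of the contractible subgraph as the kernel; this does not affect the conclusion, and your fallback of citing \cite{I94,CYY} covers it in any case.
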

\begin{thm} Homeomorphic graphs have the same $\chi$. \label{5} \end{thm}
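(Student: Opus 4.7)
The plan is to derive Theorem 5 as an immediate corollary of Theorem 4 together with the Euler--Poincar\'e formula that has already been recorded in the introduction. Concretely, if $\phi:(G,\B,\O)\to (H,\C,\P)$ is a graph homeomorphism, then Theorem 4 yields isomorphisms $H^k(G)\cong H^k(H)$ of cohomology groups for every $k\ge 0$, hence the Betti numbers agree: $b_k(G)=b_k(H)$ for all $k$. I would then invoke the identity
\[
\chi(G)\;=\;\sum_{k=0}^{\infty}(-1)^k v_k(G)\;=\;\sum_{k=0}^{\infty}(-1)^k b_k(G),
\]
which holds for every finite simple graph and has been recalled explicitly in the excerpt. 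Applying the same identity to $H$ and comparing Betti numbers gives $\chi(G)=\chi(H)$.

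As a sanity check I would also verify the result along a second, more elementary route, bypassing cohomology: by Theorem 3, homeomorphic graphs are homotopic, so it suffices to show that a single Ivashchenko homotopy step does not change $\chi$. The homotopy step adds a cone vertex $z$ over a contractible subgraph $K$; the new cliques of the enlarged graph $G'$ are exactly $\{z\}$ together with $\{z\}\cup\sigma$ for each clique $\sigma$ of $K$, giving
\[
\chi(G')-\chi(G)\;=\;1-\sum_{j\ge 0}(-1)^j v_j(K)\;=\;1-\chi(K)\;=\;0,
\]
since a contractible graph has Euler characteristic $1$. The reverse step is symmetric. This redundant computation corroborates the cohomological argument and confirms that the two natural definitions of $\chi$ (via clique counts and via Betti numbers) are both homeomorphism invariants.

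I do not expect a genuine obstacle in the proof itself; Theorem 5 is essentially a formal consequence of Theorem 4. The only point that deserves attention is to be explicit about which Euler characteristic is meant in the statement, so that the reader sees that the combinatorial definition $\chi(G)=\sum_k(-1)^k v_k(G)$ used elsewhere in the paper is precisely the quantity that is forced to agree on the two sides by Euler--Poincar\'e. Once this identification is spelled out, the proof reduces to one line.
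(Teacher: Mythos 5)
Your proposal is correct and follows essentially the same path as the paper: the main argument derives Theorem~5 from Theorem~4 via the Euler--Poincar\'e formula, and your secondary clique-counting check of a single homotopy step is the same computation the paper gives as its alternative (there phrased as $\chi(G\cup_Z\{z\})=\chi(G)+\chi(B(z))-\chi(S(z))=\chi(G)$ by inclusion--exclusion). No gaps.
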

\begin{thm} Connected and path connected is always equivalent.    \label{6} \end{thm}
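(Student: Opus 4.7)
The plan is to verify both implications directly from the two structural properties of a graph topology that we have in hand: every edge of $G$ is contained in some subbasis element $B \in \B$, and every $B \in \B$ is contractible, so in particular (path) connected as a subgraph of $G$. I read the definitional clause ``$\B$ cannot be written as a union of two sets $\B_1,\B_2$ which have no common intersection'' as saying that no $B \in \B_1$ shares a vertex with any $B' \in \B_2$; this is the natural reading, since the vertex-wise intersection pattern of $\B$ is exactly what determines the nerve graph, and connectedness of $(G,\O)$ should amount to connectedness of $\G$.

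For the direction ``path connected $\Rightarrow$ connected,'' I would assume $G$ path connected and, for contradiction, suppose $\B=\B_1\cup\B_2$ is a splitting with no $B\in\B_1$ meeting any $B'\in\B_2$. Pick any vertex $v_0$ and an element $B_0\in\B$ containing it; without loss of generality $B_0\in\B_1$. For any neighbor $w$ of $v_0$, the edge $(v_0,w)$ lies in some $B'\in\B$, and $v_0 \in B_0\cap B'$ forces $B'\in\B_1$, so $w$ is also covered by $\B_1$. Propagating along paths using path connectedness shows every vertex of $G$ lies in some element of $\B_1$. But any $B''\in\B_2$ contains a vertex $u$, which then also lies in some $\tilde B\in\B_1$, contradicting the disjointness hypothesis; hence $\B_2=\emptyset$.

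For the reverse direction ``connected $\Rightarrow$ path connected,'' I would argue contrapositively. Suppose $G$ is not path connected, and group the path components of $G$ into two nonempty vertex sets $V_1,V_2$ with no edges between them. Because each $B\in\B$ is contractible, hence path connected as a graph, its vertex set lies entirely in a single $V_i$. Setting $\B_i=\{B\in\B:V(B)\subset V_i\}$ therefore yields $\B=\B_1\cup\B_2$ with $B\cap B'=\emptyset$ for all $B\in\B_1$, $B'\in\B_2$, so $(G,\O)$ is disconnected. The main obstacle is really just pinning down the right interpretation of ``no common intersection'' above; once that is settled, both directions are short edge-chasing arguments that use only the covering property of $\B$ and the connectedness of each $B$, with a minor side-remark needed to handle an isolated-vertex graph (where both notions collapse to the trivial one-vertex case).
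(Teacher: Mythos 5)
Your proposal is correct and takes essentially the same route as the paper: both directions rest on the same two facts (every edge lies in some $B\in\B$, and each $B\in\B$ is contractible, hence path connected), and your propagation-along-paths argument is just the contrapositive form of the paper's observation that an edge crossing between the two pieces would force a nonempty intersection between $\B_1$ and $\B_2$. The isolated-vertex caveat you flag is glossed over in the paper's own proof as well.
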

\begin{thm} $1$-homeomorphic $K_3$-free graphs are homeomorphic.              \label{7} \end{thm}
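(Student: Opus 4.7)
The plan is to reduce to the case of a single barycentric subdivision step: assume $G'$ is obtained from the $K_3$-free graph $G$ by subdividing one edge $(a,b)$ with a new vertex $c$ (the resulting $G'$ is again $K_3$-free) and iterate to handle a general $1$-homeomorphism. On $G$ I would take the unit-ball subbasis $\B_G = \{B_G(x) : x \in V(G)\}$: since $G$ has no triangles, each $B_G(x)$ is a star, hence contractible and of dimension $1$ whenever the degree is at least $1$; for two such balls the intersection is the edge $\{x,y\}$ when $x \sim y$ (dimension $1$) and otherwise a discrete set of common neighbors (dimension $\leq 0$), so the dimension assumption holds exactly when $x \sim y$. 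Thus $\G_G \cong G$.

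On $G'$ I would modify only the ball at $a$ by absorbing $b$:
\[
\B_{G'} \;=\; \{B_{G'}(x) : x \in V(G),\ x \neq a\} \cup \{B'_a\},\qquad B'_a := B_{G'}(a)\cup\{b\},
\]
viewed as induced subgraph of $G'$. Because $G$ has no triangle through the edge $(a,b)$, no vertex of $N_G(a)\setminus\{b\}$ is adjacent to $b$ in $G'$; hence $B'_a$ is a tree, namely the star at $a$ with spokes to $N_G(a)\setminus\{b\}$ and $c$ together with $b$ pendant from $c$, and is therefore contractible (attach $b$ to the contractible vertex $\{c\}$ by a homotopy extension), of dimension $1$. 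A pairwise check using $K_3$-freeness (to exclude common neighbors of adjacent vertices) gives $B'_a \cap B_{G'}(b) = \{b,c\}$, the edge $(b,c)$; $B'_a \cap B_{G'}(x)=\{a,x\}$ when $x \in N_G(a)\setminus\{b\}$; all other intersections are discrete or empty; and every edge of $G'$, including $(a,c)$ and $(c,b)$, lies in some basis element. Hence the nerve $\G_{G'}$ has an edge precisely between those pairs corresponding to adjacent vertices of $G$, giving $\G_{G'}\cong G\cong \G_G$.

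The bijection $\phi : B_G(x)\mapsto B_{G'}(x)$ for $x\neq a$, $B_G(a)\mapsto B'_a$ is then an isomorphism of nerves, and since every basis element on both sides has dimension $1$, the continuity condition $\dim(\phi(A))\leq \dim(A)$ and the analogous one for $\phi^{-1}$ are automatic. This produces the desired graph homeomorphism, once both $(G,\B_G)$ and $(G',\B_{G'})$ are verified to satisfy all axioms of a topological graph.

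The main obstacle is verifying the last defining condition for $(G',\B_{G'})$, namely that the nerve $\G_{G'}\cong G$ is homotopic to $G'$. This reduces to the assertion that a single $K_3$-free edge subdivision is a graph homotopy, which I would realize by a two-move Ivashchenko deformation $G \to G\cup\{c\} \to G'$: first extend $G$ by attaching $c$ to the contractible subgraph $\{a,b\}$ to produce the triangle $abc$; then delete the edge $(a,b)$, admissible because the common link of $a$ and $b$ in the intermediate graph is just the contractible vertex $\{c\}$ (no other common neighbors exist by $K_3$-freeness of $G$). This is Ivashchenko's edge move \cite{I94}, whose equivalence with the vertex-only homotopy used in the paper is the content of \cite{CYY}. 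Iterating the construction of $\phi$ over the sequence of subdivisions connecting $G$ and $G'$ then completes the proof.
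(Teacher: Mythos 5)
Your proposal is correct and follows essentially the same route as the paper: reduce to a single edge--subdivision step, absorb the new vertex into the open sets containing the subdivided edge so that the number of subbasis elements is unchanged, and establish the required homotopy between $G'$ and its nerve via the pyramid-construction-then-delete-edge moves of Ivashchenko and Chen--Yau--Yeh, with $K_3$-freeness guaranteeing that the common link of the edge's endpoints is a single vertex. Your explicit star/unit-ball subbasis and the pairwise nerve computation simply flesh out details the paper leaves implicit.
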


{\bf Remarks.} \\
{\bf 1)} The definitions have been chosen so that the proofs are immediate. \\
{\bf 2)} As in the continuum, the curvature, indices,
the cluster coefficient or the average length are not topological invariants. \\
{\bf 3)} Theorem~(\ref{2}) essentially tells that combinatorial cohomology 
on a graph agrees with \v{C}ech cohomology defined by the topology. \\
{\bf 4)} The assumptions imply that an optimal topology $\O$ has a basis which 
consists of contractible sets, where the notion of basis is the classical notion
as used in set theoretical topology.  \\

\begin{figure}
\scalebox{0.14}{\includegraphics{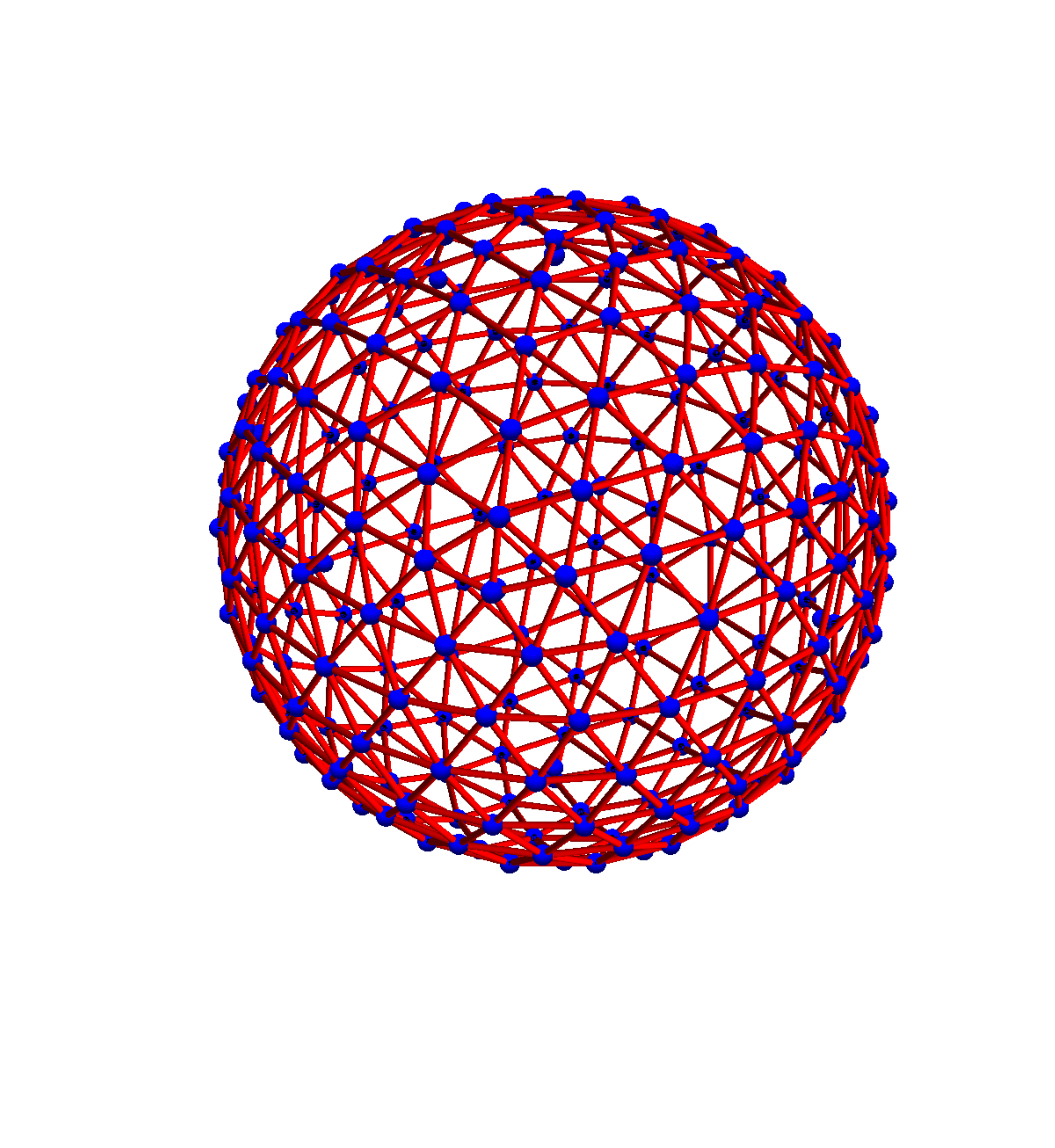}}
\scalebox{0.14}{\includegraphics{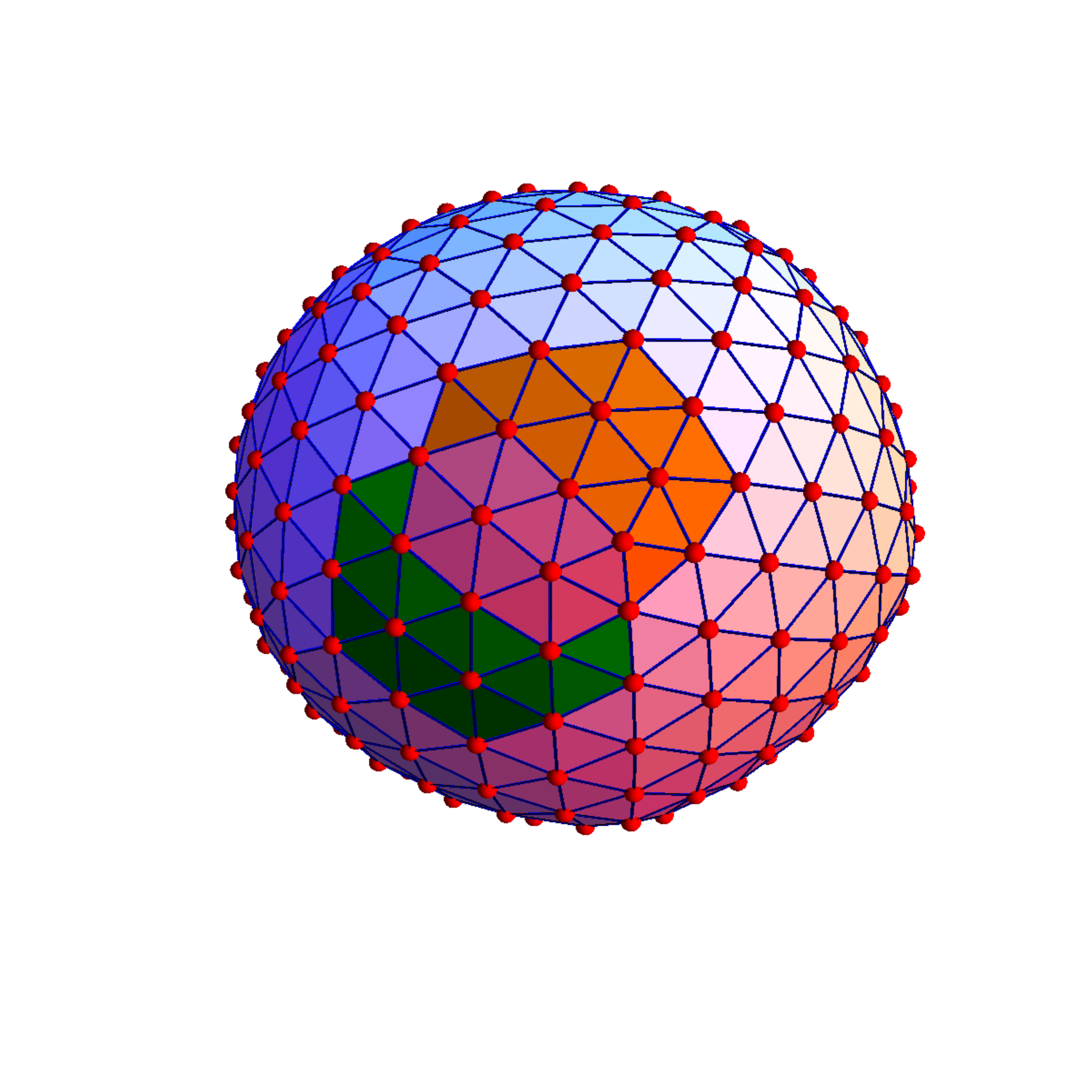}}
\caption{
The displayed two-dimensional graph $G$ with 252 vertices, 750 edges and 500 triangles is 
homeomorphic to an icosahedron. We can find a set $\B_1$ consisting of 12 open sets
which lead to the icosahedron $\G_1$ as the nerve graph on which ${\rm dim}$ is constant
$2$. An other topology takes $\B_2$ as the set of unit balls which are wheel graphs $W_5$ or $W_6$. 
The nerve of $\B_2$ is $G=\G_2$ itself and the dimension again constant $2$. A third topology 
in which we take the star graphs $\B_3$ centered at vertices also has $G=\G_3$ as the nerve graph
but the dimension function is constant $1$. Even so the nerve graphs $\G_2$ and $\G_3$ are the
same, the weighted graphs $(\G_2,{\rm dim}),(\G_3,{\rm dim})$ are different. The set $\B_4$
consisting of all geodesic balls of radius $1/2$ leads to the discrete topology too but the nerve 
graph is not homotopic to $G$, it has no vertices and Euler characteristic is $252$ and the 
dimension is constant $0$. $\B_4$ is not a graph topology. Finally, there is a topology $\B_5$
with $6$ elements which makes the graph homeomorphic to the octahedron. 
\label{fullerene}
}
\end{figure}

\section{Proofs}

{\bf Proof of \ref{1}:} \\
Any finite simple graph has a topology. It is the topology generated by 
{\bf star graphs at a vertex $x$}: this is the smallest graph which contains all edges attached to $x$. 
The nerve graph of this topology is the graph itself: each element is one-dimensional as is the intersection
so that the intersection assumption holds. Two star graphs with centers of geodesic distance $2$ are not 
connected because their intersection is zero-dimensional only. This proves existence. 
(The unit ball topology is often
natural too, especially in the case when $G$ is a triangularization of a manifold.)
To get optimality, start with a topology and increase, split 
or remove elements as long as all the topology conditions are satisfied as long as we decreases the dimension 
functional.  Once we can no more increase, we have an optimal topology. 
It might be a local extremum of the dimension functional only. Since there are only finitely many topologies,
we certainly also could get a global extremum, even so it might be costly to find it. \\

\begin{figure}[H]
\scalebox{0.12}{\includegraphics{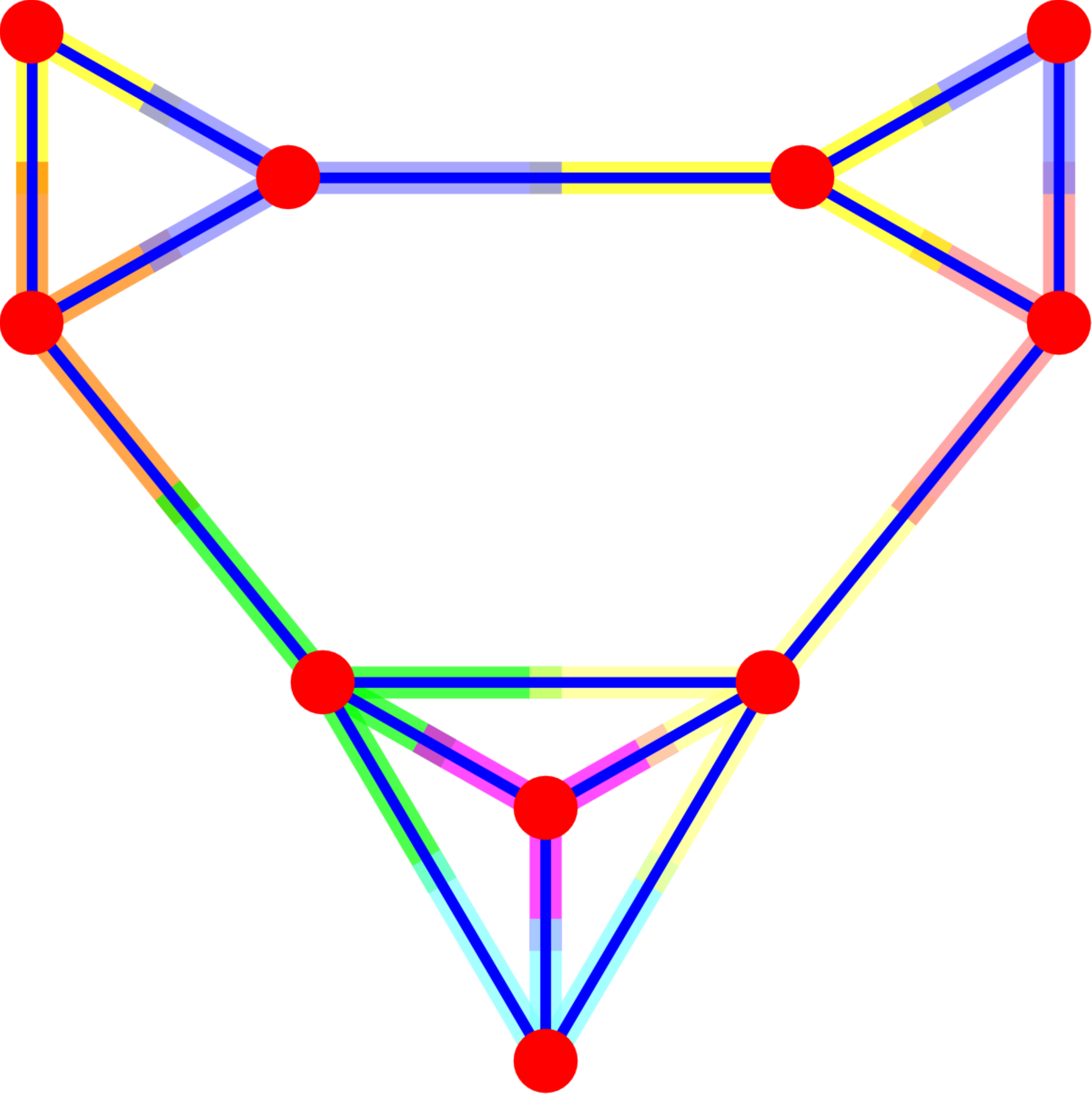}}
\caption{
The finest topology on a graph consists of star graphs centered at the
vertices. Its nerve is the graph itself. The finest topology is rarely optimal, has topological
dimension $1$ and always exists. An optimal topology for this graph is given in Figure~(\ref{poster}).
\label{fine}
}
\end{figure}

{\bf Proof of \ref{2}:} \\
Let $H$ and $G$ be two graphs which are homeomorphic with respect to a topology on $H$ generated by a subbase 
$\A$ and a subbase $\B$ for $G$. There is a lattice isomorphism between $\H$ and $\G$
and the corresponding basis elements $A \in \A$ and $B=\phi(A) \in \B$ 
have the same dimension, so that by definition, the dimension spectrum is the same. \\

{\bf Proof of \ref{3}:} \\
By definition, the graph $G$ is homotopic to the nerve graph. Since the two nerve graphs are
isomorphic, and being homotopic is an equivalence relation, the 
two homeomorphic graphs are homotopic too. \\

{\bf Remark.} The assumption of being homotopic to the nerve graph is natural: we get the nerve graph
by collapsing contractible graphs $B \in \B$ to a star graph. Each of these deformations is a 
homotopy. After having collapsed every node, we end up with the nerve graph. We only have
to makes sure for example that the nerve graph does not contain additional triangles. In
the $K_3$-free graph $C_4$ for example, we can not have three open sets in $\B$ intersecting 
each other in sets of dimension $1$.  \\

{\bf Proof of \ref{4}:} \\
Cohomology is a homotopy invariant \cite{I94} because each homotopy step is: it is straightforward
to extend cocycles and coboundaries to the extended graph and to check that the cohomology groups
do not change. 
The statement follows from the previous one. We see that the graph cohomology without
topology is the same than the cohomology with topology, which corresponds to \v{C}ech  
in the continuum. \\

{\bf Proof of \ref{5}:} \\
Because of the Euler-Poincar\'e formula, the Euler characteristic can be expressed
in cohomological terms alone $\chi(G) = \sum_{k=0}^{\infty} (-1)^k v_k = \sum_{k=0}^{\infty} (-1)^k {\rm dim}(H^k(G))$.
The result follows now from Theorem~(\ref{4}). Alternatively, this can also be checked directly 
for the combinatorial definition $\sum_{k=0}^{\infty} (-1)^k v_k$ of Euler characteristic: if we add
a new vertex $z$ over a contractible subset $Z$ of $V$, then because $S(z)=Z$ is contractible
and $B(z)$ is contractible as every unit ball is, then 
$\chi(G \cup_Z \{z\}) = \chi(G) + \chi(B(z)) - \chi(S(z)) = \chi(G) + 1-1=0$. \\

{\bf Proof of \ref{6}:} \\
Let $(G,\O)$ be the topology generated by the subbasis $\B$. 
If $G$ is not path-connected, there are two maximal subgraphs $G_1,G_2$ 
for which there is no path connecting a vertex from $G_1$ with a vertex in $G_2$ and such 
that $G_1 \cup G_2$ is $G$. 
By assumption, every edge $e \in G_1$ is contained in an open contractible set $U_e \in \B$ such
that $\B_1 = \{ U_e \; | \; e \in G_1 \}$ is a subbase of the graph $G_1$. 
and $\B_2 = \{ U_e \; | \; e \in \G_2 \}$ is a subbase of $G_2$. Both are nonempty and intersections
of $B_1 \in \B_1, B_2 \in \B_2$ are empty. 
Conversely, assume that $G$ is not connected with respect to some graph topology $(\B,\O)$.
This means that $\B$ can be split into two disjoint sets $\B_1 \cup \B_2$ for which all intersections 
$A \in \B_1$ with $B \in \B_2$ are empty. Let $G_i$ denote the subgraph 
generated by edges in $\B_i$. If there would exist a path from $G_1$ to $G_2$, then there would exist
$x \in G_1$ and a vertex $y \in G_2$ such that 
$e=(x,y)$ is an edge. By assumption, there would now be an open set $U$ containing $e$. 
But this $U$ has to belong either to $\B_1$ or to $\B_2$. This contraction shows that the 
existence of a path from $A$ to $B$ is not possible. \\

{\bf Proof of \ref{7}:} \\
We only have to verify this for a single refinement-step of an edge in which we add or remove 
an additional refinement vertex. We do not have to change the number of elements in $\B$ 
generating the topology $\O$: the new point can absorbed in each open set which contains the edge. 
{\it A refinement step in which a new vertex is put in the middle of a single edge
is a homotopy deformation.}
{\bf Proof.} Take an edge $e=(x,y)$. Do a pyramid construction over $(x,y)$ using a new vertex $z$.
Now remove the old edge. This can be done by homotopy steps
because $S(x) \cap S(y)$ is contractible. (see \cite{CYY}).  \\
For the reverse step, when removing a point, we might have to modify the topology first and
take a rougher topology. For example, lets look at the line graph $L_4$ with four vertices equipped with 
the topology generated by $\B = \{ (1,2,3),(2,3,4) \}$. This topological graph has the nerve $K_2$. 
Removing the vertex $2$ would produce $\C = \{ (1,3),(3,4) \}$ which has a disconnected nerve graph. 
But if we take the rougher topology $\B = \{ (1,2,3,4) \}$ then this becomes $\{ (1,3,4) \}$ after removing
the vertex and the homotopy reduction is continuous. \\

\begin{figure}
\scalebox{0.14}{\includegraphics{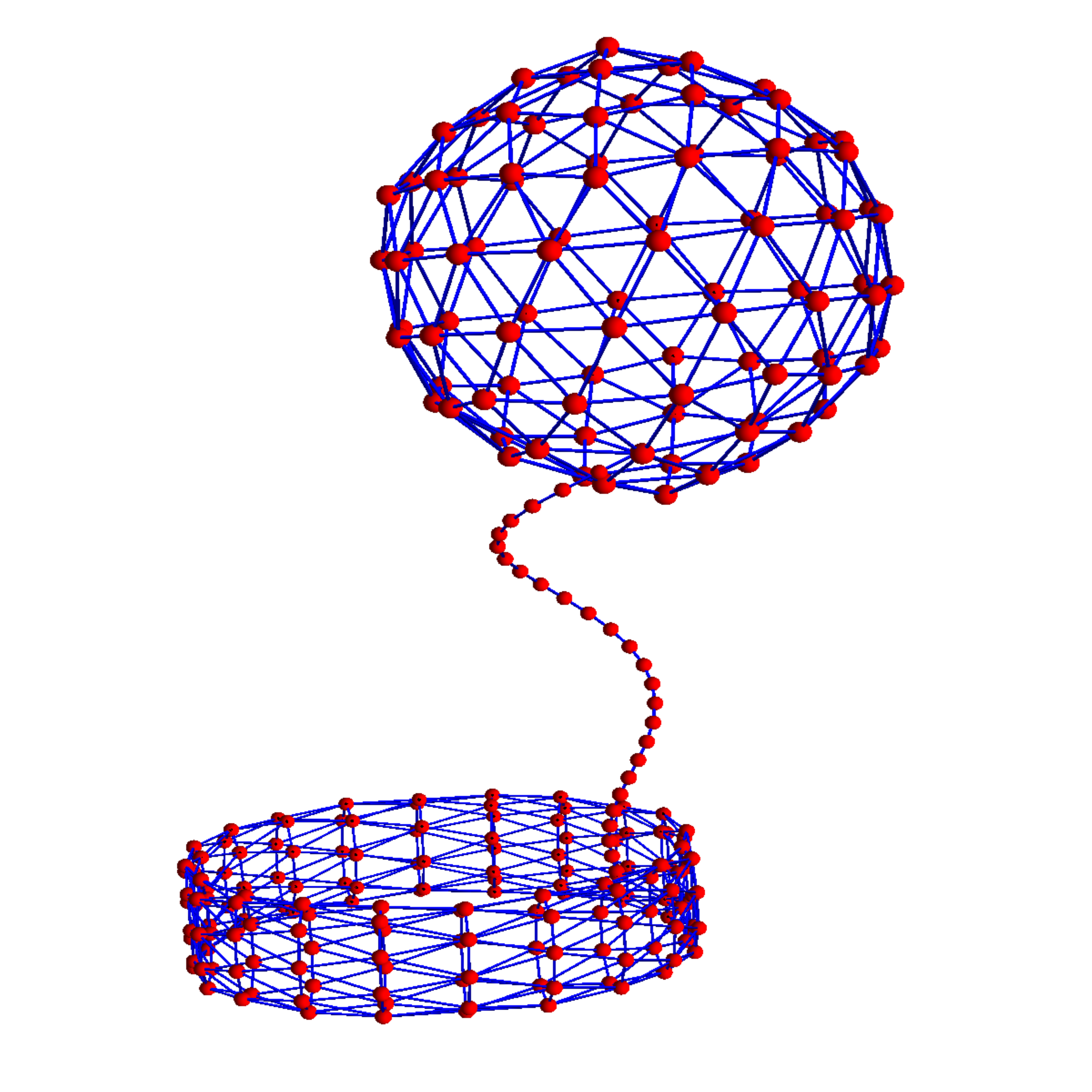}}
\scalebox{0.14}{\includegraphics{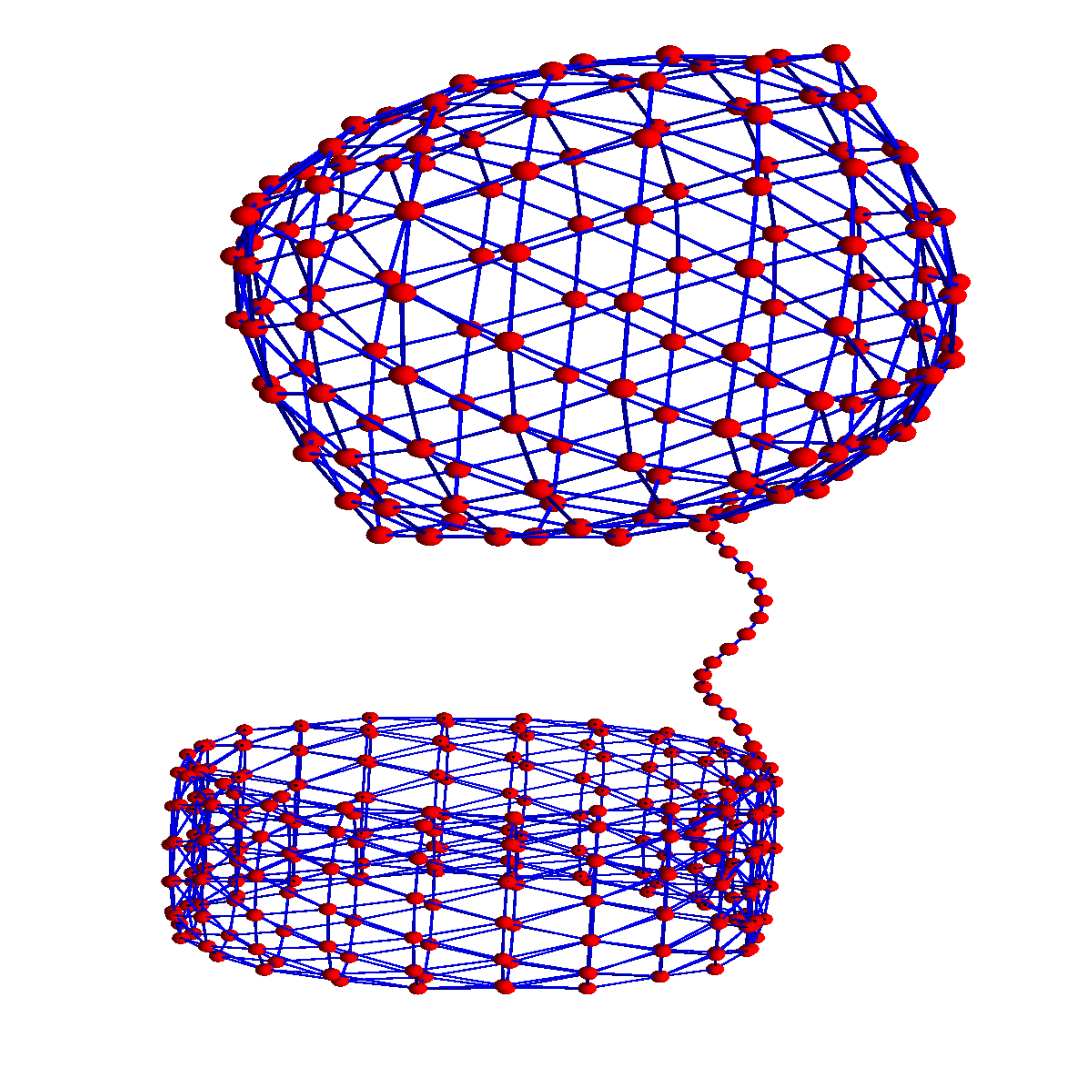}}
\caption{
Two homeomorphic graphs $H,G$. Both are discretisations of $S^2 \dot{\cup} [0,1] \dot{\cup} T^2$,
where $\dot{\cup}$ is disjoint union of two topological spaces with one point identified. 
The homeomorphism can be achieved with a sub-base consisting of 16 two-dimensional patches 
for the torus, a one-dimensional patch for the connection and 20 open balls for the sphere. 
\label{flower}
}
\end{figure}

\section{Examples}

To illustrate the notion with examples, lets introduce some more notion: 
a subgraph $K$ of $G=(V,E)$ is called {\bf dimension homogeneous} if ${\rm dim}_K(x)$ is the same for all $x \in V$. 
For example, every star graph within a graph $G$ is dimension homogeneous. 
A subgraph $K$ of $G$ is {\bf dimension-maximal} if ${\rm dim_K}(x) = {\rm dim}_G(x)$ for every $x \in V(K)$. 
A single point $K_1$ in a triangle is not dimension maximal because it has dimension $0$ by itself but dimension $2$
as a point in the triangle. A triangle is dimension maximal in an octahedron because both in the triangle as well
as in the octahedron, every point has dimension $2$. Since it can be difficult to construct for any finite simple
graph a dimension maximal basis, we don't require it in the definition.  \\

{\bf 1)} Given any finite simple graph, we can take the subbasis $\B$ of all star graphs centered at 
vertices together with the sets $\{x_i\}$ of all isolated vertices $x_i$.
All the properties for a graph topology are satisfied: the elements are contractible, the intersection with each other 
are $K_2$ graphs which are contractible and the dimension of each $B \in \B$ is $1$ and the dimension of
the intersection of two neighboring basis elements is $1$. Two star graphs of points of distance $2$
have a vertex in the intersection but the dimension assumption prevents this from counting as a link
in the nerve graph. We call the topology generated by this subbase the {\bf finest topology} on a graph.
It has the property that the nerve graph is the graph itself. Two graphs dressed with the finest topology are 
homeomorphic if and only if they are graph isomorphic. The fine topology on a graph reflects what is often
understood with a graph, a one-dimensional structure. As pointed out before, this is not what we consider a good
topology in general. Graphs are more universal and carry topologies which make them behave like higher dimensional 
spaces in the continuum. \\

{\bf 2)} Any contractible graph carries the {\bf indiscrete topology} = trivial topology 
generated by a cover $\B = \{ V \}$ which consists of one element only. 
The nerve graph of this topology is the graph $K_1$ and the covering dimension is zero. 
Two contractible graphs of the same dimension equipped with the indiscrete topology are 
homeomorphic as topological graphs. Any two trees are homeomorphic with respect to the indiscrete
topology. By the wqy, trees can be characterized as {\bf uniformly one-dimensional, contractible} graphs
because one-dimensional graphs are determined by the classical notion of homeomorphism in which the 
genus $g$ is the only invariant. \\

{\bf 3)} The {\bf cycle graph} $C_6$ has a topology with the $6$ elements 
$\B = \{$ $(1,2,3)$, $(2,3,4)$, $(3,4,5)$, $(4,5,6)$, $(5,6,1)$, $(6,1,2)$ $\} \;$.
We can not take $\B_0 = \{$ $(1,2,3,4)$, $(3,4,5,6)$, $(5,6,1,2)$ $\}$ because its nerve is not homotopic.
We can not take $\B_1 = \{$ $(1,2)$, $(2,3)$, $(3,4)$, $(4,5)$, $(5,6)$, $(6,1)$ $\}$ because the nerve
is zero-dimensional, not homotopic and also the intersection dimension assumption fails. 
The basis elements in $\B$ are dimension maximal and dimension homogeneous.
Two cyclic graphs $C_n,C_m$ are homeomorphic if $n,m \geq 4$:
to illustrate this with $C_4$ and $C_5$, take
$\B = \{ \{1,2,3 \}, \{2,3,4 \}$, $\{3,4,1 \}, \{1,2,3 \} \}$ for $C_4$ and 
$\C = \{ \{1,2,3,4 \}, \{3,4,5\}$, $\{4,5,1 \}, \{5,1,2 \} \}$ for $C_5$. For $\B$ on $C_4$,
there are sets which intersect in a non-contractible way but we have assumed this not to count. The nerve
graph of the topology generated by $\B$ is $C_4$ itself. \\

{\bf 4)} Make a pyramid construction over an edge $(1,2)$ of a {\bf cycle graph} $C_4$. This is a homotopy step. 
The new graph $G$ has now a 5'th vertex $5$ and the new dimension is $22/15$ like the bull graph. 
The topology generated by $\B = \{(1,2,5)$, $(5,2,3)$, $(2,3,4)$, $(4,1,5)$  $\}$
is optimal. Its dimension spectrum is $\{ 2,1,1,1 \}$ and the topological dimension is the average $5/4$.
This example shows that the topological dimension is not the same than the inductive dimension. The 
topological dimension depends on the topology.  The example also illustrates that $C_4$ and $G$ are not 
homeomorphic even so they are homotopic. 
Finally, lets look at a subbasis $\B_1 = \{$ $(1,2,3)$, $(2,3,4)$, $(3,4,1,5)$, $(1,5,2)$ $\}$. It produces a topology
but not an optimal topology. \\

{\bf 5)} For an {\bf octahedron}, we can take $\B$ as the set of unit balls. The two unit balls of antipodes intersect
in a circular graph but the nerve graph is the octahedron itself because the dimension assumption prevents antipodal 
points to be connected. The set $\B$ does indeed define a topology. 
Also the {\bf icosahedron}  has a 
topology generated by the $20$ unit balls of radius $1$.  More generally, for any fine enough 
triangularization of the two-dimensional sphere, we can take for $\B$ a set of unit balls. 
When taking the set of balls of radius $2$ as the cover for the octahedron we see that the 
Icosahedron and Octahedron are homeomorphic with respect to natural optimal topologies. \\

{\bf 6)} A {\bf sun graph} $G=S_{1,1,1}$ over a triangle has a topology which consists of 4 sets. We can
take the triangle $(1,2,3)$ and the sets $(1,2,3,4)$, $(1,2,3,5)$, $(2,2,3,6)$. 
The nerve graph is the star graph $S_3$. This sub-basis is dimension maximal but 
not dimension homogeneous. Since that sun graph is contractible, we can also take the indiscrete
topology on $G$.  \\

{\bf 7)}  Take a $a$-dimensional simplex and connect it with a line graph with $n$ vertices 
$b$-dimensional simplex. The graph has $a+b+n$ vertices with $a$ vertices of dimension $a$
and $b$ vertices of dimension $b$ and $n-2$ vertices of dimension $1$ and
one vertex of dimension $1+a(a-1)/(a+1)$ and one vertex of dimension $1+b(b-1)/(b+1)$.
The dimension of such a {\bf dumbbell graph} $G_{a,b,n}$ therefore is
${\rm dim}(G_{a,b,n}) = (a^2 + b^2  + n + a(a-1)/(a+1) + b(b-1)/(b+1))/(a+b+n)$. 
Since ${\rm dim} G_{3,4,3}={\rm dim} G_{3,7,15}=319/100$, 
these two graphs are homeomorphic with the indiscrete topology. 
We can not find topologies on on these two graphs which are dimension homogeneous and
for which the graphs are homogeneous.  \\

{\bf 8)} Any {\bf sun graph} $S_{a_1, \dots, a_n}$ with $n \geq 4$ obtained by taking a cyclic graph and attaching
line graphs of length $a_i$ at the vertex $x_i$ is strongly homeomorphic to $C_n$. 
The reason is that every point of such a graph has dimension $1$. The topology is illustrated in 
Figure~(\ref{example1}) and is optimal. Since any two graphs $C_n$ are homeomorphic, all sun graphs are
homeomorphic with respect to this topology. The equivalence class is the topological circle. We can 
also find other topologies, for which the nerve graph is again a sun graph. This is in particular the
case for the discrete topology generated by star graphs attached to vertices. \\

{\bf 9)} Any two {\bf wheel graphs} $W_n$ with $n \geq 4$ are homeomorphic with respect to the 
trivial indiscrete topology. More generally, any two contractible graphs for which every point is two-dimensional 
are homeomorphic. This includes the triangle $K_3$. The equivalence class is the topological disc. 
More natural topologies are the topologies generated by open balls in $W_n$. In that case the nerve
graph is again $W_n$. \\

\begin{figure}[H]
\scalebox{0.14}{\includegraphics{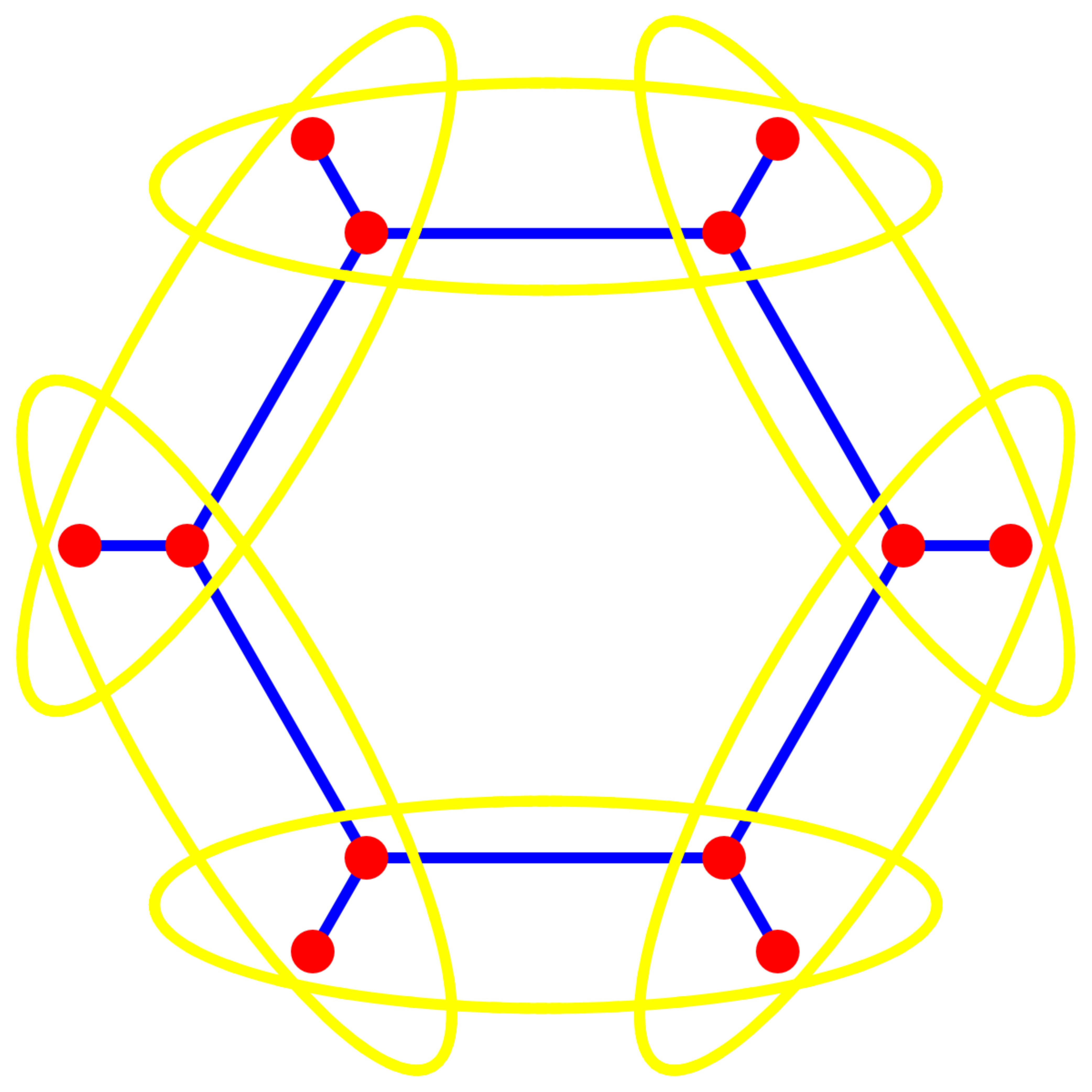}}
\scalebox{0.14}{\includegraphics{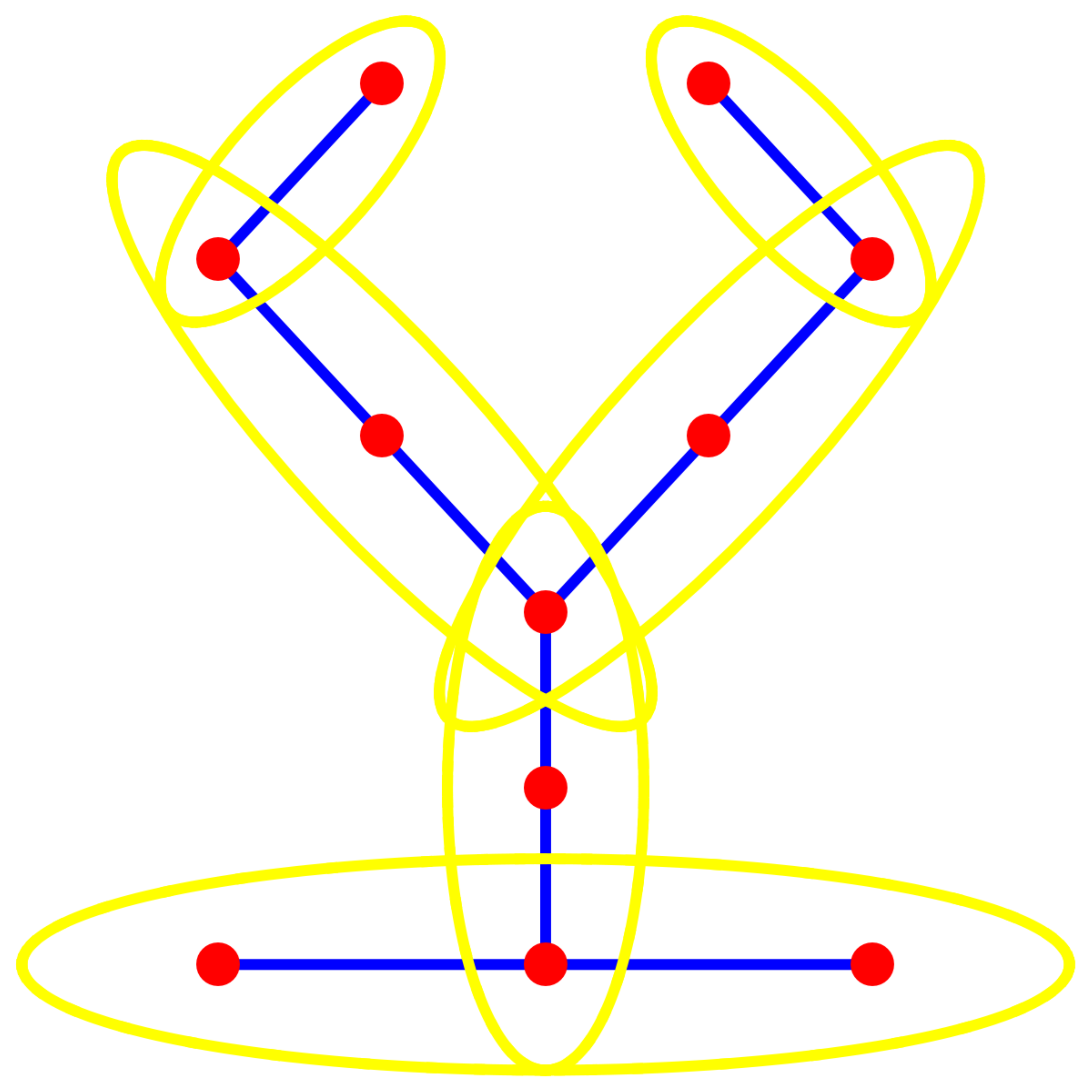}}
\caption{
The left graph is a one-dimensional sun graph $G=S_{1,1,1,1,1,1}$ with a topology $\B$ 
rendering it to be homeomorphic to $C_{6}$, where the later is equipped with the topology generated by 
unit balls $\B$. The right graph is an example of a tree. There are 6 sets drawn, but
this is not a sub basis because intersections are zero-dimensional, but we can merge them to
get a subbase with three elements $B_1,B_2,B_3$.
The two graphs are one-dimensional. There is no topology which makes the second graph $G_2$ 
homeomorphic to the first graph $G_1$ because $\chi(G_1)=0$ and $\chi(G_2)=1$.
\label{example1}
}
\end{figure}

{\bf 10)} Start with any connected finite simple graph $H=(V,E)$ and subdivide every edge with a vertex.
The new graph $G$ with $|V|+|E|$ vertices and $2|E|$ edges is uniformly one-dimensional. 
The Euler characteristic is $\chi(G) = |V|-|E| = 1-b_1$. If it is simply connected ($b_1=0$), then it
is a tree. Any two trees are homeomorphic: because the dimension is uniform $1$, we can go with 
the indiscrete topology. The indiscrete topology on a tree is too weak however. Better and more
natural is the topology generated by a subbase $\B$ consisting of star graphs. With respect to 
this topology, two trees are homeomorphic if and only if they are $1$-homeomorphic. \\

{\bf 11)} Take an octahedron and connect two opposite vertices $a,b$. This new graph is a contractible 
three-dimensional graph of Euler characteristic $1$ ($=6-13+12-4$). The intersection of 
two unit balls $B_1(a),B_1(b)$ is the graph itself. The graph has the indiscrete topology
as an optimal topology. \\

{\bf 12)} Any two connected {\bf trees} are strongly equivalent: since they are contractible and uniformly 
of dimension $1$, we can chose the fine topology generated by star graphs centered at vertices.
This is an optimal topology. It is also dimension faithful: ts nerve graph 
is the same tree homeomorphic to $G$. By extending the paths at the star graphs, we can get
topologies for trees 1-homeomorphic to $G$. For trees, the topology generated by unit 
balls of radius $1$ is the discrete topology. \\

{\bf 13)} Lets look at some smaller concrete graphs. Among the 24 graphs under consideration 
there are $4$ graphs with Euler characteristic $\chi=0$: the {\bf cycle}, the {\bf hole}, 
the {\bf house} and the {\bf sun}.
The cycle and sun graph are homeomorphic with optimal topologies.
There is no way to have more relations among those graphs because of dimension constraints. 
The house has some $2$-dimensional component and the hole is uniformly $2$-dimensional. 
Then there are graphs which by Euler characteristic alone are topologically distinguished
from the others: the {\bf prism} with $\chi=2$ is a discrete sphere, the {\bf utility graph} 
with $\chi=-3$, the {\bf snub cube} with $\chi=-4$, the {\bf Petersen graph} with $\chi=-5$, 
the {\bf dihedral graph} with $\chi=-6$ and the {\bf snub octahedron} with $\chi=-10$. 
Then, there are two graphs which by dimension alone are distinguished: 
the complete {\bf hyper-tetrahedron} $K_5$ is uniformly $4$-dimensional and not equivalent
to anything else. The {\bf tetrahedron} is uniformly $3$-dimensional and not
equivalent to any thing else. The {\bf lollipop} has a $3$-dimensional and $1$-dimensional
component and is distinguished from anything else. 
The {\bf kite}, the {\bf gem}, the {\bf gate}, the {\bf wheel}, and {\bf Hex} 
all are homeomorphic and form the equivalence class of a $2$-dimensional topological disc. 
This is also true for the {\bf fly}, even so in a bit unnatural way: we have to take the weak 
topology with one set, the unit ball of the center. This is not a geometric graph since the unit sphere
of the central point has Euler characteristic 2 and not 0 as demanded for the interior
of $2$-dimensional geometric graphs. It is also not a geometric graph with boundary: the
later class has at every point a sphere of Euler characteristic $0$ (interior) or $1$ 
(boundary). The {\bf fork} and {\bf star} are homeomorphic with respect to a weak optimal topology
(the discrete topology). What remains is the {\bf cricket}, the {\bf dart} and the {\bf bull}. 
They all have $2-$ and $1$-dimensional components. 
There are weak but optimal topologies which render these three graphs homeomorphic. 

\parbox{15cm}{
\parbox{3.6cm}{\scalebox{0.061}{\includegraphics{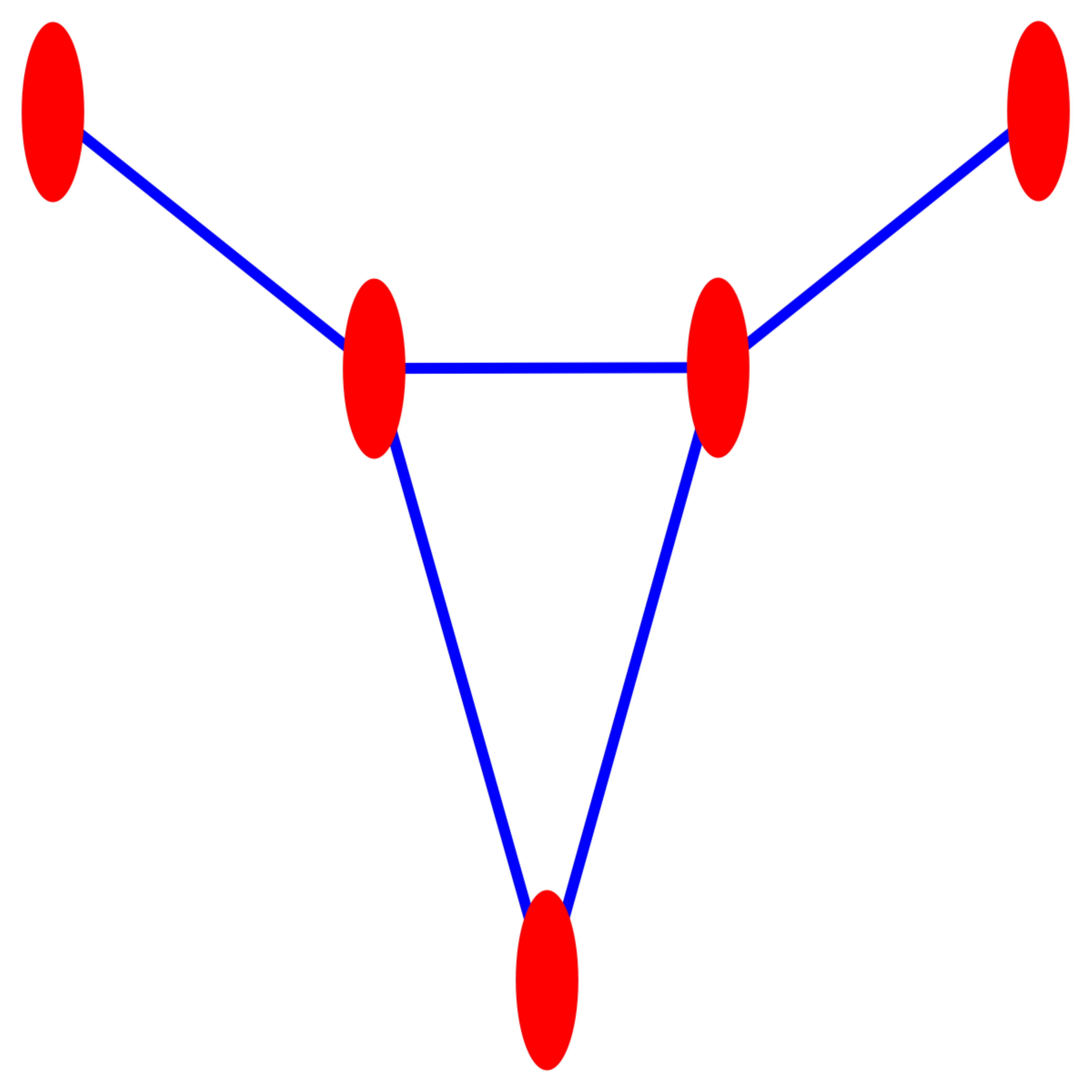}}}
\parbox{3.6cm}{\scalebox{0.061}{\includegraphics{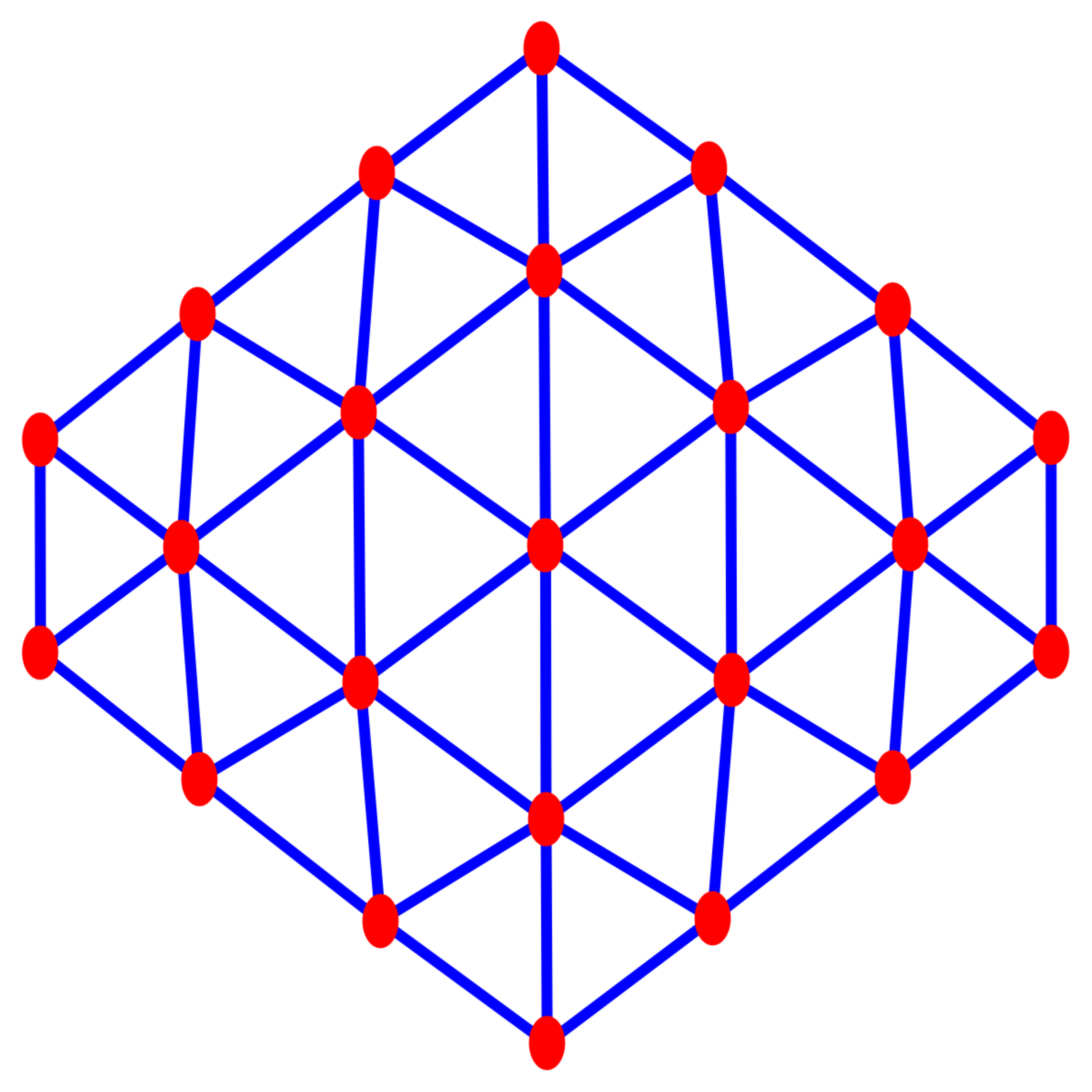}}}
\parbox{3.6cm}{\scalebox{0.061}{\includegraphics{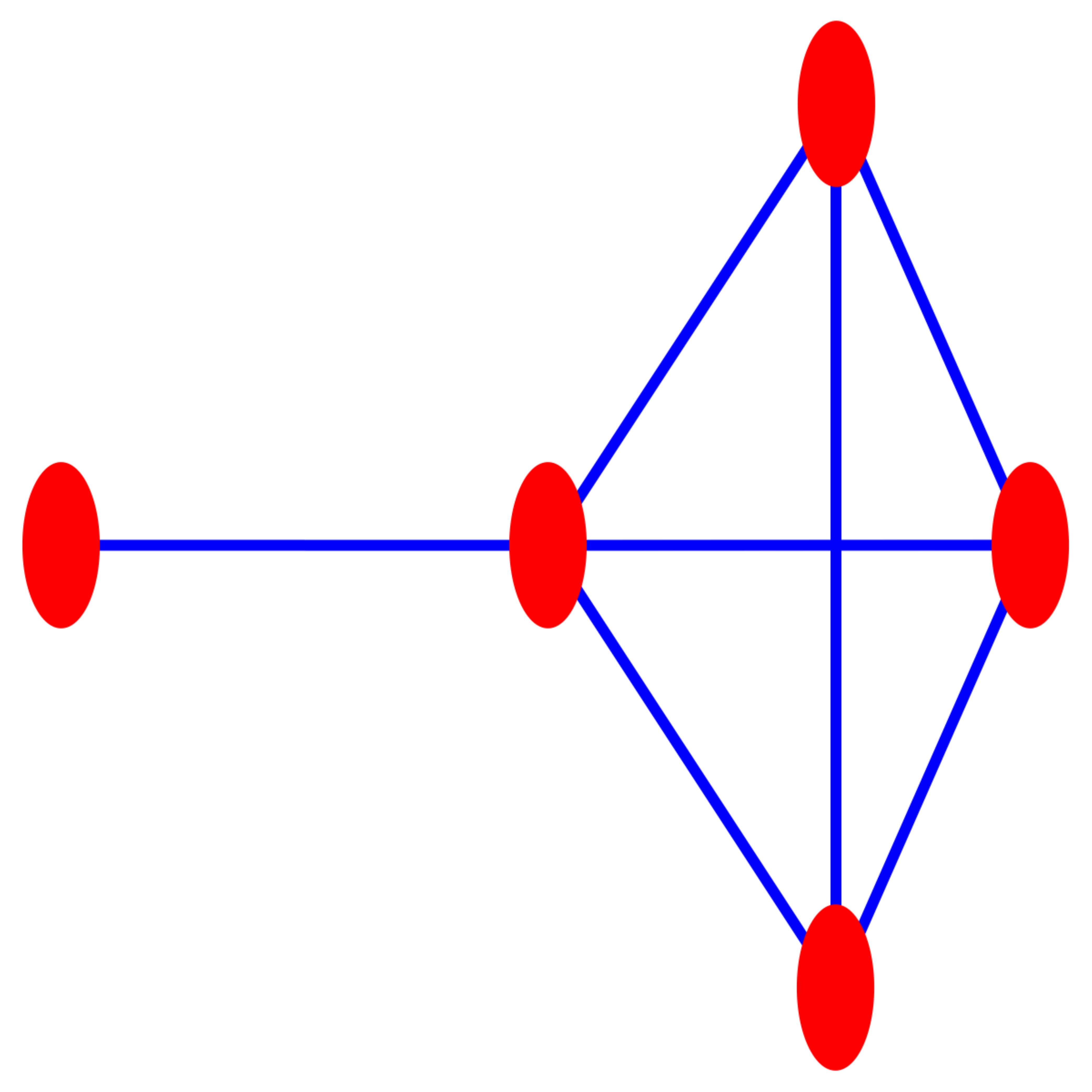}}}
\parbox{3.6cm}{\scalebox{0.061}{\includegraphics{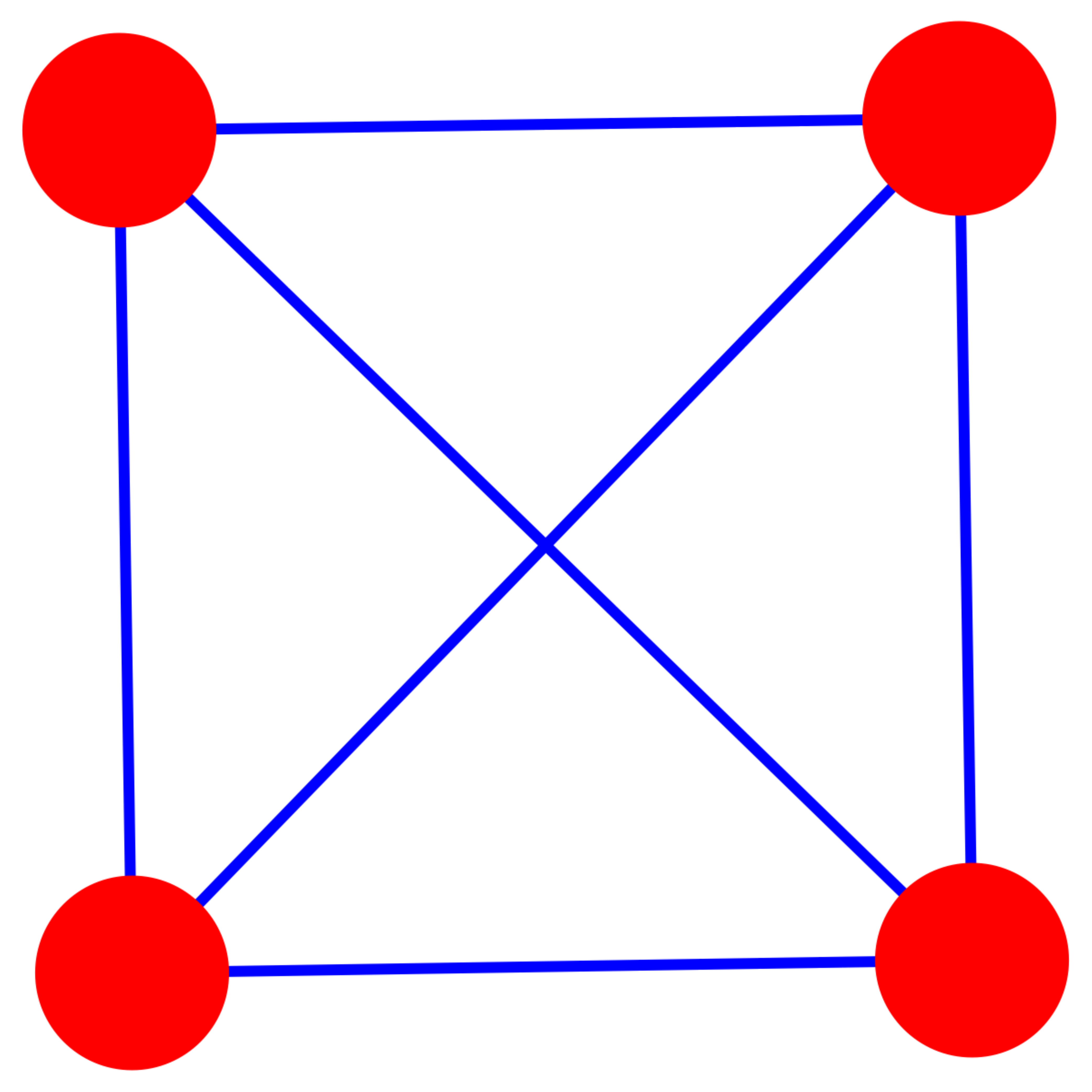}}} }
\parbox{15cm}{
\parbox{3.6cm}{ 1) Bull} \parbox{3.6cm}{ 2) Hex region} \parbox{3.6cm}{ 3) Lollipop} \parbox{3.6cm}{ 4) Tetrahedron} }
\parbox{15cm}{
\parbox{3.6cm}{\scalebox{0.061}{\includegraphics{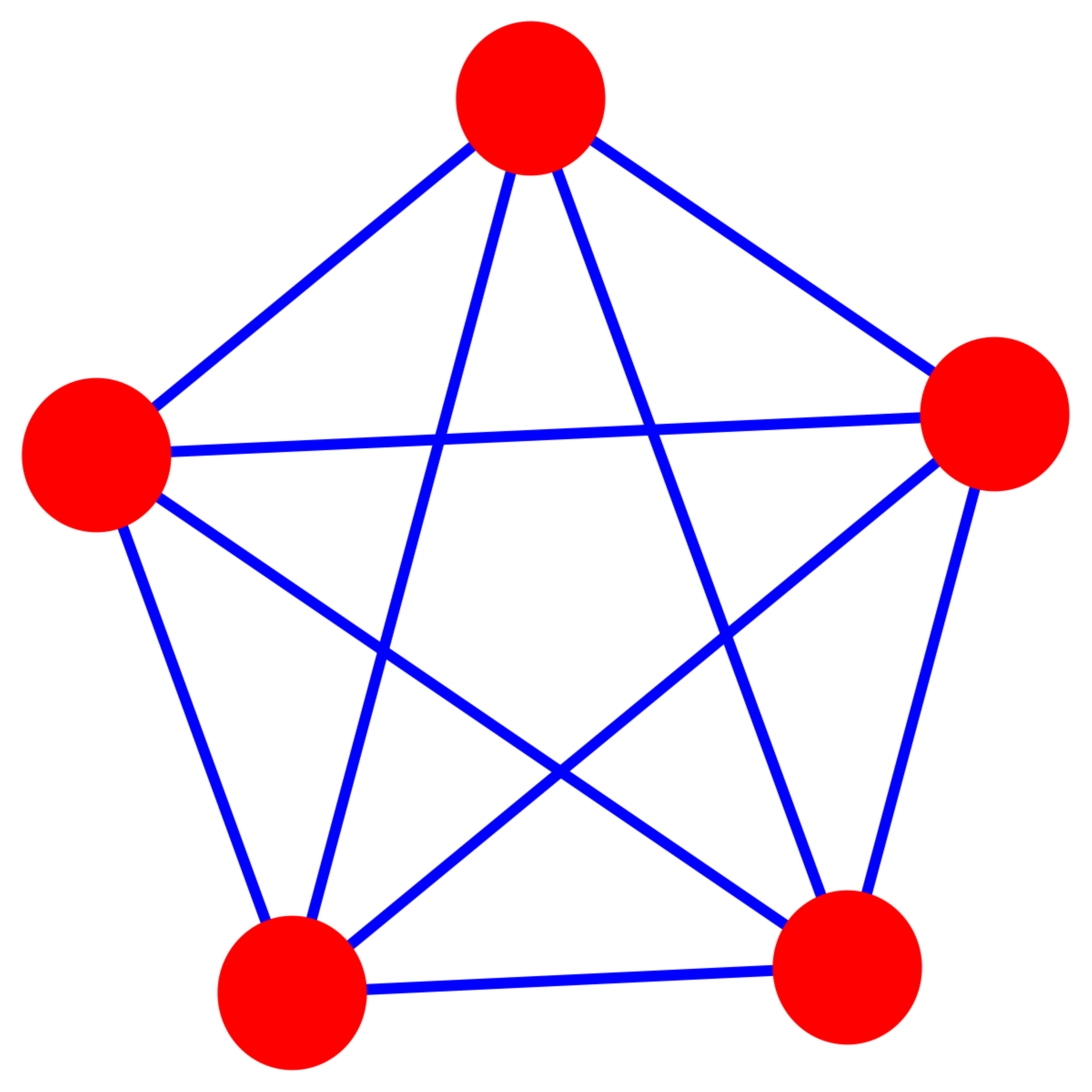}}}
\parbox{3.6cm}{\scalebox{0.061}{\includegraphics{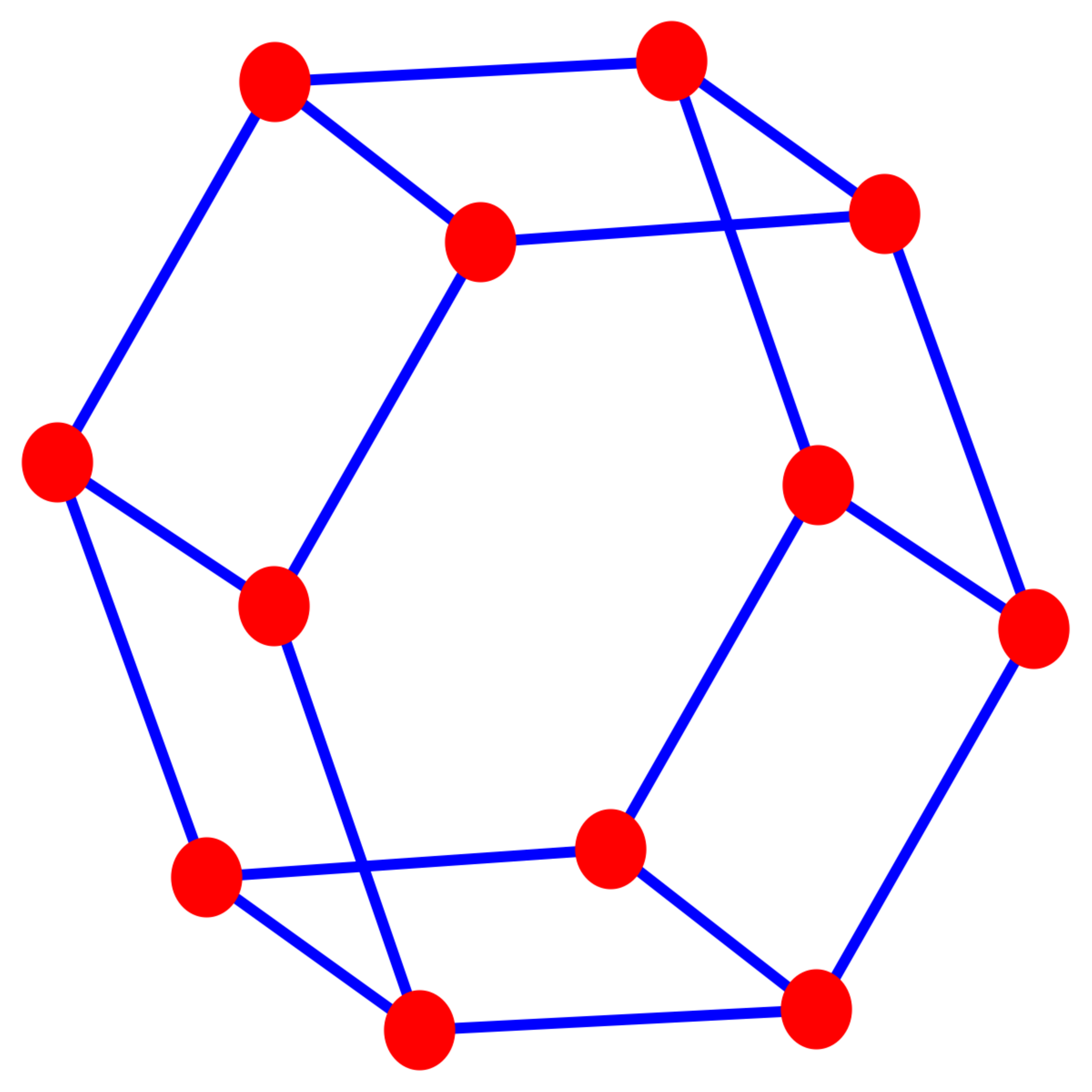}}}
\parbox{3.6cm}{\scalebox{0.061}{\includegraphics{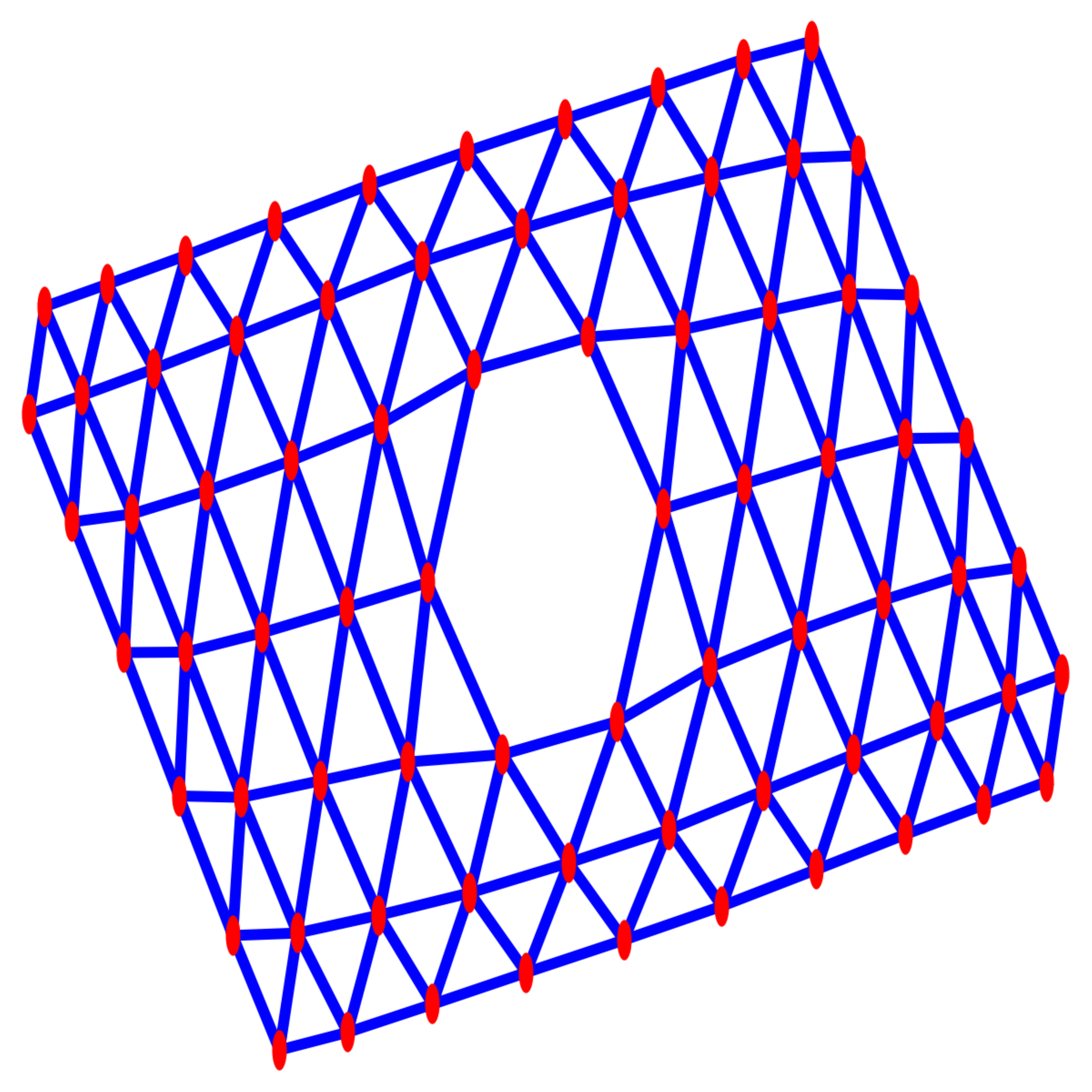}}}
\parbox{3.6cm}{\scalebox{0.061}{\includegraphics{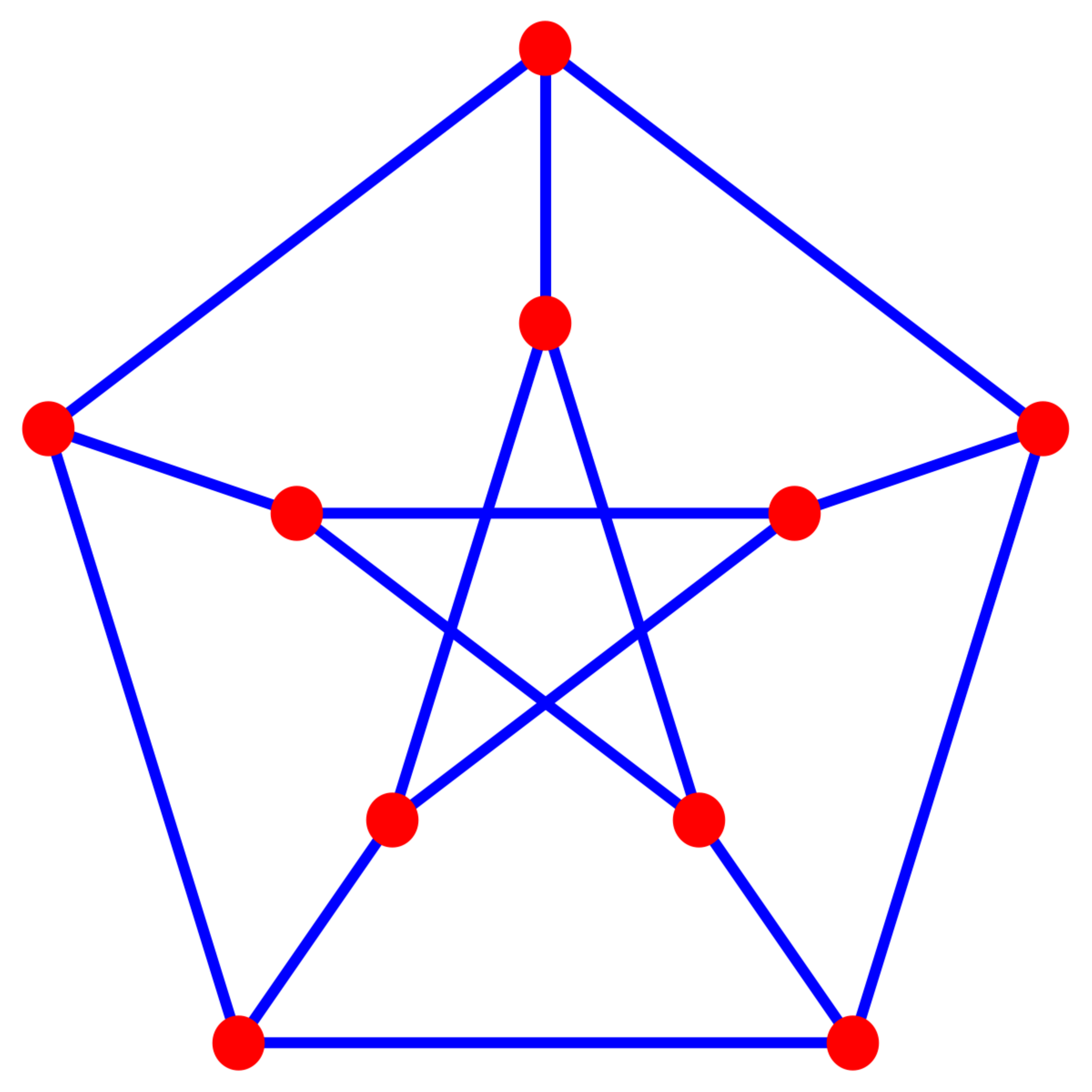}}} }
\parbox{15cm}{
\parbox{3.6cm}{ 5) Complete}      \parbox{3.6cm}{ 6) Dihedral} \parbox{3.6cm}{ 7) Hole}  \parbox{3.6cm}{ 8) Petersen}     }
\parbox{15cm}{
\parbox{3.6cm}{\scalebox{0.061}{\includegraphics{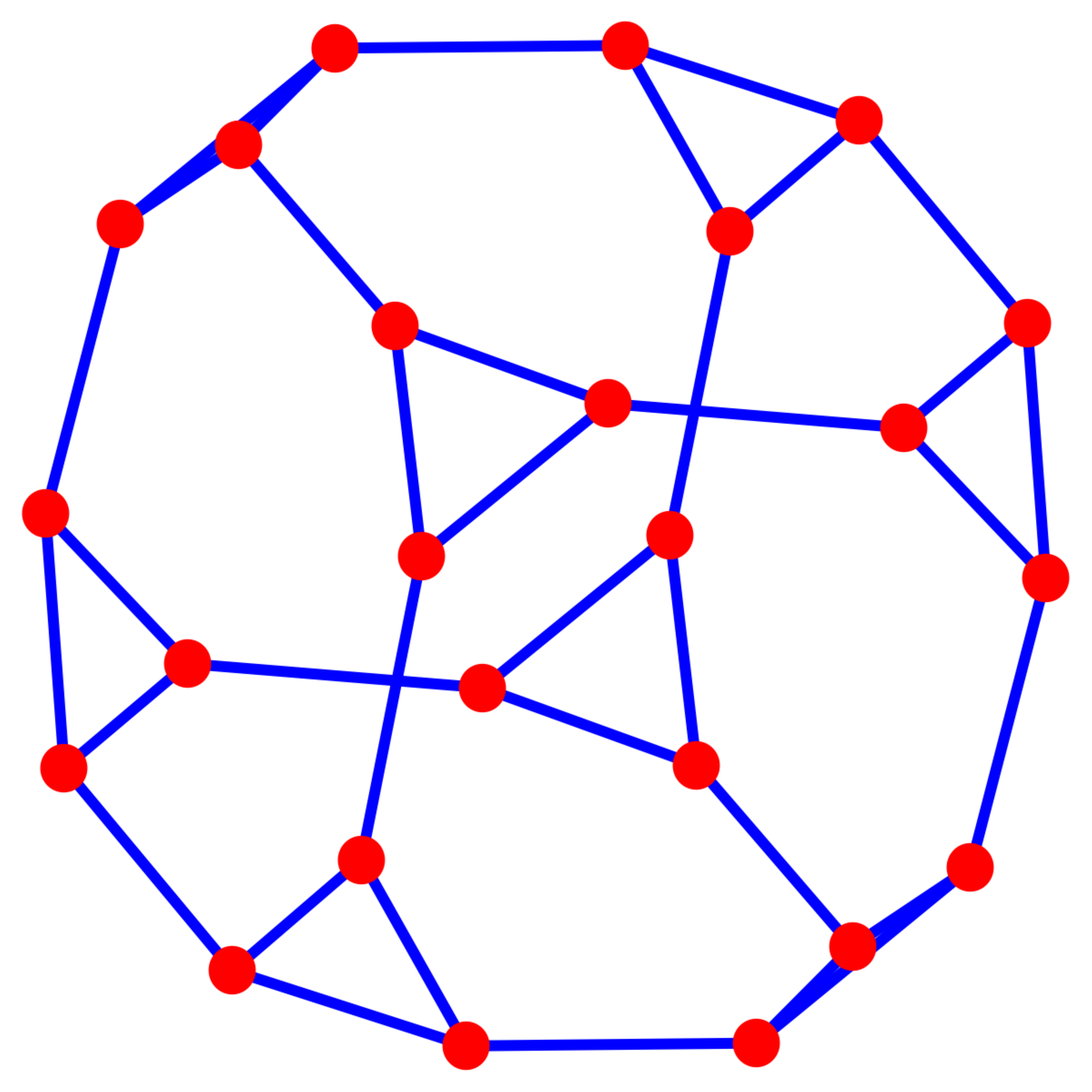}}}
\parbox{3.6cm}{\scalebox{0.061}{\includegraphics{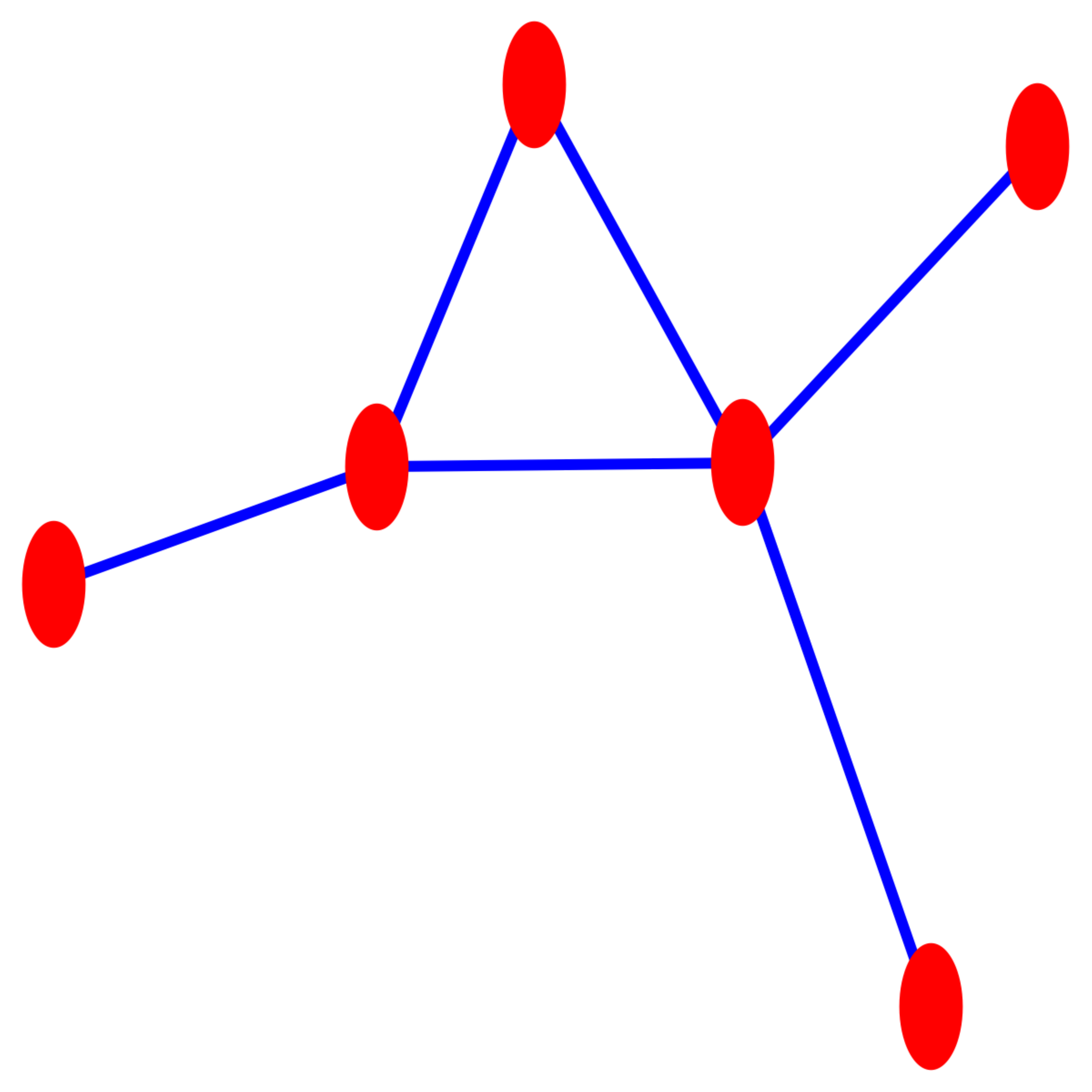}}}
\parbox{3.6cm}{\scalebox{0.061}{\includegraphics{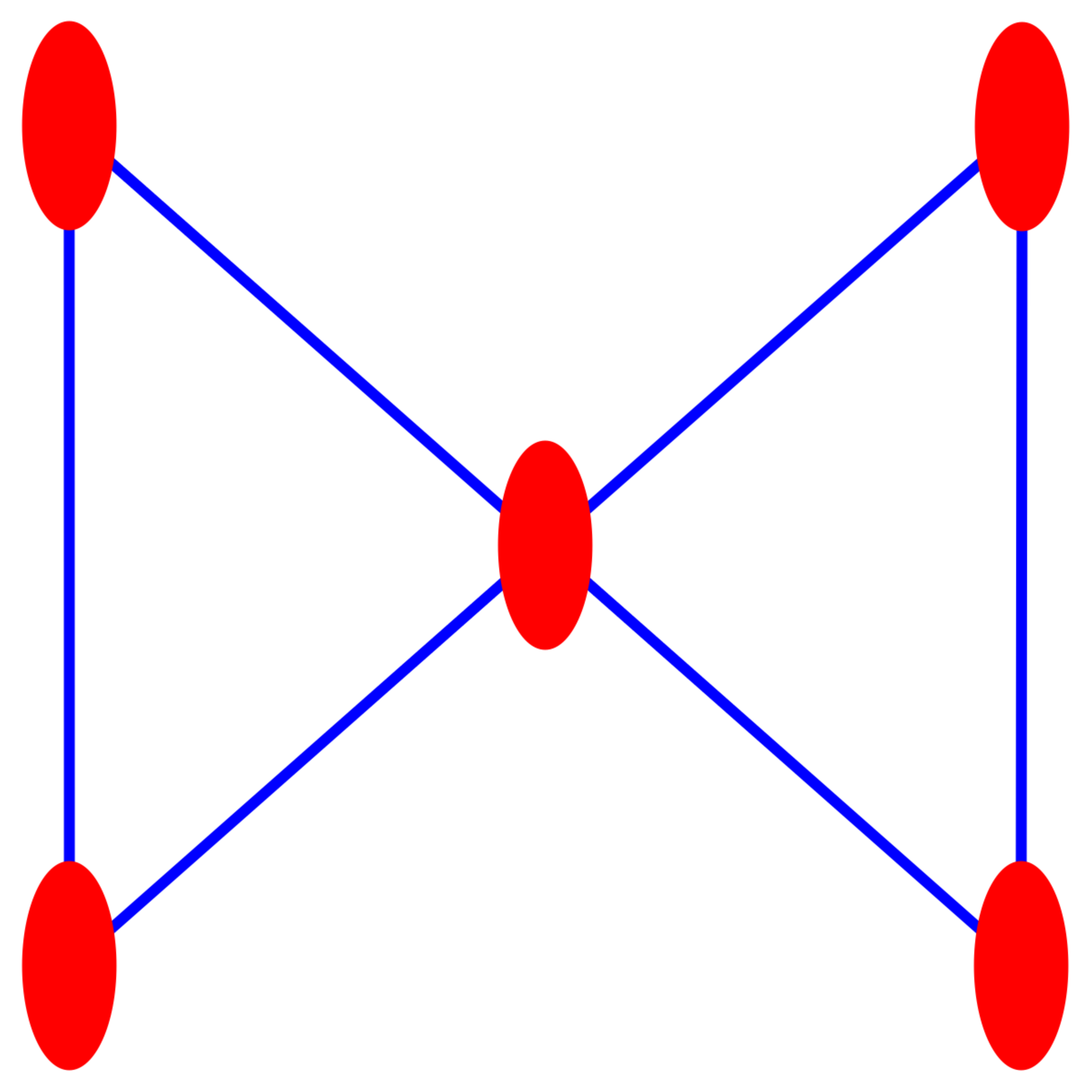}}}
\parbox{3.6cm}{\scalebox{0.061}{\includegraphics{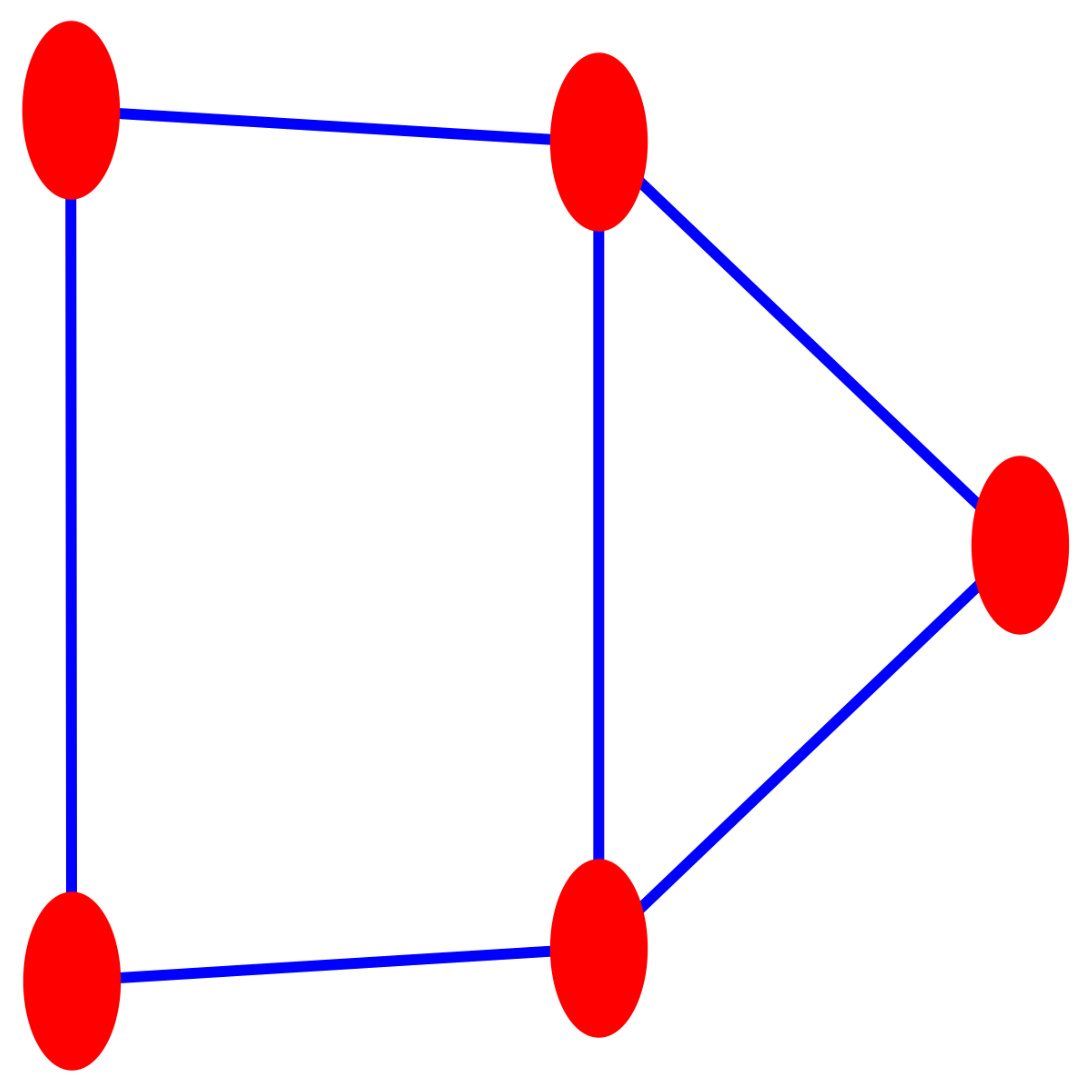}}} }
\parbox{15cm}{
\parbox{3.6cm}{ 9) Snub Cube} \parbox{3.6cm}{ 10) Cricket} \parbox{3.6cm}{ 11) Fly}    \parbox{3.6cm}{ 12) House} }
\parbox{15cm}{
\parbox{3.6cm}{\scalebox{0.061}{\includegraphics{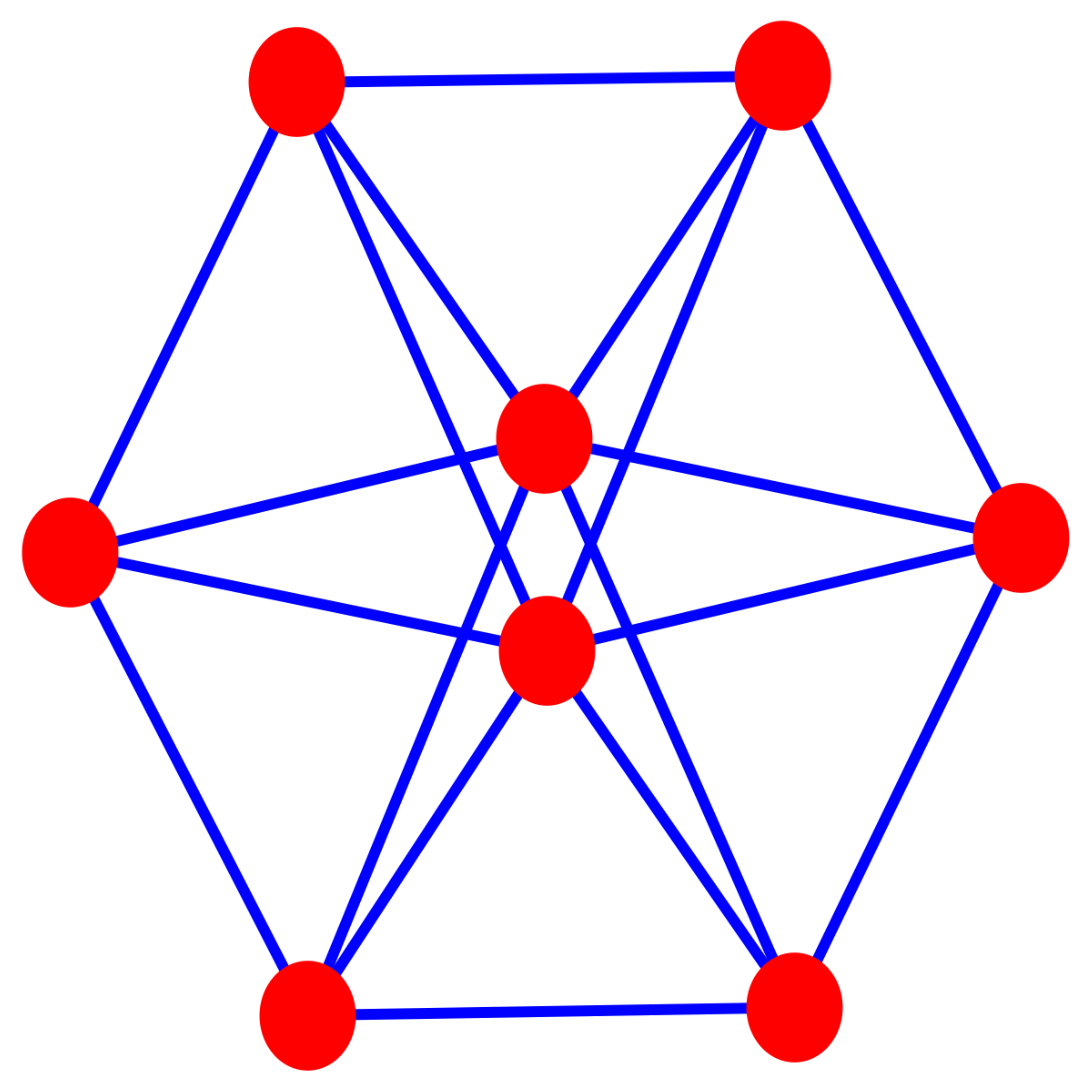}}}
\parbox{3.6cm}{\scalebox{0.061}{\includegraphics{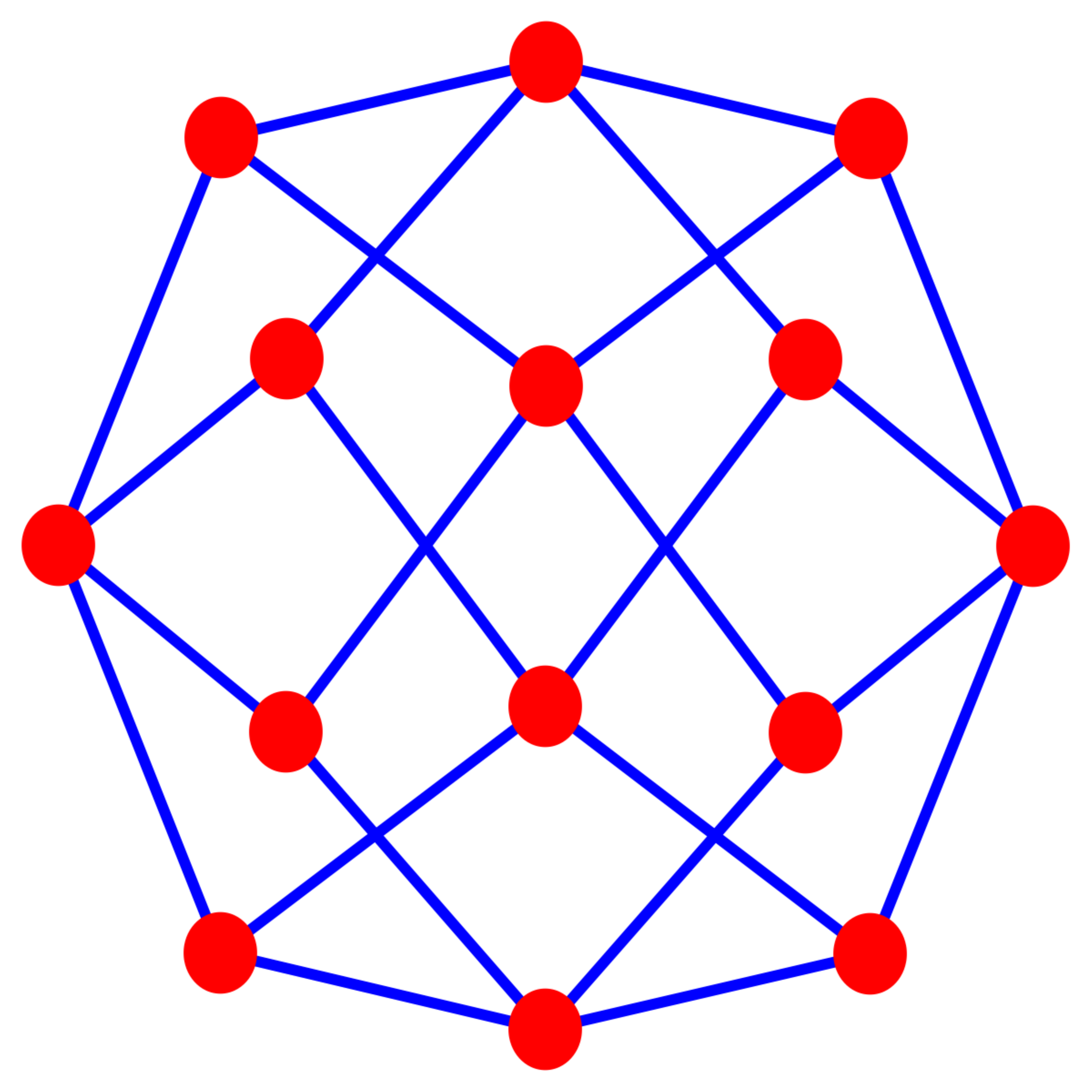}}}
\parbox{3.6cm}{\scalebox{0.061}{\includegraphics{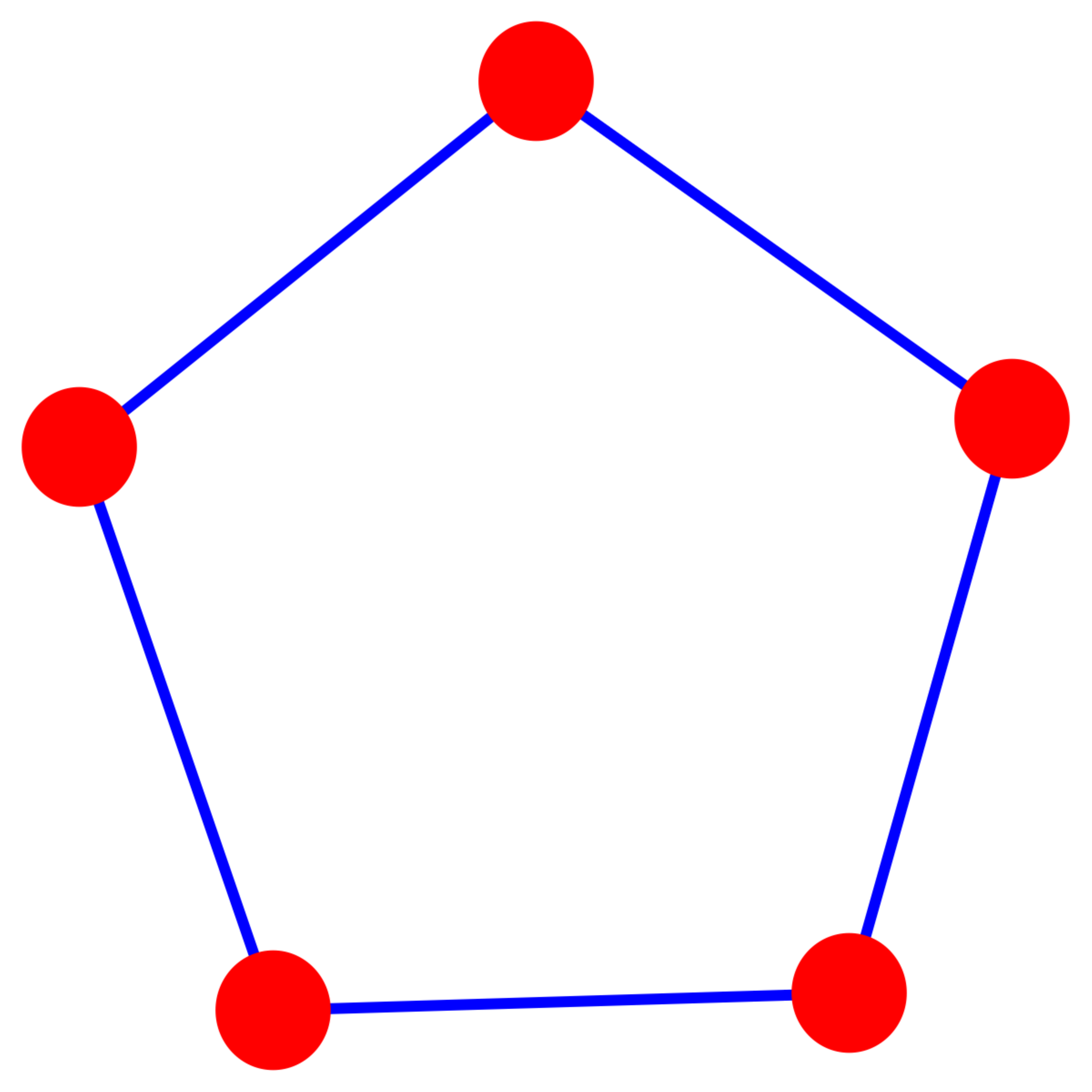}}}
\parbox{3.6cm}{\scalebox{0.061}{\includegraphics{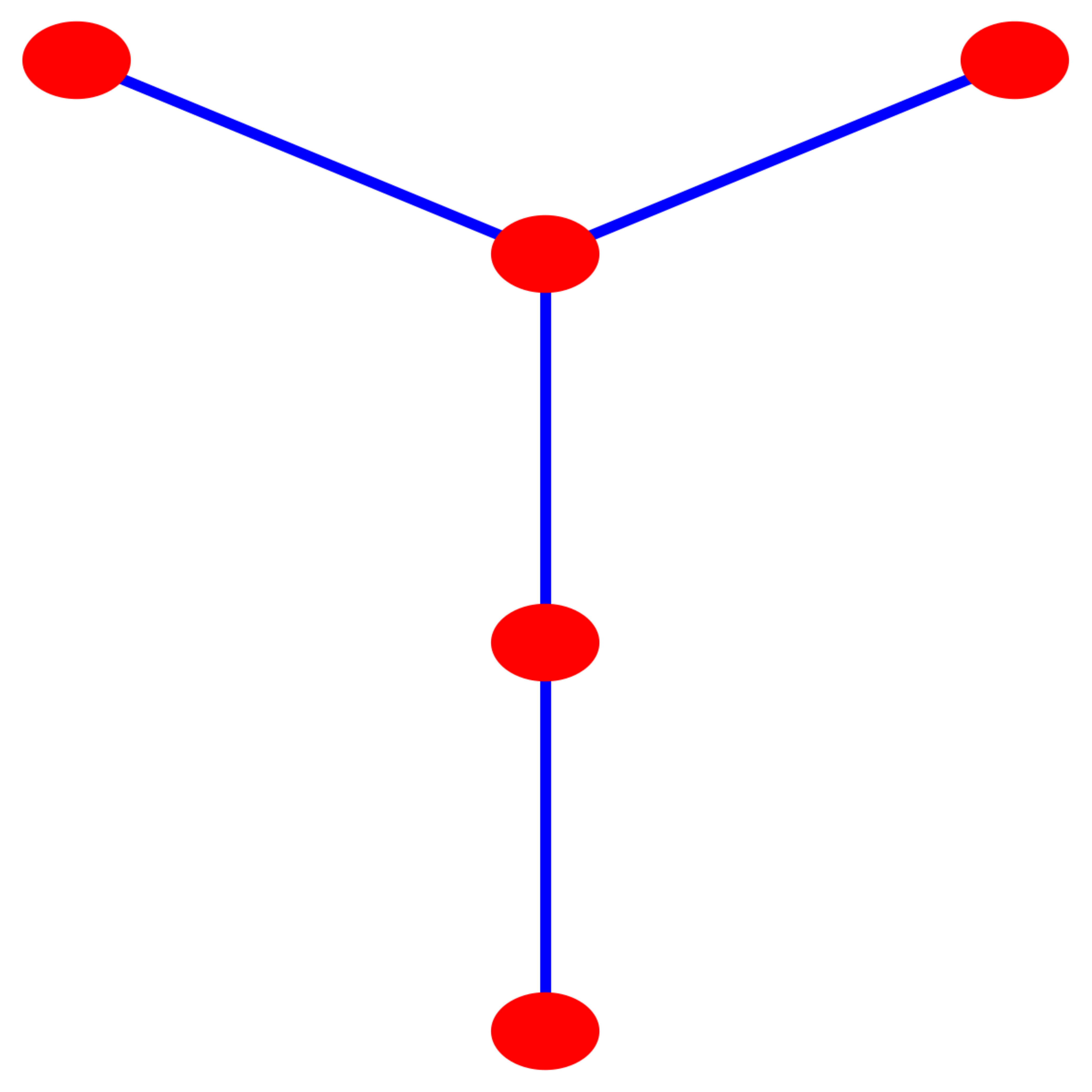}}} }
\parbox{15cm}{
\parbox{3.6cm}{ 13) Prism} \parbox{3.6cm}{ 14) Snub Oct} \parbox{3.6cm}{ 15) Cycle} \parbox{3.6cm}{ 16) Fork} }
\parbox{15cm}{
\parbox{3.6cm}{\scalebox{0.061}{\includegraphics{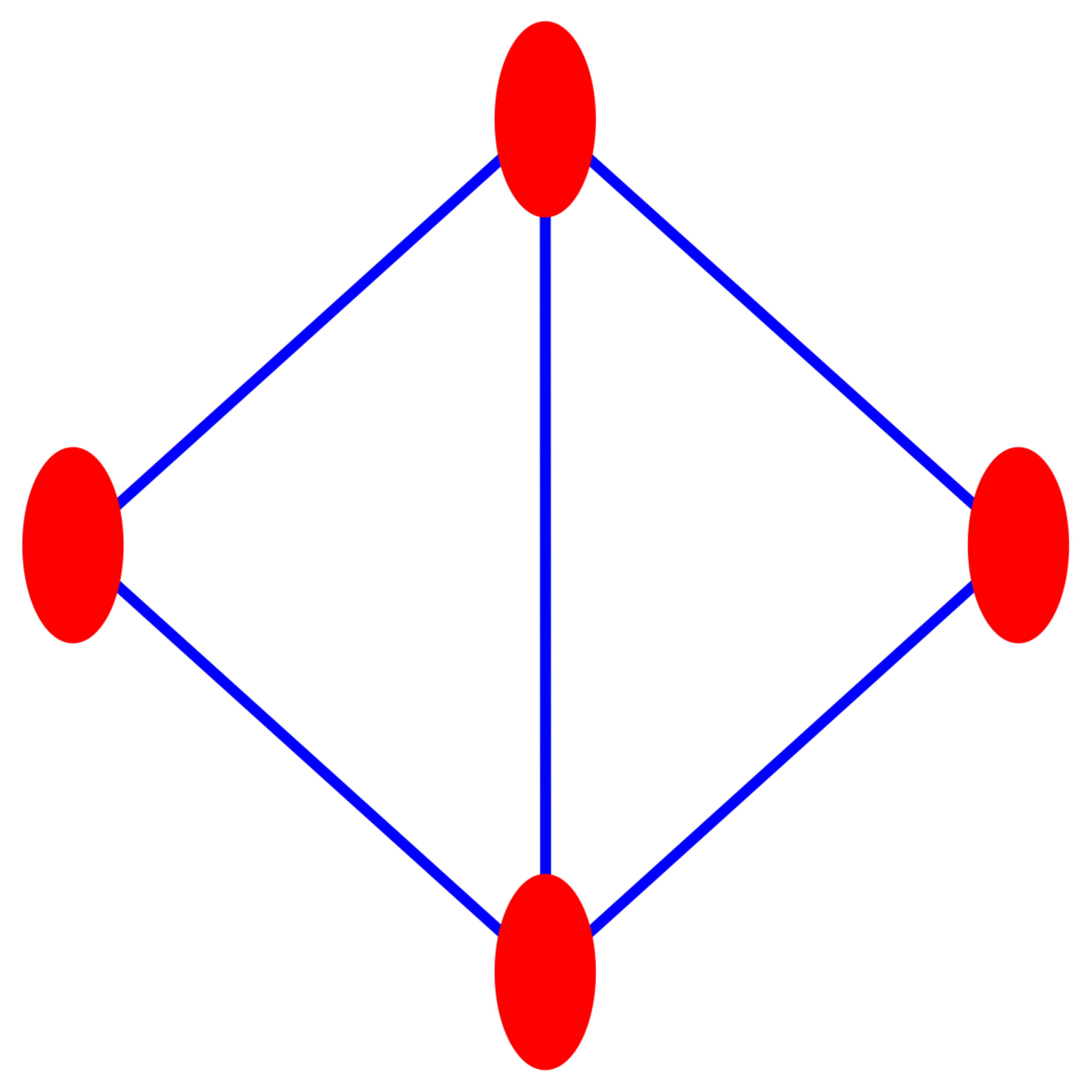}}}
\parbox{3.6cm}{\scalebox{0.061}{\includegraphics{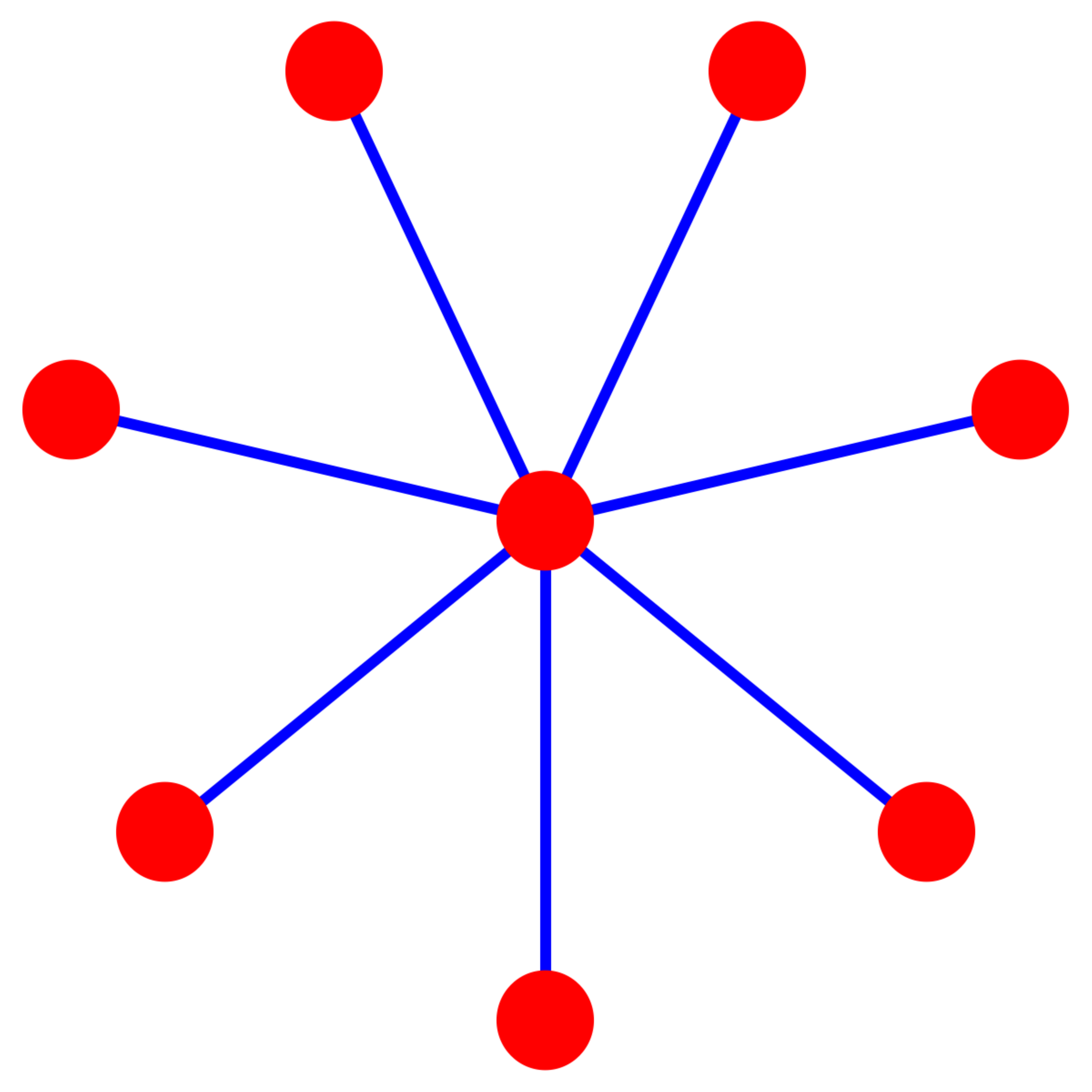}}}
\parbox{3.6cm}{\scalebox{0.061}{\includegraphics{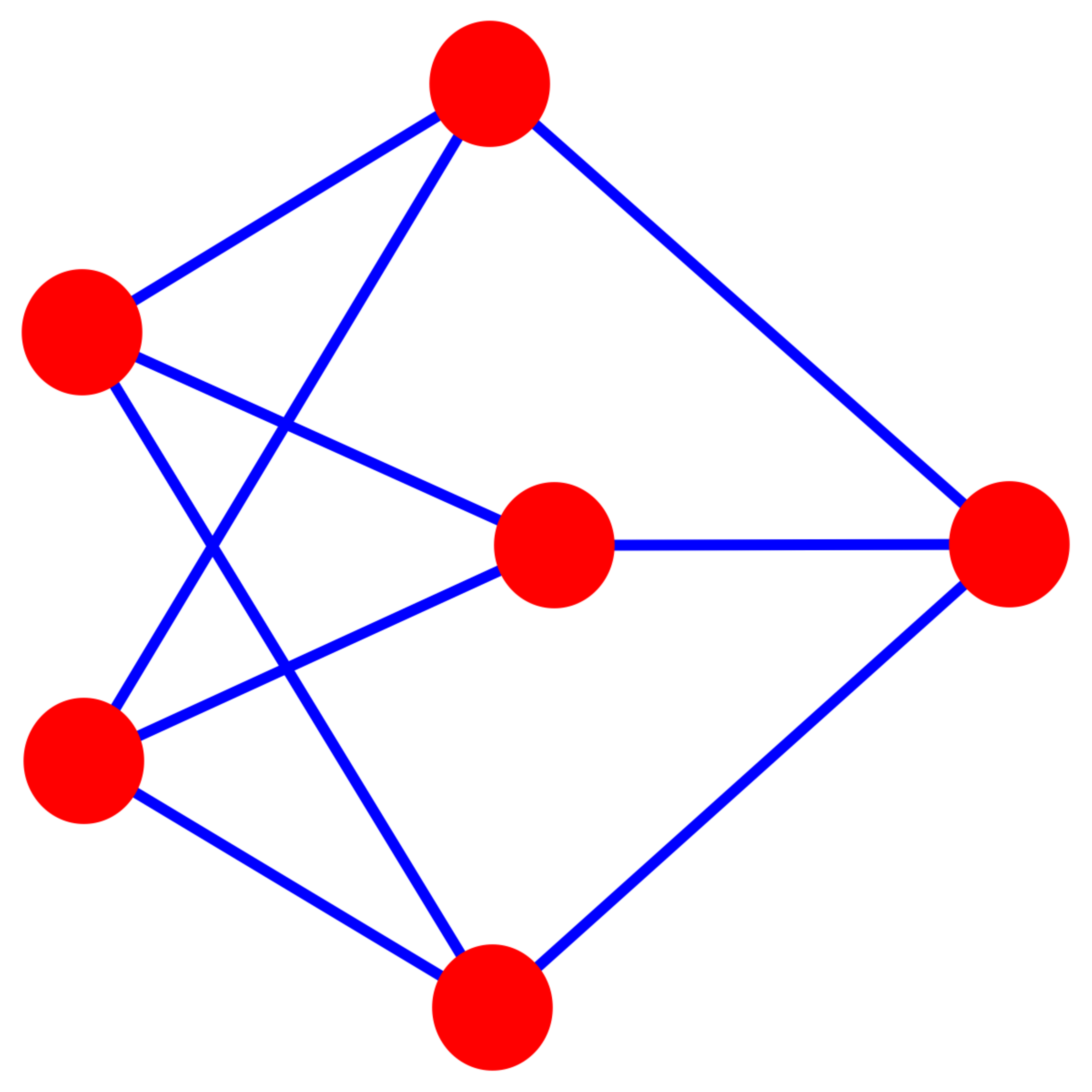}}}
\parbox{3.6cm}{\scalebox{0.061}{\includegraphics{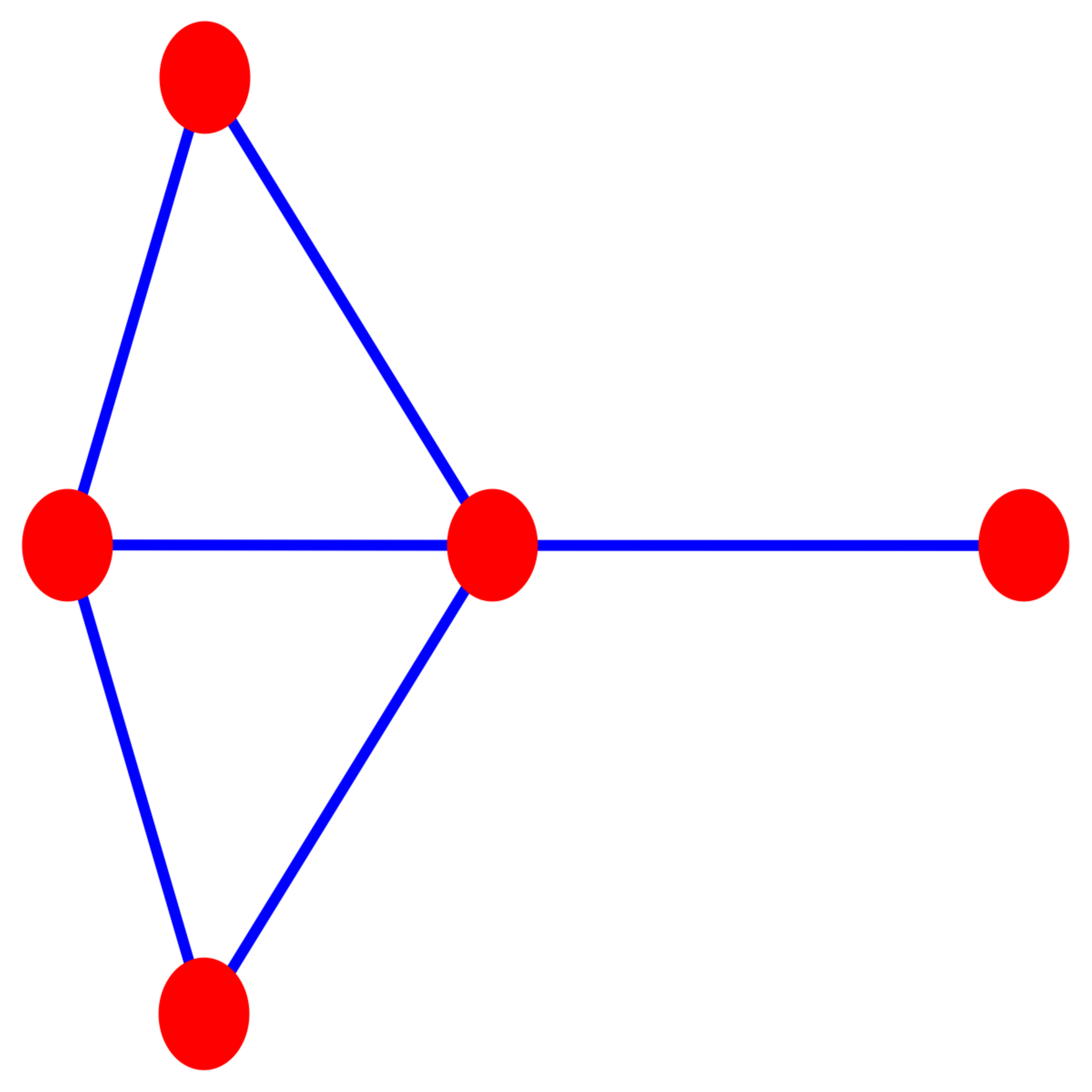}}} }
\parbox{15cm}{
\parbox{3.6cm}{ 17) Kite} \parbox{3.6cm}{ 18) Star} \parbox{3.6cm}{ 19) Utility} \parbox{3.6cm}{ 20) Dart} }
\parbox{15cm}{
\parbox{3.6cm}{\scalebox{0.061}{\includegraphics{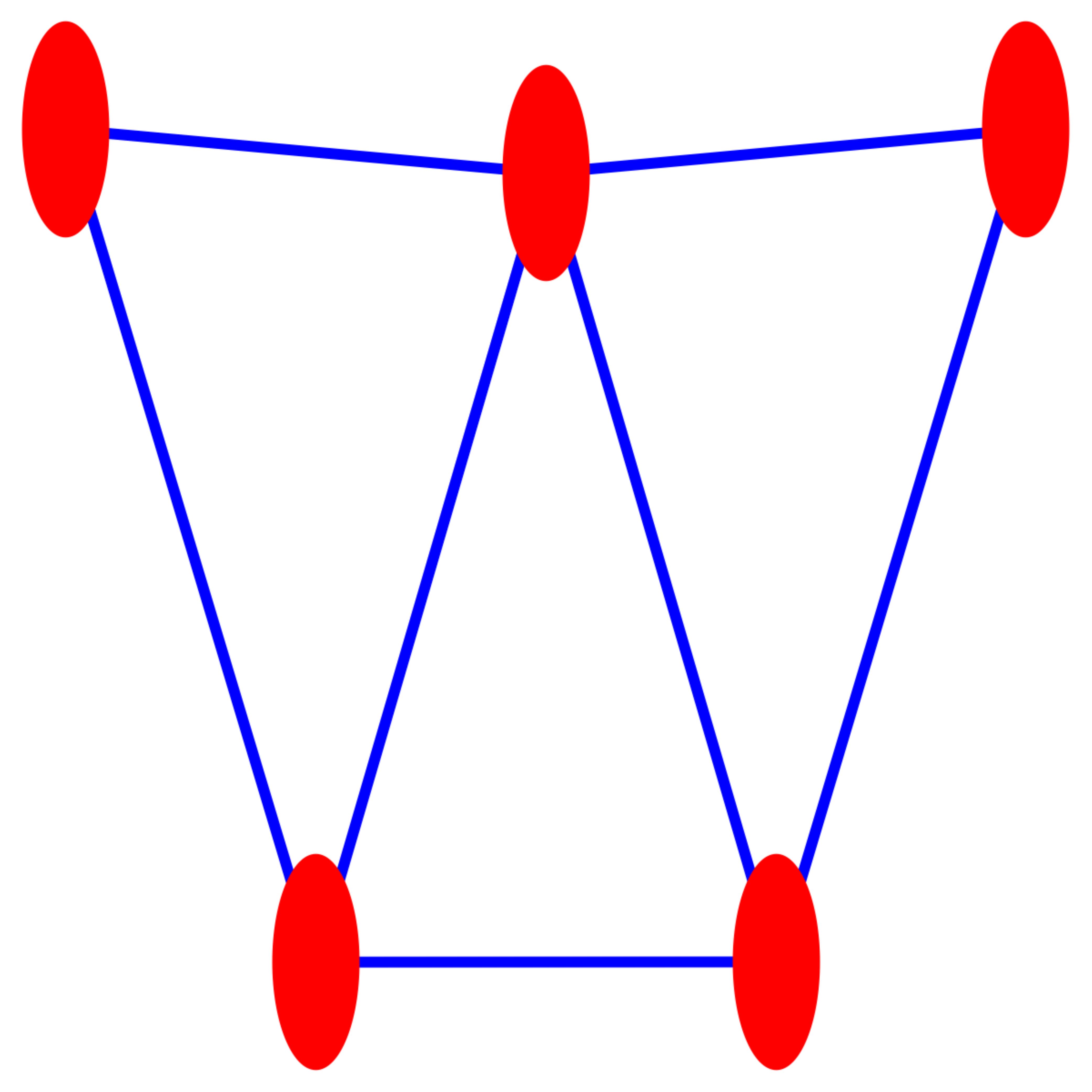}}}
\parbox{3.6cm}{\scalebox{0.061}{\includegraphics{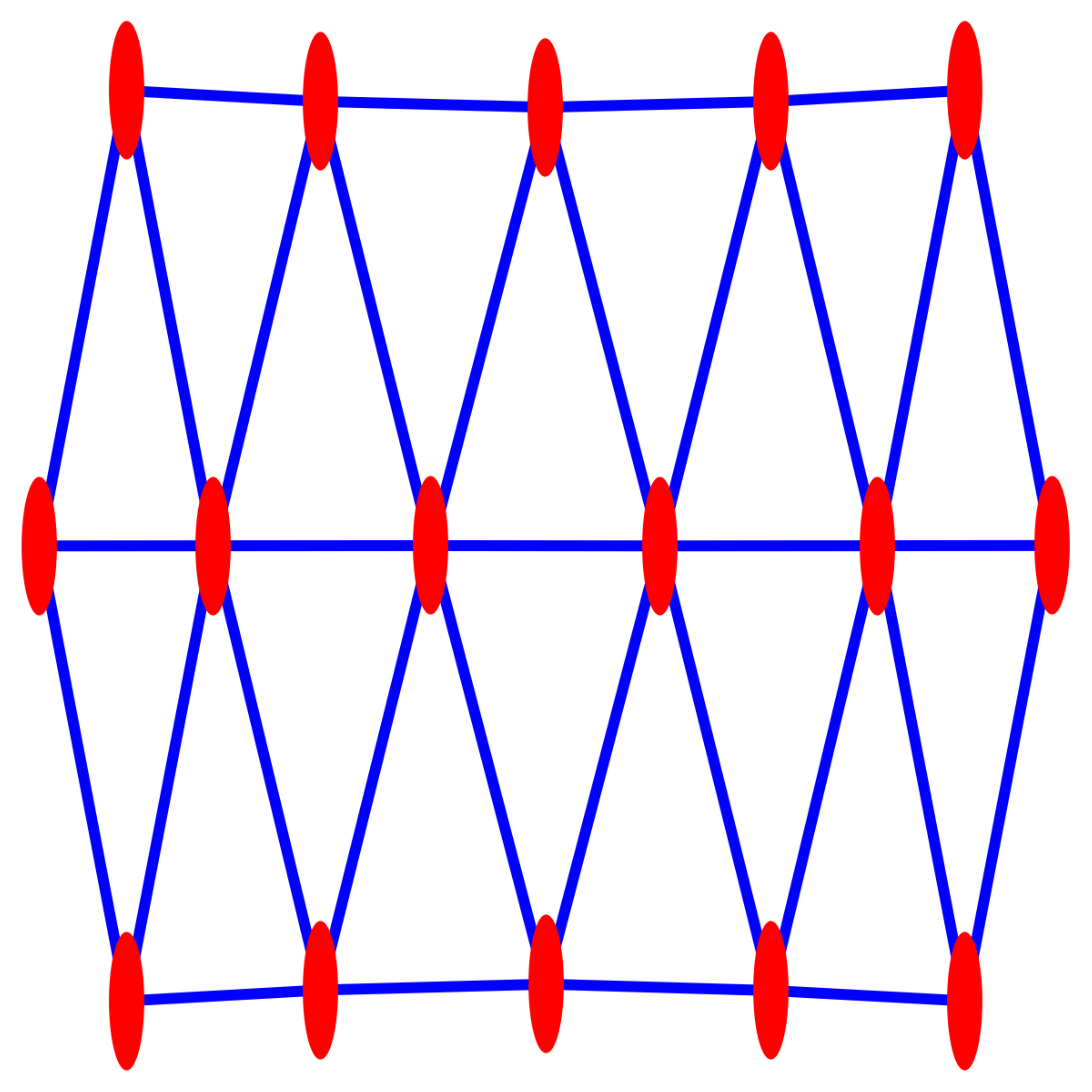}}}
\parbox{3.6cm}{\scalebox{0.061}{\includegraphics{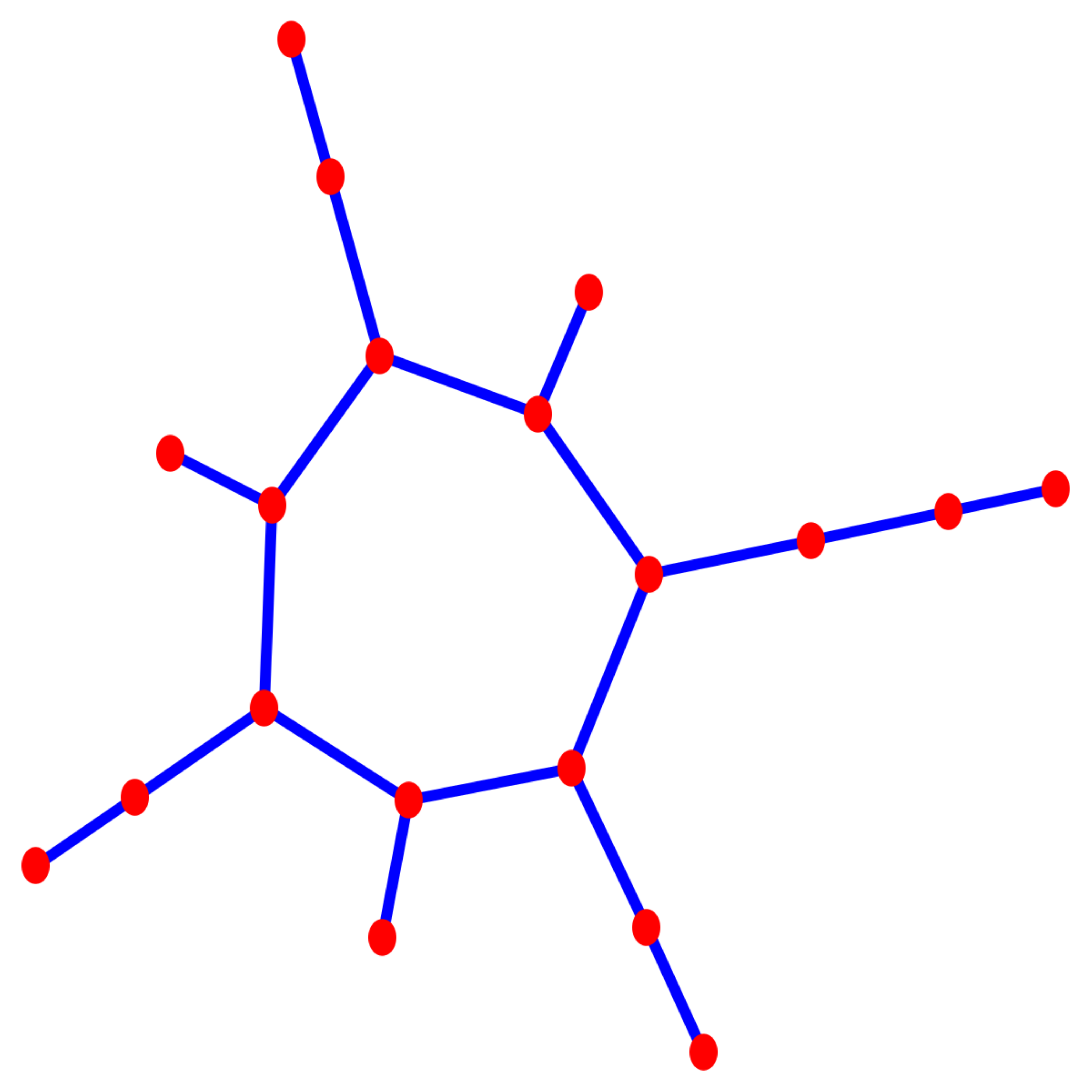}}}
\parbox{3.6cm}{\scalebox{0.061}{\includegraphics{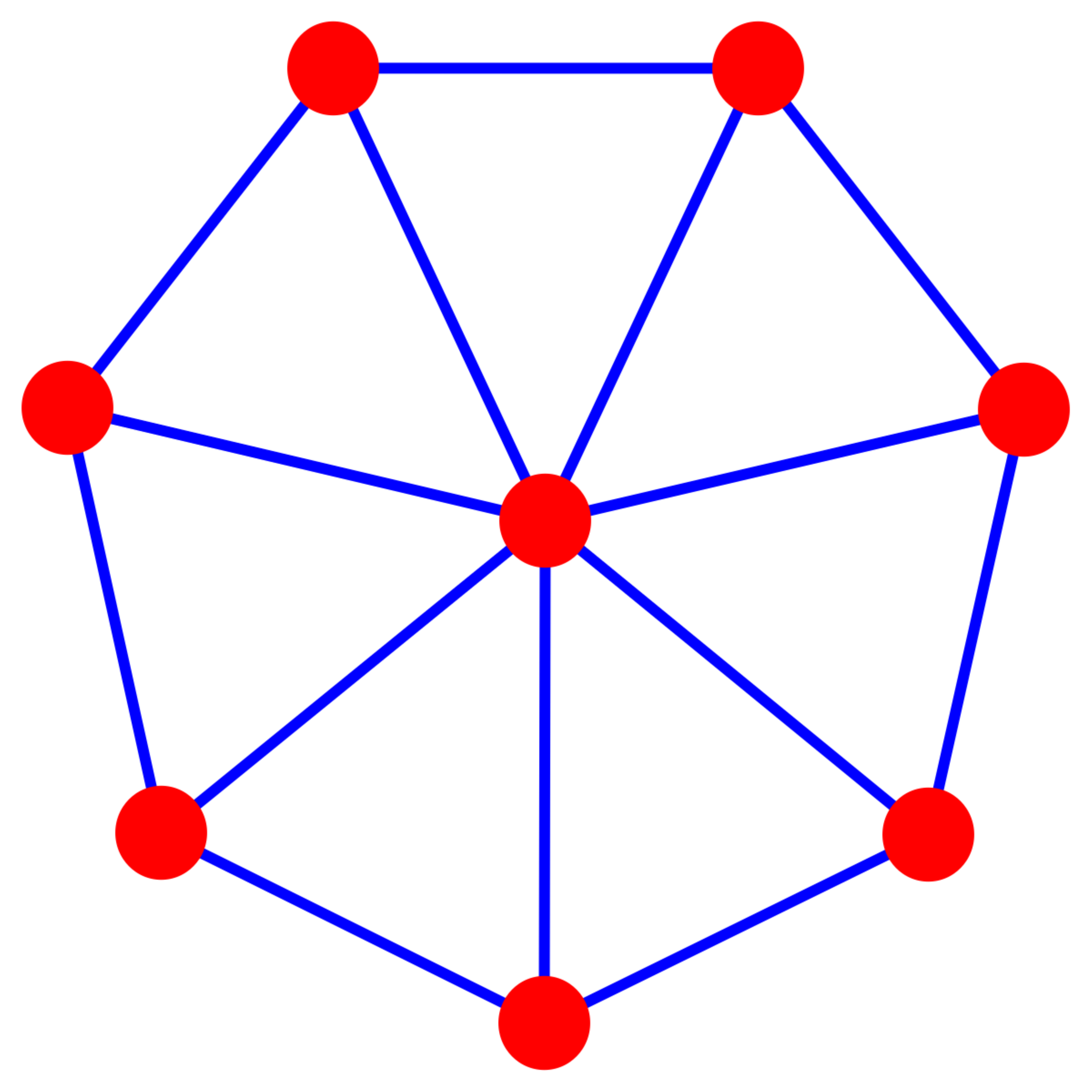}}} }
\parbox{15cm}{
\parbox{3.6cm}{ 21) Gem} \parbox{3.6cm}{ 22) Gate} \parbox{3.6cm}{ 23) Sun} \parbox{3.6cm}{ 24) Wheel} }

\section{Fixed points}

The fixed point theorem for graph endomorphisms \cite{brouwergraph} generalizes
to graph homeomorphisms. 
Any homeomorphism $T:(G,\O) \to (G,\O)$ defines an automorphism of the nerve graph and so 
induces linear maps $T_k$ on the cohomology groups $H^k(G)$ which are finite dimensional 
vector spaces. The {\bf Lefschetz number} is defined as usual as
$L(T) = \sum_{k=0}^{\infty} (-1)^k {\rm tr}(T_k)$.

\begin{thm}
Every graph homeomorphism $T$ with nonzero Lefschetz number $L(T)$ 
has a fixed subgraph consisting 
of a union of elements in $\B$ which all mutually intersect. 
\label{DiscreteKakutani}
\end{thm}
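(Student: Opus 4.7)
The plan is to reduce the statement to the already-known Brouwer-Lefschetz fixed point theorem for graph endomorphisms (cited here as \cite{brouwergraph}), transported along the nerve equivalence between $G$ and its nerve graph $\G$.

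First, I would use that, by the definition of a graph homeomorphism, $T$ is fully encoded by an automorphism $\tilde{T}:\G\to\G$ of the nerve graph: $T$ permutes $\B$ by some bijection $B\mapsto T(B)$, and this permutation respects the edges of $\G$, i.e.\ it preserves the dimension-intersection condition. In particular, for any subset $\mathcal{S}\subset\B$ whose elements are pairwise linked in $\G$, the image $\tilde{T}(\mathcal{S})$ has the same property, and at the level of $G$ one has $T\bigl(\bigcup_{B\in\mathcal{S}} B\bigr)=\bigcup_{B\in\mathcal{S}} T(B)$.

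Next, I would identify the Lefschetz numbers on $G$ and on $\G$. By the graph topology axiom, $G$ and $\G$ are homotopic, so by Theorem~\ref{4} there are canonical isomorphisms $H^k(G)\cong H^k(\G)$ for every $k$. These isomorphisms are natural with respect to the homeomorphism (since the collapse of each $B\in\B$ to its nerve vertex is equivariant under $T$ and $\tilde{T}$), which implies $\mathrm{tr}(T_k)=\mathrm{tr}(\tilde{T}_k)$ for each $k$, and therefore $L(\tilde{T})=L(T)\neq 0$.

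Now I would apply the graph Brouwer-Lefschetz theorem of \cite{brouwergraph} to the graph endomorphism $\tilde{T}$ on the finite simple graph $\G$: since $L(\tilde{T})\neq 0$, there exists a clique $\sigma=\{B_1,\dots,B_r\}$ in $\G$, i.e.\ a complete subgraph, which is invariant as a set under $\tilde{T}$. By the very definition of the edges of $\G$, the basis elements $B_1,\dots,B_r$ mutually satisfy the dimension assumption, hence in particular pairwise intersect in $G$. Setting $S:=\bigcup_{i=1}^r B_i$, the first step shows $T(S)=S$, and $S$ is a subgraph of $G$ of the required form.

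The main obstacle will be the naturality statement in the second paragraph: one must be sure that the identification $H^k(G)\cong H^k(\G)$ obtained from the homotopy between $G$ and $\G$ intertwines $T_k$ with $\tilde{T}_k$. This is a standard verification from the Ivashchenko homotopy machinery, since each elementary collapse of a contractible $B\in\B$ to a point of $\G$ can be arranged equivariantly with respect to the permutation of $\B$ induced by $T$; once this is checked, the result is a direct corollary of the graph-theoretic Lefschetz theorem applied to $\G$.
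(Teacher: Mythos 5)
Your proposal is correct and follows essentially the same route as the paper: pass to the induced automorphism $\tilde{T}$ of the nerve graph $\G$, invoke the Lefschetz fixed point theorem for graph endomorphisms from \cite{brouwergraph} to obtain an invariant simplex (clique) of $\G$, and translate that clique back into a union of mutually intersecting elements of $\B$. The extra naturality check you flag (that the identification $H^k(G)\cong H^k(\G)$ intertwines $T_k$ with $\tilde{T}_k$) is already built into the paper's definition of $L(T)$, which defines the maps $T_k$ via the nerve-graph automorphism in the first place, so the paper's proof is just the one-line reduction you describe.
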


\begin{proof}
By \cite{brouwergraph}, the graph automorphism on the nerve 
graph has a fixed simplex. 
\end{proof}

{\bf Example.} \\
Take a sun graph $C_{(1,2,3,0)}$. Its automorphism group is trivial as 
for most sun graphs. Take the topology given by the graph generated by the 
union of two adjacent rays. The nerve graph is $C_4$. Lets take the reflection
over the diagonal. This induces a reflection on the nerve graph. We have a union
of three base elements which is invariant. This corresponds to an edge which 
stays invariant under a reflection at the nerve graph. \\

{\bf Remarks. } \\
{\bf 1)} It would be nice to have direct proofs of discrete versions of other fixed point 
theorems like the {\bf Poincar\'e-Birkhoff} fixed point theorem \cite{BrNe75}. 
The classical theorem itself implies that there is a discrete version. The point is
to give a purely discrete proof.  Given a topology on an annular graph and assume we have a 
graph homeomorphism $T$ for which any homotopically nontrivial circular chain in $\B$  intersects 
its image and that the boundary components in $\B$ rotate in opposite directions. Then
there is an invariant contractible open set for $T$. \\
{\bf 2)} If the topology is optimal then the invariant set 
established in \ref{DiscreteKakutani} is a union of contractible
sets with contractible essential intersections and therefore contractible. There
is a bound on the diameter then given by $(d+1) M$, where $d$ is the dimension of the 
nerve graph and $M$ is the maximal diameter of sets in $\B$. 

\section{Euclidean space}

{\bf A)} The construction of graph homeomorphism provokes the question whether there is
an analogue constructions in the continuum which is based on homotopy and dimension.
Indeed, a similar notion of dimension allows to characterize the {\bf category of compact manifolds} 
in the more general category of {\bf compact metric spaces}. We will sketch that the notions of 
{\bf "contractibility" and "dimension"} allow to characterize {\bf locally Euclidean spaces}: \\

Lets call a compact metric space $(X,d)$ a {\bf geometric space of dimension $k$},
if there exists $\epsilon>0$ such that for all $0<r<\epsilon$, the unit sphere $S_r(x)$ is a $(k-1)$-dimensional
{\bf Reeb sphere}. To define what a {\rm Reeb sphere} is in the context of metric spaces, 
we need a notion of {\bf Morse function} for metric spaces.
As in the graph case, this notion depends on a predefined notion of {\bf contractibility} in $X$. Lets use 
the standard notion of {\bf contractible = homotopic to a point}. Given a continuous real-valued function $f$ on 
$X$, we call $x$ a {\bf critical point} if there are arbitrary small $r>0$ for which $S_r(x) \cap \{ f(y)=f(x) \; \}$ 
is either empty or not contractible. A continuous real-valued function on $(X,d)$ is called a 
{\bf Morse function} on $X$ if it has only finitely many critical points. A compact metric space is a {\bf Reeb sphere}, 
if the minimal number of critical points among all Morse functions is $2$. A Morse function always has the minimum
$x$ as a critical point. Reeb spheres are spaces for which only one other critical point exists. In the manifold
case, the second critical point is the maximum $y$ with index $1-\chi(X)$ 
so that $\chi(X)=1+(-1)^k$ if $X$ is a $k$-dimensional Reeb sphere. Note however that we have a notion of Euler 
characteristic only a posteriori after showing the metric space is a topological manifold. For the standard
Cantor set for example, the Euler characteristic is not defined and indeed the spheres $S_r(x)$ can fail to be 
Reeb spheres for arbitrary small $r>0$. \\

{\bf Any 1D connected geometric metric space is a classical circle.} \\
Proof: a zero-dimensional Reeb sphere consists of two points. Since $S_r(x)$ consists of two points $a_x(r),b_x(r)$
and because the distance function is continuous, we have two continuous curves $r \to a_x(r)$ and $r \to a_y(r)$ in $X$ 
which cover a neighborhood of $x$. This shows that a small ball in $X$ is an open interval.  
For every $x$ we have such a ball $B(x)$. They form a cover. By compactness of $X$, there is a finite 
subcover. They produce an atlas for a topological $1$-manifold. 
Each connected component is a boundary-free $1$-dimensional connected manifold 
which must be a circle. We have seen that any one-dimensional {\bf connected} one-dimensional geometric metric space is 
a circle.  \\

{\bf Any 2D geometric metric space is a topological two manifold.} \\
Proof. We have just established that every small enough
sphere $S_r(x)$ is a one-dimensional Reeb sphere and so a connected one-dimensional circle. 
We have now a polar coordinate description of a neighborhood of a point. This open cover $\{ B(x) \; | \; x \in X \}$
produces an atlas for the topological manifold, a compact metric space which is locally Euclidean. The coordinate changes
on the intersection is continuous
The classical Reeb theorem \cite{Mil63} (which requires to check the conditions
for smooth functions only) 
assures that two-dimensional geometric spheres are classical topological spheres.  \\

We can now continue like this and see that $3$-dimensional geometric spaces are topological three manifolds. 
Again, by invoking the classical Reeb theorem, we see that three-dimensional geometric spheres must be topological three spheres.
We inductively assure that a $d$-dimensional geometric space with integer $d$
must be a compact topological manifold of dimension $d$ and that a $d$-dimensional Reeb sphere is homeomorphic to 
a classical $d$-dimensional sphere. \\

{\bf B)} Lets see how the fixed point theorem implies a variant of the classical Lefschetz-Brouwer fixed point theorem
in a more general setting. 
Let $(X,d)$ be a compact metric space and let $T:X \to X$ be a homeomorphism. Let $\B$ be a finite cover
on $X$ of maximal diameter $\epsilon$. It generates a finite topology $\O$ ion $X$ which defines a graph for
which we can look at the cohomology. 
We assume that the cover is {\bf good} in the sense that the cohomology is finite and does not change for 
$\epsilon \to 0$. Since the new cover $T(\B)$ generates a different topology, $T$ does not produce a homeomorphism
of $\O$ yet, but we can take the permutation of $\O$ which is closest in the supremum topology. 
This modification has a fixed contractible set by the fixed point theorem for graphs \cite{brouwergraph}.
If $X$ is finite-dimensional, then there is a bound on the diameter $M(d+1) \epsilon$ of this fixed contractible set. 
For every $\epsilon=1/n$, there is a point $x_n$ which satisfies $d(T(x_n),x_n) \leq C\epsilon$. An accumulation point
of $x_n$ as $n \to \infty$ is a fixed point of $T$. We have sketched:  \\

Assume $(X,d)$ is a metric space and $T$ is a homeomorphism. Assume that there are arbitrarily fine
covers $\B_n$ for which the \v{C}ech cohomology $H^k(X)$ of the generated topology $\O_n$ 
is the same and $H^k(X)=0$ for large enough $k$. Assume further that
sufficiently fine approximations $T_n$ of $T$ in the uniform topology produce graph automorphisms 
for which the Lefshetz number is constant and nonzero for every fine enough cover $\B_n$. 
Then $T$ has a fixed point. \\

{\bf C)} A {\bf good triangularization} of a topological manifold 
$M$ is a geometric graph $G$ of dimension $d$ such that the topological
manifold $N$ constructed from $G$ obtained by filling any discrete unit ball with an Euclidean unit ball and 
introducing obvious coordinate transition maps is homeomorphic to $M$. 
Any good triangularization defines now a \v{C}ech cover $\U$ of the manifold. 
We expect that any two good triangularizations are homotopic and that there is a third, possibly 
finer triangularization which is homeomorphic to both. 

\begin{figure}[H]
\scalebox{0.15}{\includegraphics{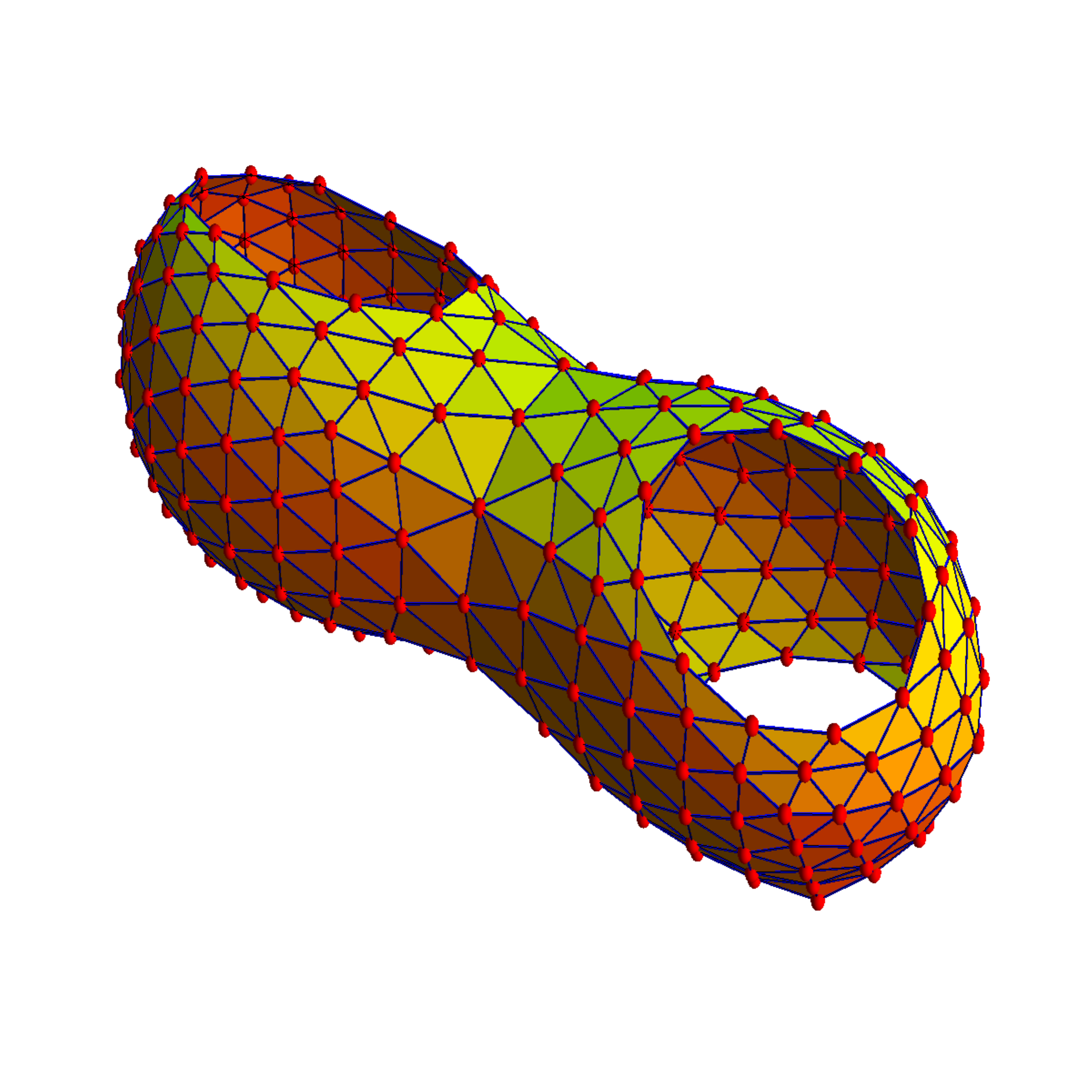}}
\caption{
An example of a two-dimensional geometric graph $G$ in which every unit sphere is either a cycle
graph (interior points) or an interval graph (boundary points). This graph has uniform
dimension $2$ and Euler characteristic $2-b=-1$, where $b=3$ is the number 
of boundary components. It is a discretization of a two-dimensional manifold $M$ with boundary. 
An optimal topology for $G$ comes from a \v{C}ech cover $\B$ of that manifold. In the graph
case, $\B$ could be built with geodesic balls of radius $2$. Once a good cover is found, we
could refine the graph using {\bf barycentric refinements} in which triangle and edges are split.
Such refinements are examples of 2-homeomorphism and of course a homeomorphism in our sense. 
\label{geometric}
}
\end{figure}

\section{Remarks}

{\bf 1)} Two topological graphs $H,G=\phi(H)$ which are isomorphic as graphs are homeomorphic as topological 
graphs with suitable topologies: because any finite simple graph carries the star topology generated by 
stars which are one-dimensional maximal trees in the unit ball.
Choose such a topology $\O$ on $H$ generated by a subbasis $\A$. Then $\B = \{ \phi(A) \; | \; A \in \A \; \}$ 
generates a topology on $G$. 
Two contractible graphs of the same uniform dimension are homeomorphic using the indiscrete topology.
For example, any wheel graph is homeomorphic to a triangle. 
For non-contractible graphs, the indiscrete topology of course is forbidden. An icosahedron
for example needs a topology with $6$ elements which makes it homeomorphic to the octahedron. We can take
unit neighborhood of antipodal triangles, two standard unit balls and two unit balls with an additional 
triangle. \\ 

{\bf 2)} Since $\O$ is a standard topology, we can look at
at product, quotient and relative topologies. But one has to be careful: \\
{\bf a)} Given two topological graphs 
$(G_1,\O_1),(G_2,\O_2)$, the {\bf product topology} $\O_1 \times \O_2$ is a topology on the Cartesian 
product $G_1 \times G_2$ of the graphs,
but we have to make sure that the product topology is also a graph topology and therefore provide a 
subbasis with the right properties. The subbases $\B_1 \times \B_2$ consisting of all product graphs $B_1 \times B_2$
with $B_i \in \B_i$ often works. The product $C_4 \times K_2$ for example is the cube graph. If $\B_1$ is the fine
topology on $C_4$ with $4$ interval graphs and $\B_2 = \{ (1,2) \}$ is the indiscrete topology, then 
$\B=\B_1 \times \B_2$ consists of $4$ sets.  It does not produce a topology however since
$\chi(G_1 \times G_2) = 8-12=-4$ but the nerve graph is isomorphic to $C_4$ and has $\chi(\G)=0$.  \\
{\bf b)} Now lets look the {\bf quotient topology} given by an equivalence relation $\sim$ on $G$ for which 
$H=G/\sim$ is a graph, then $\O/\sim$ defines a topology on $H$ in the set theoretical sense but not 
necessary a graph topology. We assume that $G/\sim$ is a finite simple graph removing possible 
loops or multiple connections from the identification. \\
{\bf c)} Finally, lets look at the {\bf induced topology}. If $H=(V,E)$ is a subgraph of $G$, then
the topology $\H = \{ A \cap V \; | \; A \in \O \; \}$ is in general not a topology which makes $H$ a topological
graph. This can not be avoided: lets take the wheel graph $G=W_4$ which has the embedded circle $H=C_4$. The graph $G$ is
a topological graph with the indiscrete topology. The induced topology on $H$ is the indiscrete topology on the circle
which makes sense as a topology but it is not a graph topology on the circle because $C_4$ is not contractible. \\

{\bf 3)} We have seen that the classical notion of graph homeomorphism \cite{TuckerGross} is
equivalent to the just defined notion if the graphs have no triangles.
The classical $1$-homeomorphisms do not preserve Euler characteristic, nor dimension in general.
A triangle for example has dimension $2$ and Euler characteristic $1$. It is $1$-homeomorphic to 
$C_6$ which has Euler characteristic $0$ and dimension $1$. 
Still, the notion of $1$-homeomorphism is an important concept.
It is in particular essential for {\bf Kuratowski's theorem}e. There are higher dimensional versions
of {\bf $k$-homeomorphisms} which allow for barycentric refinements of maximally $k$-dimensional 
subsimplices. For graphs containing no $K_4$ graphs, such $2$-homemorphisms would be a special case of a 
homeomorphisms in the sense given here.  \\

{\bf 4)} Downplaying membership and focussing on functions is a point of view
\cite{Lawvere} which appears natural when studying the topology of graphs. Since graphs are finite objects,
the corresponding Boolean algebras are naturally complete {\bf Heyting algebras}.
Topological graphs are {\bf locales} which illustrate that a ``pointless topology point of view" 
\cite{Johnstone} can be useful even in finite discrete situations.
The notion of homeomorphism given here is simpler than any use of multi-valued maps, with which we 
have experimented before. Our motivation has been to generalize the Bouwer fixed point theorem \cite{brouwergraph}
to set-valued maps. {\bf Kakutani's fixed point theorem} deals with set-valued maps. The pointless 
topology approach makes the classical Kakutani theorem appear more natural since it allows to avoid 
set-valued functions $T$. We only need that $T(x)$ is contractible for all $x$. 
This assumption for the Kakutani theorem implies that 
contractible sets are mapped into contractible sets. So, if we go the pointless path in the Euclidean 
space and define the topology on a convex, compact subset $X$ of ${\bf R}^n$ 
generated by the set $\B$ of all open contractible sets and $T$ to be an isomorphism of the corresponding 
Heyting algebra  (without specifying points, which is the point of point-less topology), then $T$ has 
a fixed contractible element in the Heyting algebra of arbitrary small diameter. This by compactness implies
that $T$ has a fixed point. We just have sketched a proof that the classical Kakutani theorem follows from 
the corresponding fixed point theorem for point-less topology. We could also reduce to 
the graph case when invoking nonstandard analysis \cite{Nelson77} because in nonstandard analysis, compact 
sets are finite sets and $d$-dimensional geometric graphs and $d$-dimensional manifolds are very similar 
from a topological point of view, especially for fixed point theorems. Again, also in nonstandard analysis,
the pointless topology approach is very natural. For compact manifolds, there is more than the finite set:
we can not just look at the graph of all pairs $(x,y)$ for which $|x-y|$ is infinitesimal. We need to capture
the nature of a $d$-dimensional compact set by looking at a finite nonstandard cover $\B$ for which the nerve
graph is a nonstandard geometric graph. \\

{\bf 5)} The usual notion of {\bf connectedness} is not based on point set topology in graph theory. The reason
is that a graph has a natural metric, the {\bf geodesic distance} which defines a topology on the vertex
set. Note however that technically, any graph is completely disconnected with respect to this metric because 
$B_{1/2}(x) = \{ x \; \}$ so that every subset of a vertex set is both open and closed. In other words,
with respect to the geodesic topology, the graph is {\bf completely disconnected}. Also, whenever we have a finite
point set topology $\O$ on a graph and two sets $U,V \in \O$ whose union is the entire set,
we already have disconnectedness This is often not acceptable. What we understand under
connectedness in graph theory is {\bf path connectedness}, a notion in which edges are part of the picture.
But as the subbasis $\B = \G_1$ shows, one can not use the classical notion of connectedness for
the topology $\O$ generated by $\B$. The notion of connectedness introduced here for topological 
graphs is also natural for general topology: given a topological space $(X,\O)$ with a subbasis $\B$ 
for which every finite intersection of elements in $\B$ is 
connected, then classical connectedness is equivalent to the fact that $\B$ can not be written as a union $\B_1 \cup \B_2$ 
for which any $B_1 \in \B_1$ and $B_2 \in \B_2$ has an empty intersection. Proof. If $X$ is disconnected, 
then $X = U_1 \cup U_2$ with two disjoint nonempty open sets $U_i$. The set of subsets $\B_i = U_i \cap \B$ 
partition $\B$ because each $U_i \cap B$ is either empty or $B$ 
(otherwise $(U_1 \cap B) \cup (U_2 \cap B) = B$
shows that $B$ is not connected). On the other hand, if we can split a subbasis $\B$ of $X$ into two 
sets $\B_1,\B_2$ then $U_i=\bigcup_{B \in \B_i} B$ are open sets which unite to $X$ and which are both 
not empty, showing that $X$ is not connected. \\
(The example of $X=[0,1] \cup [2,3]$ with topology generated by the subbasis 
$\B = \{ [0,p/q] \cap X \; | \; p/q \in Q \}$ shows that connectedness of elements in $\B$ is needed
because $\B$ can not be written as a union of two disjoint sets. 
The example of the Cantor set shows that not every topological space has a subbasis consisting of 
connected sets.) 
We don't have to stress how important the notion of connectedness is in topology: to cite 
\cite{Sato}: {\it "In topology we investigate one aspect of geometrical objects almost exclusively of the others: 
that is whether a given geometrical object is connected or not connected. We classify objects according
to the nature of their connectedness. One focuses on the connectivity, ignoring changes caused by 
stretching or shrinking."} \\

{\bf 6)} Graph homeomorphisms do preserve the dimension ${\rm dim}_G(A)$ for $A \in \B$.
We did not ask that the dimension of all $A \in \O$ remain the same even so this is often the case.
Requiring the dimension to be constant for all elements in $\O$ would be a natural assumption but we do 
not make it to have more flexibility and simplicity. Checking the dimension assumption for a couple of 
elements $\B$ is simpler. For the entire topology, it could be a tough task to establish and produce too
much constraints. Note that in general, the topology $\O$ is the discrete topology. 
Classically, the inductive topological dimension is preserved by homeomorphisms but it is important to
note that inductive dimension is classically a global notion, a number attached to the topological space itself. 
For example: take a classical Euclidean disc and attach one-dimensional hairs. This 
space is two-dimensional classically using the inductive dimension of Menger and Urysohn \cite{HurewiczWallman}.
It contains however open subsets like neighborhoods of points on the hair which are one-dimensional. 
Since we have a local dimension in the discrete, we could define for a metric space $(X,d)$ a 
{\bf local topological dimension} at a point as the limit of 
${\rm dim}(X \cap B_r(x))$ with $r \to 0$ if the limit exists. In the discrete we do not have to worry
about such things and have a local notion of dimension which works. \\

{\bf 7)} Lets look at some classical notions of dimension and see what they mean in the case of graphs: 
To cite \cite{Crilly}: {\it "The concept of dimension, deriving from our understanding of the dimensions of
physical space, is one of the most interesting from a mathematical point of view".} \\
{\bf a)} The classical {\bf Hausdorff dimension} for metric spaces is not topological. It only is invariant 
under homeomorphisms satisfying a Lipshitz condition. 
It can change under homeomorphisms as any two Cantor sets are homeomorphic, but the dimension of a Cantor set
can be pretty arbitrary. If we apply the notion of Hausdorff dimension to graphs verbatim, then
the dimension is zero at every point. It appears not interesting for graphs. The dimension $\dim(G)$ is a
good replacement which shares with the Hausdorff dimension the property that it is not a topological invariant. 
We still need to explore for which compact metric spaces we can find graph approximations such that the 
dimension converges. For the standard Cantor set $X \subset [0,1]$ for example we would have to approximate
the set with graphs which are partly one and partly zero-dimensional. \\
{\bf b)} Classically, the {\bf Lebesgue covering dimension} of a topological space is the
minimal $n$ such that every finite open cover contains a
subcover in which no point is included more than $n+1$ times. It is a topological invariant. 
In other words, the nerve graph of the subcover has degree smaller or equal to $n+1$.
We can define the {\bf Lebesgue covering dimension} of a graph as the maximal dimension 
of a point in the nerve graph of a topology. For the discrete topology generated from $\B=\G_1$, 
the Lebesgue covering dimension is the maximal degree. For a triangularization of a $k$-dimensional
manifold, the Lebesgue covering dimension is $k$. \\
{\bf c)} Classically, the {\bf inductive dimension} is the smallest $n$ such that
every open set $U$ has an open $V$ for which its closure $\overline{V}$ 
is in $U$ has a boundary with inductive dimension $\leq n-1$.
Using the unit sphere, we get a useful notion however as we have seen. \\
{\bf d)} In algebraic topology (i.e. \cite{Hatcher}), a graph is considered a 
{\bf one-dimensional cell complex}. 
Graphs therefore are often considered one-dimensional or treated as discrete analogues of algebraic curves. 
Graphs however naturally have a {\bf CW complex structure} by looking at the complete 
subgraphs in $G$. This is the point of view taken here. \\

{\bf 8)} Classically, two topological spaces $X,Y$ are homotopic, if there are continuous maps $f:X \to Y$
and $g: Y \to X$ for which $S=g \circ f: X \to X$ and $T=f \circ g: Y \to Y$ allow
for continuous maps $F: X \times [0,1] \to X$ and $G:Y \times [0,1]$ 
satisfying  $F(x,0)=S(x), F(x,1)=x$ for all $x \in X$ and $G(y,0) = T(y), G(y,1)=y$ for all $y \in Y$. 
Since graph topology depends heavily on discrete homotopy, it is natural to ask whether one can 
reformulate discrete homotopy to match the classical notion. This is
indeed possible if the homotopy step $X \to Y$ has the property that the inclusion $X \to Y$
and projection $Y \to X$ are continuous. To show this lets focus on a single homotopy step $X \to Y$, 
where $Y$ is a new graph obtained from $X$ by a pyramid construction adding a point $z$. We first extend the topology 
on $X$ to a topology on $Y$, where every $U$ containing an edge in $S(z)$ will be given $z$ 
and define $f: X \to Y$ as the inclusion map and $g: Y \to X$ as a projection map which maps the 
new point $z$ to any of its neighbors, lets call this neighbor $z_0$. 
Now $S: g \circ f: X \to X$ and $T: f \circ g: Y \to Y$ are both continuous: actually, $g \circ f$ is
already the identity map so that it is trivially homotopic to the identity. And $T(x)=x$ for all $x \neq z$
and $T(z)=z_0$. To show that $T=f \circ g$ is homotopic
to the identity map on $Y$, we have to find a continuous map $G$ from the product graph $Y \times K_2$ to $Y$
so that $G(y,0) = T(y)$ and $G(y,1) = y$. These requirements actually define $G$ on the product graph which 
can be visualized as two copies of $Y$. (We ignore here the fact that the product topology is not a graph
topology in general). We only need to verify that $G$ is continuous: as usual in topology 
it is only necessary to show for a subbasis $\B$ of $Y$ that for every $U \in \B$, the set $G^{-1}(U)$
is open. But this is obvious because $G^{-1}(U) = U \times \{0,1 \}$. \\

\begin{figure}[H]
\parbox{13cm}{
\parbox{4cm}{\scalebox{0.1}{\includegraphics{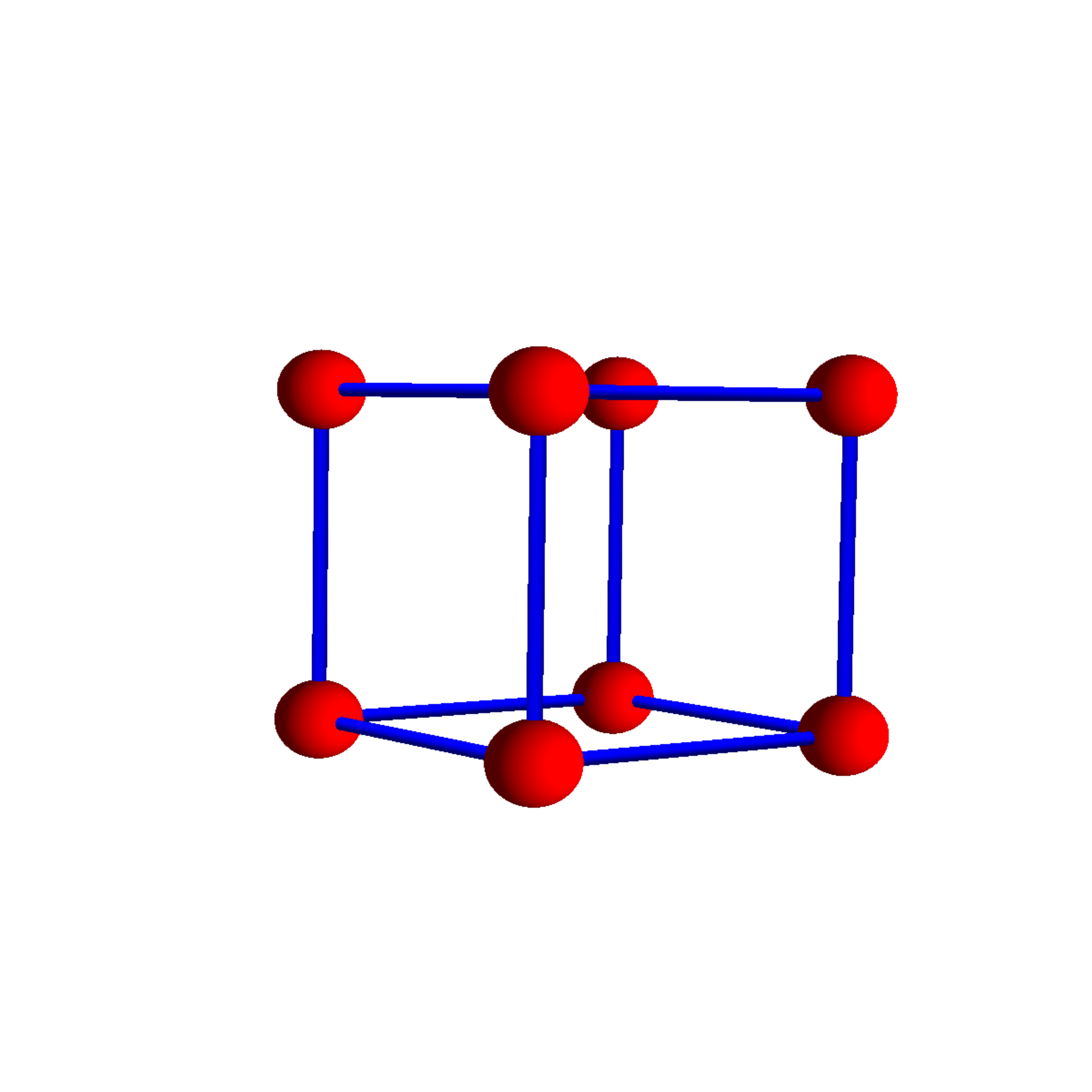}}}
\parbox{4cm}{ $F: X \times K_2 \to X$  }
\parbox{4cm}{\scalebox{0.1}{\includegraphics{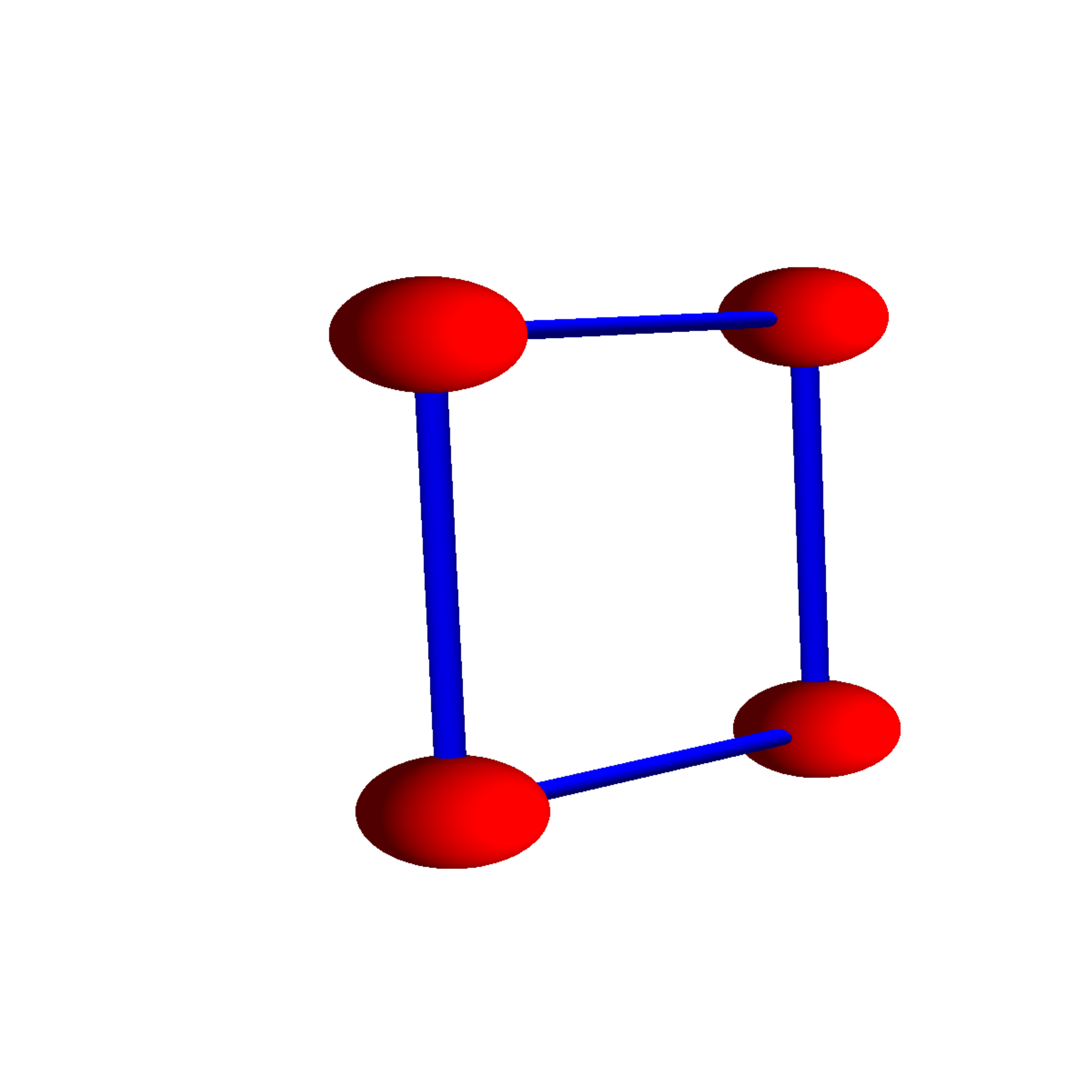}}}
}
\parbox{13cm}{
\parbox{4cm}{\scalebox{0.1}{\includegraphics{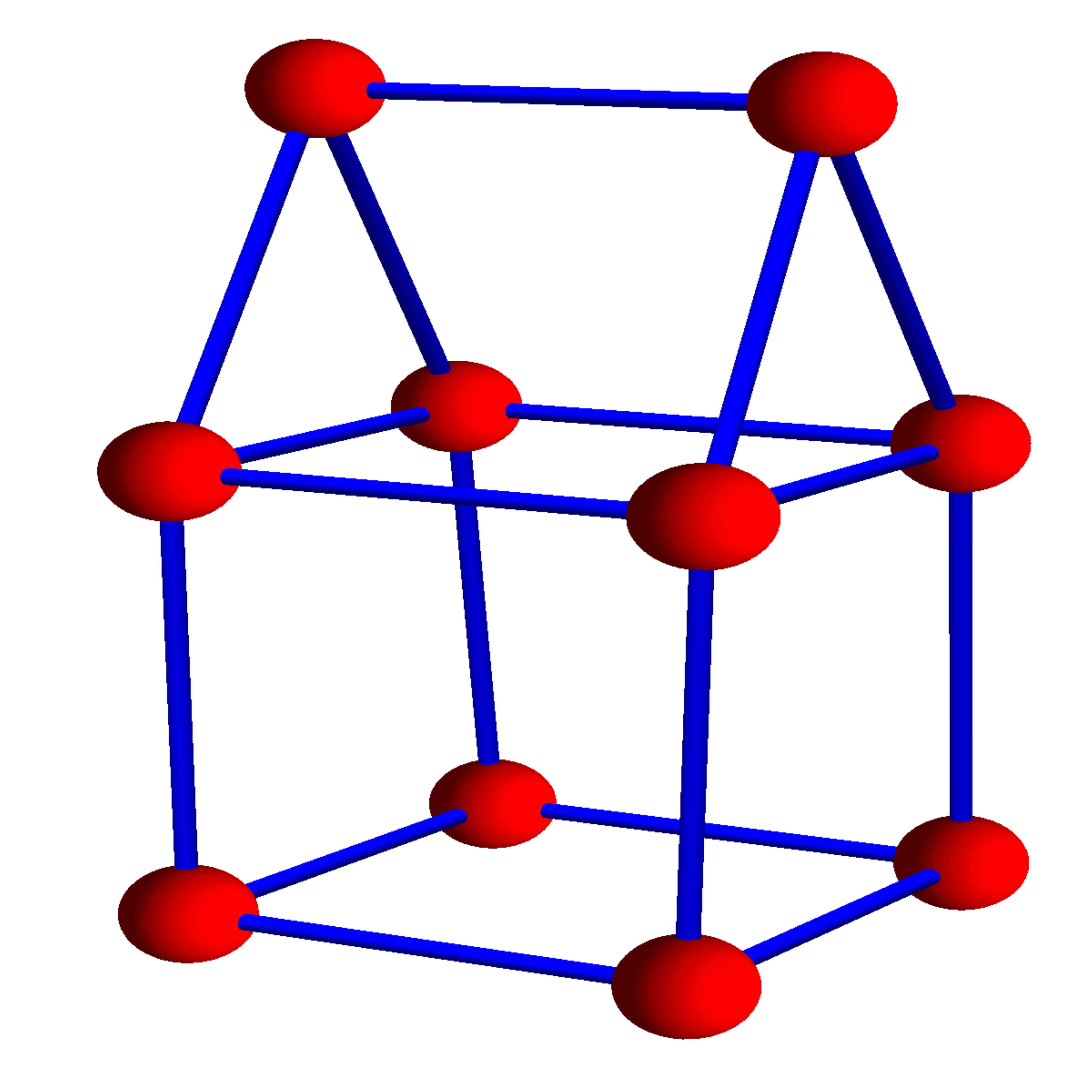}}}
\parbox{4cm}{ $G: Y \times K_2 \to Y$  }
\parbox{4cm}{\scalebox{0.1}{\includegraphics{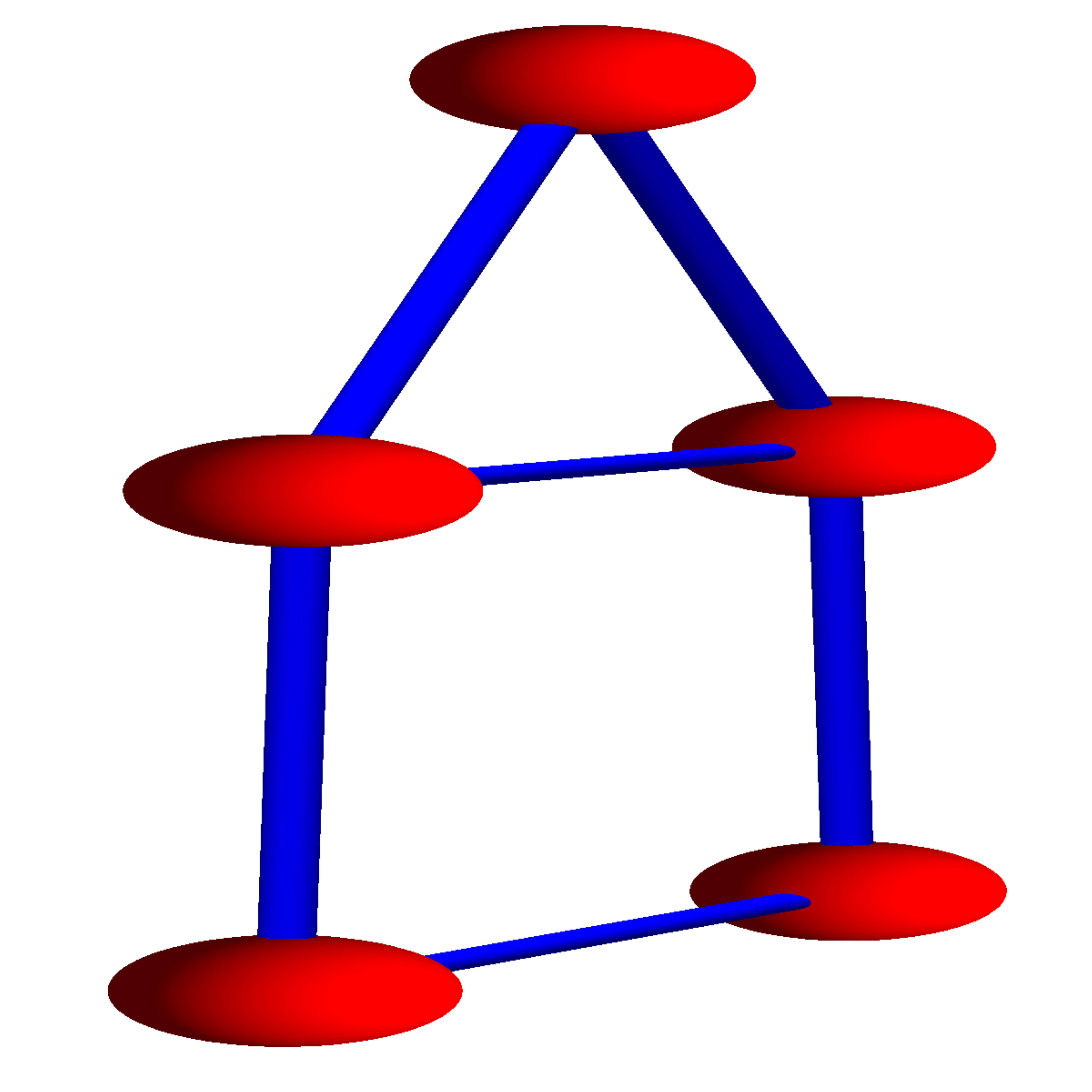}}}
}
\caption{
$Y$ is a homotopy extension of $X$ over a set $U \subset B$ with $B \in \B$
where a new point $z$ has been added. The classical notion of homotopy applies:
there are continuous maps $f:X \to Y, g:Y \to X$
and continuous maps $F:X \times K_2 \to X, G:Y \times K_2 \to Y$
so that $F(x,0)=g(f(x)), F(x,1)=x, G(x,0)=f(g(x)), G(y,1)=y$.
Since $g f(x)=x$ anyway, the first part is trivial. The existence of $G$
follows from the choice of the topology: if we think about $X$ as a subgraph
of $Y$, then $g(f(x)) =x$ for $x \in X$ and $g(f(z)) = z_0$ with $z_0 \in B$. 
Since $Y \times K_2$ carries the product topology also $G$ is continuous.
\label{homotopy}
}
\end{figure}

{\bf 9)} As in the continuum and its adaptations to the discrete \cite{josellisknill} we have to stress that 
contractibility in itself is different from contractibility within an other graph. The boundary circle $C_4$ 
in the wheel graph $W_4$  is not contractible in itself but contractible within $W_4$ because $W_4$ is contractible. 
In the present article, we only deal with contractibility in itself. The distinction is important in 
{\bf Ljusternik-Schnirelmann category}, where the {\bf geometric category} is the minimal number of contractible subgraphs covering $G$. 
The smallest number of in $G$ contractible subgraphs which cover $G$ is called the topological category. 
Both the geometric as well as the topological category are not yet homotopy invariants but we have shown 
the inequality $\tcat(G) \leq \crit(G)$, where $\crit(G)$ is the minimal number of critical points, an injective
function can have on $G$. The number $\cat(G)$ which is the smallest topological category of any graph $H$ 
homotopic to $G$ is a homotopy invariant and $\cat(G) \leq \crit(G)$ is the {\bf discrete Ljusternik-Schnirelmann theorem}
\cite{josellisknill}. \\

{\bf 10)} One can ask why the dimension needs to be invoked at all in the definition of graph topology and homeomorphism. 
Yes, one could look at a subbasis $\B$ consisting of contractible sets and define the nerve graph as all the 
pairs $(A,B)$ for which the intersection is contractible and not empty. A homeomorphsism would be just a bijection 
between topologies. We would still also require the nerve graph to be homeomorphic to $G$. Such a {\bf dimension-agnostic}
setup has serious flaws however. It would make most contractable graphs homeomorphic. Why should a triangularization
of a three-dimensional ball be homeomorphic to a triangularization of a two-dimensional disc? Its not so much the space 
itself which has different topological features but the {\bf boundary}, the set of points for which the unit sphere is 
contractible.  For a disc, the sphere is a circular graph which has a nontrivial homotopy group. For a three-dimensional ball, 
the sphere is a graph which is simply connected. These notions are heavily topological and show that dimension 
must play an important role. It is not only essential for connectivity or simple connectivity, it is also important
for cohomology: if we drill a hole in the middle of a two-dimensional disc, or drill a hold into a three
dimensional disc produces very different topological spaces which any reasonable notion of topology should
honor.  \\

{\bf 11)} Lets look at possible modifications of the definitions and see why we did not do them: 
{\bf a)} the restriction to have finite graphs is not really necessary. Non-compact topological 
spaces can be
modeled with a similar setup. The basis $\B$ is just no more finite then. The notion of topology and 
homeomorphism goes over verbatim. An example is the hexagonal tiling of the
plane which has a natural topology generated by the unit balls $\B$ which are all wheel graphs and two-dimensional. 
This topology is natural also because the curvature is zero everywhere.
{\bf b)} We could ask that open sets in $\B$ either intersect in a contractible set satisfying the 
dimension assumption or then not intersect. We do not see a reason why we should include the second
requirement. It complicates the definition and is not essential. For smaller graphs it would produce
unnecessary constraints. In Figure~(\ref{poster}) we see for example
that some one-dimensional sets in $\B$ intersect without being connected
in the nerve graph. This happens in the two-dimensional components which are
triangles. \\
{\bf c)} We could be more stingy and ask that the intersection of two elements $A,B \in \B$ is 
contractible, independent of the dimension assumption. It is not a good idea because it would
produce exceptions. The set of unit balls on $C_4$ for example, which coincides with the 
discrete topology on $C_4$ would not be a valid topology because the intersection of two
antipodal balls is not contractible. Also the octahedron would not have a natural topology, 
not even the discrete topology. These are not small exceptions: any graph with girth 4 would
have no discrete topology. \\

{\bf 12)} 
When looking at the dynamics of a homeomorphism on a graph, a single topology $\O$ appears 
too limited.  When iterating a homeomorphism, one has to look at a sequence of topologies $\O_n$ which are
equivalent but in which more and more open sets are added, as time moves on.
Similarly as in probability theory, where {\bf martingales} capture stochastic processes $X_k$ by a 
filtration of $\sigma$-algebras $\A_k$ adapted to the random variables in such a
way that $\A_{k+1}$ is generated by $\A_k$ and a random variable $X_k$ which
is independent of $\A_k$, we have to look at a filtration of topologies $\O_k$ 
with subbasis $\B_k$ of $\O_k$ and bijections $\phi_k: \B_k \to \C_k \subset \B_{k+1}$ 
such that $\B_k$ generates $\O_k$ and $\C_k$ generates $\B_{k+1}$. A sequence of 
topologies with subbasis $\B_k$ generating $\B_{k+1}$ and a 
sequence of inclusions $\phi_k$ and an orbit of the homeomorphism is a sequence
of sets $Y_i$, where $Y_{i+1}$ is an atom in $\phi_i V_i$. The dynamics allows to talk about
points $(x_0,x_1,x_2,\dots)$. Similarly, as the orbit of the map $T(x)=2x$ determines the
binary expansion of $x$ and so a filtration of topological spaces,
the graph homeomorphism now defines a filtration of topological space. \\

{\bf 13)} One has looked at classical topological on 
graphs before like in \cite{BFV}. There is almost no overlap. The work in 
\cite{BFV} look at classical topologies on a subclass of countable or finite graphs 
which are {\bf Alexandroff spaces} in the sense that arbitrary intersections of open sets are open. 
The paper studies the notion of homeomorphism because it produces equivalence classes
ou graphs which are easier to distinguish from the complexity point of view. 
We here only look at finite topologies, where every topology is Alexandroff. 
We look here at the concept of contractibility
for a subbase and the concept of dimension. Contractibiliy is essential for us because we aimed
to have Euler characteristic, homotopy structures and cohomology invariant under homeomorphisms. 
That dimension is essential is because notions like connectivity, 
fundamental group, topology of the boundary do in an essential way depend on dimension: 
the boundary of a three two-dimensional ball has a different topology than the boundary 
of a three-dimensional ball. This is especially true in geometric situations which are 
important in applications like computer graphics. We also have seen that it is possible 
conceptually even in the discrete to single out Euclidean structures among metric 
spaces by using contractibility and dimension. \\

\begin{figure}[H]
\scalebox{0.14}{\includegraphics{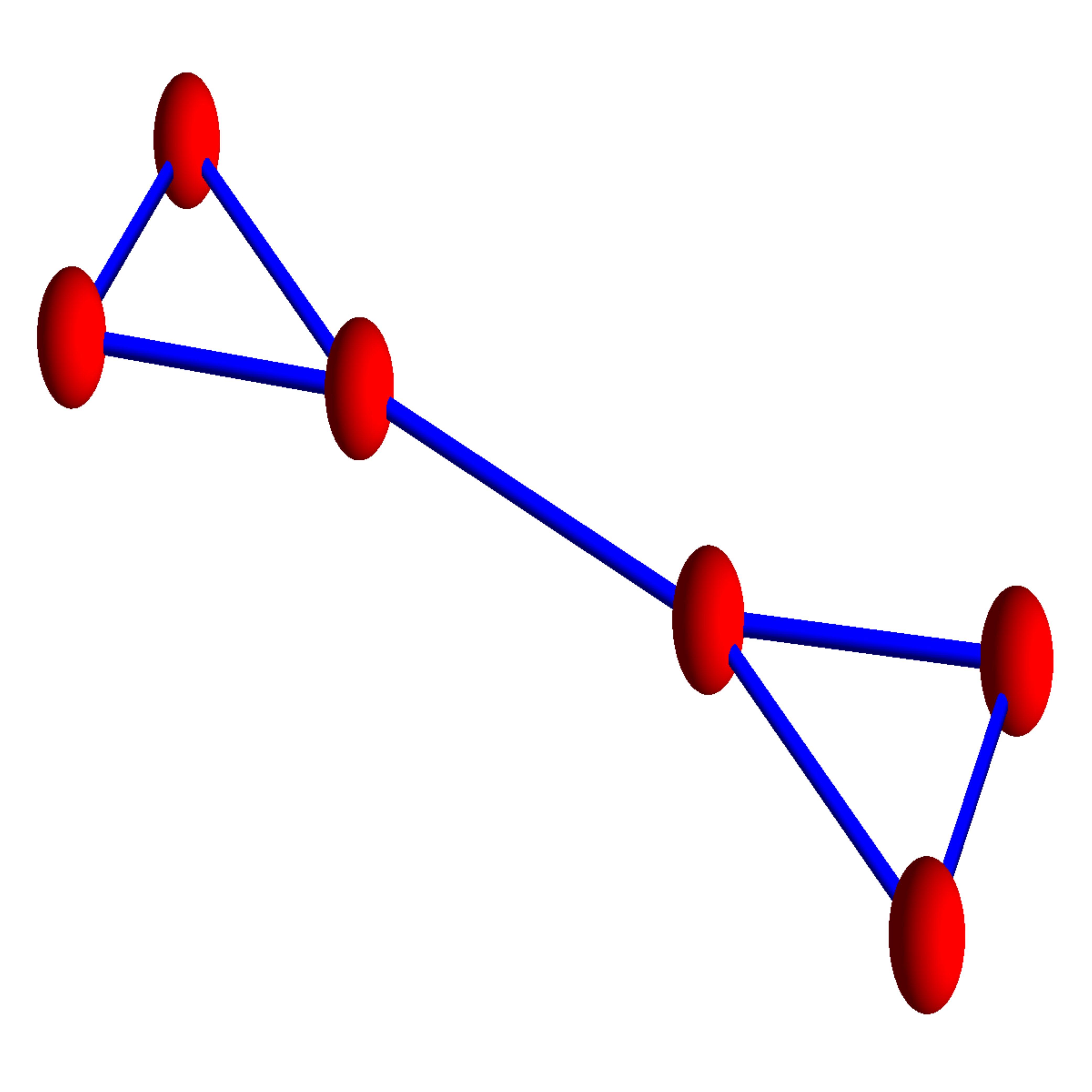}}
\scalebox{0.14}{\includegraphics{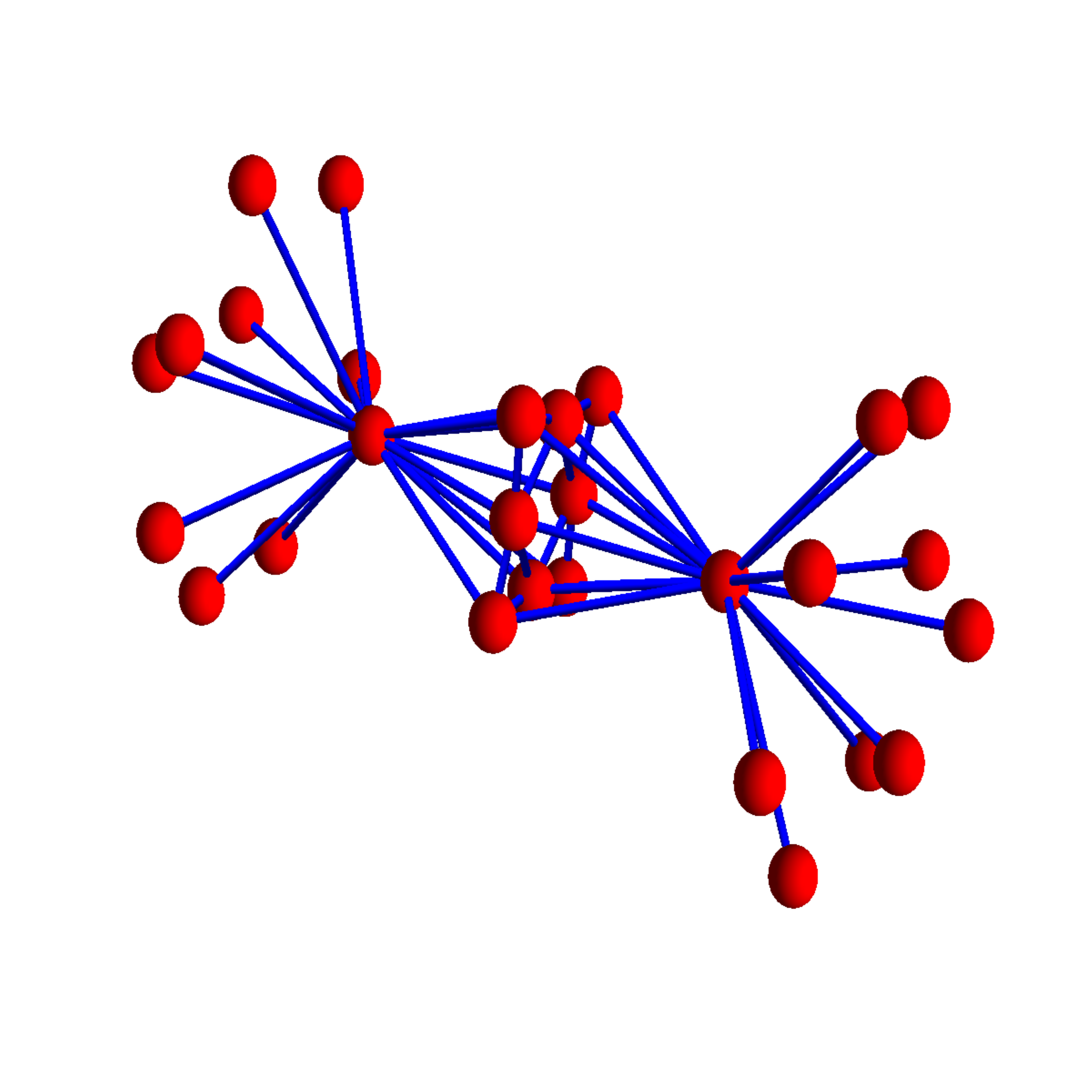}}
\caption{
Two examples, for which the unit ball topology is not a graph topology.
In the left case, $\G$ consists of two separate triangles because the dimension assumption
does not connect $B(x)$ with $B(y)$. In the right case, $\G$ is contractible while $G$ is not. 
This is a case, where the unit balls $B(x),B(y)$ have dimension smaller than $2$ while
the intersection $B(x) \cap B(y)$ has dimension $2$. This forces us to connect $B(x)$
with $B(y)$ in $\G$. Of course there are natural and optimal topologies in both cases: 
in the left case take the two triangles connected by a line graph of length $4$.
In the right case, cover and connect each hair using $1$-dimensional line graphs.
\label{ball}
}
\end{figure}

{\bf 14)} The intersection of two contractible adjacent balls is contractible but the intersection of
two balls of distance $2$ in general does not have this property as the case of 
the cyclic graph $C_4$ shows which has two unit balls intersecting in a disconnected graph. 
In the following, we mean with $\B$ a minimal set of unit balls, discarding multiple copies of
the same graph. For the triangle for example, $\B$ contains only one set, the triangle itself.
Do unit balls $\B$ form a topology? It is often the case. But
two triangles joined by a vertex show a graph $G$ for which the set of unit 
balls generates a nerve graph which is not homotopic to $G$. 
The case of two tetrahedra joined along a triangle shows an example there the intersection
has dimension $2$ while $B(x),B(y)$ have dimension $3$. Add one-dimensional hairs at the
vertices which are not in the intersection can now render the dimension $B(x),B(y)$ arbitrarily close
to $1$, while the dimension of the intersection remains $2$. This shows that the two balls of distance
$2$ must be connected in the nerve graph 
if the dimension of the two balls $B(x)$ and $B(y)$ are both smaller or equal to $1$.  See Figure~(\ref{ball}). 
Lets assume that graph has the property that the dimension assumption is 
true for any adjacent unit balls and false for any balls of distance $2$ form a topology. Then this defines
a graph topology: {\bf Proof:} {\it (ii) Every unit ball $B(x)$ is contractible unconditionally.} 
Use induction with respect to the order $n$ of $B(x)$. We can assume $G=B(x)$.
Take a point $z$ in $S(x)$ and remove it with all connections producing a new graph $H$.
This is a homotopy step $G \to H$ because $H$ is the new $B(x)$.
{\it (iii) If $(x,y) \in E$, then $B(x) \cap B(y)$ is contractible.}
Use induction with respect to the order $n$ of the graph $H=B(x) \cap B(y)$ which
we can assume to be $G$. If $n=3$, then $H$ is a 
triangle $x,y,z$, which is contractible. Assume it has been shown for all graphs 
order $n$. Consider the case $n+1$ and chose a vertex $z$ in $G$ different from $x,y$.
It is connected to a subgraph $H$ of order $n$ containing $x$ and $y$. The graph $G$
with $z$ removed is of the form $B(x) \cap B(y)$ in $H$. By induction assumption, $H$
is contractible so that $G$ as a homotopy extension is contractible too.
(More generally, any finite intersection of adjacent unit balls is contractible:
If $x_1, \dots, x_k$ are the centers of the balls, go through the same proof
showing that $B(x_1) \cap B(x_2)$ is contractible within $H= G \cap \bigcap_{j=3}^k  B(x_j)$.) 
{\it (iv) Two unit balls $B(x),B(y)$ with $d(x,y)=3$ do not intersect.} 
By the triangle inequality.
{\it (v) The graph $G$ is homotopic to $\G$.} 
The nerve graph $\G$ is the same as the graph $G$ and the homotopy 
assumption is satisfied automatically. \\

{\bf 15)} Here is a question we can not answer yet: is it true that if a graph $H$ is planar and equipped with 
an (optimal) topology and $H$ is homeomorphic to $G$ which is also equipped with an (optimal) topology, then $G$ is planar?  
By the Kuratowski theorem, non-planarity is equivalent to have no subgraph which is 1-homeomorphic to $K_5$ nor $K_{3,3}$. 
Lets for example look at a graph which contains $K_5$, then there exists an open set which has dimension at least $5$.
The image of this open set must have dimension at least $5$ too and therefore contain a copy of $K_5$. 
Now lets look at a graph which contains a 1-homeomorphic copy of $K_5$. While it has smaller dimension, we need more
open sets to cover it because it is no more contractible. In the case when all edges are extended, we need at least
10 open sets to cover it. The image of this produces a 1-homeomorphic graph. This still does not cover all the possibilities
yet for $K_5$ and then we also have to deal with the utility graph $K_{3,3}$. 
While $K_{3,3}$ is one-dimensional we can not take the indiscrete topology because $\chi(K_{3,3}) = -3$ shows that $K_{3,3}$ is not
contractible and a topology needs more open sets. The graph $K_5$ with the indiscrete topology requires the image 
graph to have a $K_5$ subgraph. 

\vspace{12pt}
\bibliographystyle{plain}

\end{document}